\documentclass{amsart}
\usepackage{amsthm}
\usepackage{amsfonts}
\usepackage{mathrsfs}
\usepackage{amssymb} 
\usepackage{amsmath}
\usepackage{dsfont}
\usepackage{enumitem}
\usepackage{indentfirst}
\usepackage{graphicx}
\graphicspath{ {./} }
\usepackage{caption}
\usepackage{subcaption}
\usepackage{wrapfig}
\usepackage{setspace}
\usepackage{mathtools}
\usepackage[dvipsnames]{xcolor}
\usepackage{tikz}
\usepackage{tikz-cd}
\usetikzlibrary{braids}
\usetikzlibrary{shapes,arrows}
\usepackage{algorithm}
\usepackage{algpseudocode}
\usepackage{hyperref}


\author{Daniel Hothem}
\title{Extending Higher Bruhat Orders to non-longest words in $S_n$}

\newtheorem{theorem}{Theorem}
\numberwithin{theorem}{subsection}
\newtheorem{lemma}[theorem]{Lemma}

\theoremstyle{definition}
\newtheorem{example}[theorem]{Example}
\newtheorem{corollary}[theorem]{Corollary}
\newtheorem{definition}[theorem]{Definition}


\theoremstyle{remark}
\newtheorem{rmk}[theorem]{Remark}
\newtheorem{notation}[theorem]{Notation}

\let\emph\relax
\DeclareTextFontCommand{\emph}{\bfseries}
\DeclareMathOperator{\lex}{lex}
\DeclareMathOperator{\anti}{anti}
\DeclareMathOperator{\MS}{MS}
\DeclareMathOperator{\SSO}{SS}

\newcommand{\Inv}{\text{Inv}}

\newcommand{\rlex}{\rho_{\lex}}
\newcommand{\ranti}{\rho_{\anti}}
\newcommand{\sdiff}[1]{\setminus\lbrace{#1}\rbrace}
\newcommand{\flipup}[3]{#1 \nearrow_{#3} #2}
\newcommand{\paths}[3]{A_{#3}{(#1 \rightarrow #2)}}
\newcommand{\epaths}[3]{B_{#3}{(#1 \rightarrow #2)}}

\newcommand{\HB}[2]{B_{#1}(#2)}
\newcommand{\extOrd}[2]{\mathcal{O}^{#1}_{#2}}

\setcounter{tocdepth}{1}

\begin{document}
\begin{abstract}
	In this paper, we extend Manin and Schechtman's higher Bruhat orders for $S_n$ to higher Bruhat orders for non-longest words $w$ in $S_n$. We prove that the higher Bruhat orders of non-longest words are ranked posets with unique minimal and maximal elements. As in Manin and Schechtman's original paper, the $k$-th Bruhat order for $w$ is created out of equivalence classes of maximal chains in its $(k-1)$-st Bruhat order. We also define the second and third Bruhat orders for arbitrary realizable $k$-sets, and prove that the second Bruhat order has a unique minimal and maximal element. Lastly, we also outline how this extension may guide future research into developing higher Bruhat orders for affine type A Weyl groups.
\end{abstract}
\maketitle
\tableofcontents	

\section{Introduction}

\subsection{Preliminaries}

The symmetric group on $n$ strands $S_n$ is ubiquitous in mathematics. Here we view it as a Coxeter group in the standard way. The weak (left) Bruhat order is a convenient tool for studying $S_n$; one with applications in combinatorics as well as representation theory. The weak Bruhat order is defined as follows:
\[x \leq_{L} y \text{ if } x \text{ can be written as the suffix of a reduced expression for } y.\]
If we let $C(n,1) \coloneqq \{1,\ldots,n\}$, then we may identify $S_n$ with the set of total orders on $C(n,1)$ (see Definition \ref{DGH:def:Snbij}). Under this identification, the weak Bruhat order becomes a partial order on total orders. 

Meanwhile, any reduced expression $\rho$ for the longest word $w_0$ in $S_n$ can be viewed as a total order on pairs of strands. This is because each pair must cross once in $w_0$, and a reduced expression determines the order in which pairs of strands are crossed. Alternatively, $\rho$ induces a total order on
\[C(n,2) \coloneqq \{\text{size two subsets of } C(n,1)\}.\]
Manin and Schechtman discovered that there is a bijection between reduced expression for $w_0$ and \emph{admissible} total orders on $C(n,2)$ \cite[\S 2]{DGH:MS1989}. Manin and Schechtman then generalized the notion of admissibility to total orders on $C(n,k)$, the set of size $k$ subsets of the set with $n$ elements, calling them admissible $k$-orders.

By Matsumoto's theorem, any two reduced expressions are related by a sequence of braid moves. Braid moves come in two types: the \emph{commuting relation} $su = us$ whenever $m_{su} = 2$, and the \emph{Reidemeister III move} $sts = tst$ whenever $m_{st} = 3$. The analogous transformations of admissible $k$-orders are not too difficult to define combinatorically. The commuting relation correpsonds to \emph{elementary equivalences}, while the Reidemeister III move corresponds  to a \emph{packet flip}. Manin and Schechtman considered admissible $2$-orders up to elementary equivalence, which they view as the higher analog of $S_n$. They were then able to define a partial order on these equivalence classes. They called this partial order the second Bruhat order of $S_n$, and proved that it had a unique minimal and maximal element (alternately a \emph{source} and \emph{sink}). 

Manin and Schechtman went even further by defining the inversion set of a reduced expression/admissible $2$-order as a subset of $C(n,3)$, the size three subsets of $C(n,1)$. They found that maximal chains (ie paths from source to sink) in the second Bruhat order are admissible $3$-orders on $C(n,3)$. Continuing in this manner, Manin and Schechtman defined $B(n,k)$, the $k$-th Bruhat order of $S_n$, as a partial order on equivalence classes of admissible $k$-orders. 

The goal of this paper is to generalize Manin and Schechtman's results to non-longest elements. Showing that the reduced expression graph of $w$ has a source and sink is not new, see the discussion of \cite{DGH:BEDiam} below. However, studying paths from source to sink within the reduced expression graph of a non-longest word is new. So are placing a partial order on equivalence classes of these paths, proving that it has a source and sink, and iterating the procedure. Unlike in Manin and Schechtman's original paper, identifying the source and sink is subtle, see Theorem \ref{DGH:thm:DGHBruhat} for the answer.  

\subsection{Applications and other generalizations of higher Bruhat Orders}

Several uses of these higher Bruhat orders have been found. An extension of the second Bruhat order to non-reduced expressions was used by Ben Elias in \cite{DGH:BEDiam} to prove a Bergman diamond lemma for algebras with a presentation similiar to the Coxeter presentation of $S_n$, such as the (affine) Hecke algebra in type $A$ or Khovanov-Lauda-Rouquier algebras. Within the paper, the existence of a unique sink in $B(n,2)$ is used to prove that various ambiguities in words can be resolved in a sensible manner. The result is a convienent test for when certain monoidal categories defined by generators and relations are non-zero.

Another application of $B(n,2)$ comes from Elias' development of a thicker Soergel calculus \cite[\S1.3]{DGH:BESoerg}. Let $R$ be a polynomial ring in $n$ variables. For any simple reflection $s$ there is an $(R,R)$-bimodule $B_s$. To an expression expression $s_1\cdots s_l$ we can associate a tensor products $B_{s_1}\otimes\cdots\otimes B_{s_l}$, called a \emph{Bott-Samuelson bimodule}. The Bott-Samuelson bimodule of a reduced expression for $w \in S_n$ has a special indecomposable direct summand $B_{w}$. These are the indecomposable \emph{Soergel bimodules}. In general, computing the projection map from the Bott-Samuelson bimodule to the Soergel bimodule is quite difficult. However, it is possible to construct this idempotent for the reduced expressions of the longest element $w_0$ in $S_n$ (or in any parabolic subgroup).

When computing these idempotents, the commutation relations in the Coxeter presentation of $S_n$ correspond to isomorphisms between Bott-Samuelson bimodules. These isomorphisms form a compatible system, so one can associate a single Bott-Samuelson bimodule to an equivalence class in $B(n,2)$. The $m_{st} = 3$ braid relation corresponding to a Reidemeister III move corresponds to a morphism between Bott-Samuelsons which projects to a common summand. Because the Reidemeister III moves correspond to projections and not isomorphisms, it one cannot apply them indiscriminately. Elias proves that, by applying them in specific orders, one can obtain the projection to $B_w$. One such order comes from the second Bruhat order. 

The idempotents computed in \cite{DGH:BESoerg} are important when diagrammatically categorifying the Hecke algebra of type A. It is conjectured that higher Bruhat orders ($k > 2$) will play a role in higher categorifications of the Hecke algebra. 

\begin{rmk}
	It is an open question whether or not one can use similar methods to construct idempotents for Bott-Samuelson bimodules associated to reduced expressions of \textit{non-longest words} in $S_n$. These idempotents would no longer project to $B_{w}$, but the bimodules that they do project to could still be of interest. In particular, they would not depend upon the characteristic of the base field, whereas $B_w$ does. Such bimodules were first studied by Libedinsky, who explored them in extra-large Coxeter type \cite{DGH:LIB}.
\end{rmk}

There are also partial extensions of thicker Soergel calculus in type B. Koley \cite{DGH:Koley} extended Elias' thicker Soergel calculus to the Weyl group $B_3$. Likewise, there exist partial generalizations of higher Bruhat orders to type $B$ Weyl groups. Work by Shelley-Abrahamson and Vijaykumar \cite{DGH:SH2016} defined the second and third Bruhat orders for $B_n$. Generalizing to higher $k$ is difficult.

\begin{rmk}
	The author wrote code to compute higher Bruhat orders in Type A and Type B. Pseudocode for the type A algorithms is found in the appendix \ref{DGH:Sec:App}. The type B algorithms are very similar. In particular, it becomes extremely computationally intensive to produce type B examples as $B_n$ grows faster than $S_n$. This makes it difficult to experimentally determine what the correct partial order on total orders of the analog of $C(n,k)$ should be. Without an outside theory to guide exploration, we were unable to make progress on type B higher Bruhat orders.
\end{rmk} 

Another possibly fruitful avenue of research is extending higher Bruhat orders to affine type A Weyl groups. The most obvious difference between finite and affine type A is that affine Weyl groups lack longest elements. This makes it impossible to talk about maximal chains in the weak Bruhat graph of affine Type A. Instead, higher Bruhat orders must be defined for individual words. This has parallels to defining higher Bruhat orders for non-longest words in $S_n$, which is the topic of this paper. We explore the first steps towards an affine generalization in \S\ref{DGH:sec:Affine}.

\subsection{Paper overview}

The paper is divided into four main sections. After the introduction, Section \ref{DGH:Sec:One} provides a relatively in-depth exploration of Manin and Schechtman's original ideas. We omit any proof of Manin and Schechtman's main result (Theorem \ref{DGH:thm:MSthm}) and instead focus on examples and definitions of key concepts. Special attention is paid to Ziegler's isomorphism between higher Bruhat orders and the \emph{single-step inclusion order on realizable sets}(Theorem \ref{DGH:thm:Zieg}). This isomorphism establishes a correspondence between equivalence classes of admissible $k$-orders and realizable $(k+1)$-sets. Realizable sets and their complement obey nice convexity requirements and arise as inversion sets of admissible $k$-orders. See \cite{DGH:MS1989} and \cite{DGH:Z1993} for Manin and Schechtman, and Ziegler's original results, respectively.

Section \ref{DGH:Sec:Adm} lays the groundwork for defining higher Bruhat orders for non-longest words. In it, the necessary definitions and concepts from Ziegler as well as Manin and Schechtman's papers are extended to arbitrary realizable sets. The main technical result in the section is a bijection between paths from the empty set to a realizable $k$-set $J$ in a higher Bruhat graph with admissible orders on $J$. Ziegler's mapping between admissible $k$-orders and realizable $(k+1)$-sets is generalized to a map between admissible orders on $J$ and realizable $(k+1)$-sets.  The section concludes by defining the second Bruhat order for $J$ as the poset of equivalence classes of admissible orders on $J$. This is similar to how Manin and Schechtman define higher Bruhat orders for $S_n$ as posets of equivalence classes of admissible orders on $C(n,k)$. We also prove that the second Bruhat order for $J$ has a unique minimal and maximal element.

Section \ref{DGH:Sec:GenBruhat} opens by showing how the third Bruhat order of $J$ can be constructed out of maximal chains in the second Bruhat order of $J$. In order to do so, we exploit the fact that the third Bruhat order of $J$ can be embedded into the second Bruhat order of $S_n$. This is accomplished by extending maximal chains in the second Bruhat order of $J$ to certain chains in $B(n,2)$. Using special properties of the minimal and maximal elements of the second Bruhat order of $J$, we show that the resulting embedding is independent of the choice of extension. As one can tell, this is fairly technical. For clearer details refer to Section \ref{DGH:ssec:BPaths}.  

The remainder of Section \ref{DGH:Sec:GenBruhat} is devoted to proving Theorem \ref{DGH:thm:DGHBruhat}, our main result. Exploiting the correspondence between realizable $2$-sets and words $w$ in $S_n$ (this comes from taking the inversion set of $w$), higher Bruhat orders for non-longest words are defined. The $i$-th Bruhat order for $w$ is defined as a poset of equivalence classes of paths within $B(n,i-1)$ between realizable $i$-sets $L^i$ and $M^i$. The definitions of $L^i$ and $M^i$ depend on $w$ and are somewhat subtle (see Section \ref{DGH:ssec:BPaths} for details). In particular, they correspond to the inversion sets of the minimal and maximal elements of the $(i-1)$ Bruhat order of $J$. Defining them this way ensures that the $i$-th Bruhat order of $J$ can be created out of maximal chains in the $(i-1)$ Bruhat order, just as $B(n,i)$ is created out of maximal chains in $B(n,i-1)$.

The most difficult part of proving Theorem \ref{DGH:thm:DGHBruhat} is proving that $M^i$ is realizable and maximal. Both results are technical and crucially depend upon $J$ being a realizable $2$-set. The section concludes with a counterexample demonstrating that the techniques used each proof do not apply when $J$ is an arbitrary realizable $k$-set.

The main portion of the paper ends with preliminary forays into generalizing higher Bruhat orders to affine type A. It is followed by an appendix containing code for generating higher Bruhat orders.

\subsection{Acknowledgements}

I would like to acknowledge my advisor Ben Elias for all of his help throughout my graduate education. He suggested the topic for this paper and proved invaluable in helping me through many of the big ideas, as well as throughout the editing process. I also received summer support from Elias' NSF grant, DMS-1553032.

\section{The Theorem of Manin and Schechtman} \label{DGH:Sec:One}

This section provides an overview of Manin and Schechtman's construction of higher Bruhat orders as well as their main theorem \cite{DGH:MS1989}. We also cite Ziegler's work establishing the equivalence between Manin and Schechtman's higher Bruhat orders with Ziegler's own single step inclusion order \cite{DGH:Z1993}. Ultimately, we will work primarily with Ziegler's single step inclusion order as it is more amenable to direct analysis. The first half of this exposition closely follows the paper by Shelley-Abrahamson and Vijaykumar \cite{DGH:SH2016}. 

\subsection{Packets and admissible orders}

Given a finite set $X$ we define 
\[
C(X,k) \coloneqq \{x \subseteq X \mid \vert x \vert = k\}
\]
as the collection of all $k$-element subsets of $X$. The elements of $C(X,k)$ are called the \emph{$k$-sets of $X$}. Throughout the paper we will frequently work with $I_n = \{1,2,\dots,n\}$; the set of n distinct elements, totally ordered in the usual way. For convenience and in keeping with the literature, $C(n,k) \coloneqq C(I_n,k)$.

\begin{definition}\label{DGH:def:packet}
	For each $X \in C(n,k+1)$ we define the \emph{packet of X}, $P_X$, to be the collection of $k$-sets in $X$. In other words, $P_X = C(X,k)$. Any subset of $C(n,k)$ of the form $P_X$ for some $X \in C(n,k+1)$ is called a \emph{$k$-packet}. 
\end{definition}

\begin{example}[$2$-packet]\label{DGH:ex:2pack} Let $X = \{1,2,3\}$. Then $P_X$ is
	\begin{equation}
		P_X = \{\{1,2\}, \{1,3\}, \{2,3\}\}.
	\end{equation}
\end{example}

\begin{lemma}\label{DGH:lem:SharePacket} Let $J$ and $K$ be two $k$-sets. If $\vert J \cap K\vert = k-1$, then $J$ and $K$ are in exactly one $k$-packet together, namely $P_{J\cup K}$. Moreover, $P_J \cap P_K$ contains one $(k-1)$-set, $J \cap K$. If $\vert J\cap K\vert < k-1$, then $J$ are $K$ are not in any shared $k$-packet. Additionally, $P_J \cap P_K$ is empty. 
\end{lemma}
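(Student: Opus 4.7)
The plan is to dispatch both halves of the lemma by direct set-theoretic bookkeeping, using only the definition $P_Y = C(Y, |Y|-1)$ and inclusion-exclusion on $|J \cup K|$. There is no serious obstacle; the main thing to keep straight is that $P_J$ and $P_K$ are collections of $(k-1)$-subsets (since $J, K$ themselves have size $k$), while "being in a shared $k$-packet" refers to a common $(k+1)$-set containing both $J$ and $K$ as subsets.

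First I would treat the case $|J \cap K| = k-1$. By inclusion-exclusion, $|J \cup K| = 2k - (k-1) = k+1$, so $J \cup K$ is a $(k+1)$-set and both $J$ and $K$ lie in $P_{J \cup K}$. For uniqueness, if $L \in C(n, k+1)$ satisfies $J, K \in P_L$, then $J \cup K \subseteq L$, and comparing cardinalities forces $L = J \cup K$. For the statement about $P_J \cap P_K$, an element $S$ of this intersection is a $(k-1)$-subset of both $J$ and $K$, hence $S \subseteq J \cap K$; since $|J \cap K| = k-1 = |S|$, we must have $S = J \cap K$, and conversely $J \cap K$ is plainly a $(k-1)$-subset of both $J$ and $K$.

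Next I would handle the case $|J \cap K| < k-1$. If there were an $L \in C(n,k+1)$ with $J, K \in P_L$, then $J \cup K \subseteq L$, but $|J \cup K| = 2k - |J \cap K| > 2k - (k-1) = k+1 = |L|$, a contradiction; so no shared $k$-packet exists. The same size argument gives $P_J \cap P_K = \emptyset$, since any common element would be a $(k-1)$-subset of $J \cap K$, and $J \cap K$ has fewer than $k-1$ elements.

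Since each step is a one-line cardinality comparison, the "hard part," insofar as there is one, is merely to state carefully which object lives in $C(n, k-1)$, $C(n,k)$, or $C(n, k+1)$, so that the reader sees immediately that the two assertions in each case are complementary set-theoretic statements about the same intersection $J \cap K$.
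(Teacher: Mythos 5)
Your proof is correct, and it is exactly the direct cardinality/inclusion-exclusion bookkeeping the paper has in mind: the paper's own ``proof'' of this lemma is simply ``The proof is evident and left to the reader,'' so your write-up is a faithful filling-in of the intended argument. Both cases are handled cleanly and the distinction between living in $C(n,k-1)$ versus $C(n,k+1)$ is kept straight throughout.
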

\begin{proof}
	The proof is evident and left to the reader. 
\end{proof}

\begin{notation}
	Given the frequency with which we work with sets of sets in this paper, it is convenient to drop some of traditional trappings of set notation. If $X = \{x_1,\ldots,x_{k}\}$, we will use $x_i$ and $i$ interchangeably. When the meaning is clear, we will also dispense with braces and commas; writing $\{x_1,\ldots x_{k}\}$ as $\{x_1\ldots x_{k}\}$ or $x_1\ldots x_{k}$. This notation is often used when writing down $P_X$. For example, both $\{1234\}$ and $1234$ represent $\{1,2,3,4\}$, and $P_{\{1,2,3,4\}}$ may be expressed as $P_{\{1234\}}$ or $P_{1234}$.
	
	We also use special notation for the elements of $P_X$. Each element of $P_X$ is of the form $X\sdiff{x_i}$ for $1 \leq i \leq k$. The notation $\widehat{x}_i$ and $\widehat{i}$ will be used interchangeably for $X\sdiff{x_i}$. The notation $\widehat{i}\widehat{j}$ denotes $X\sdiff{x_i,x_j}$, etc. 
\end{notation}

Because $I_n$ is totally ordered, $C(n,k)$ has a lexicographical total order, denoted $\rlex$, and an antilexicographical total order, denoted $\ranti$. Any $k$-packet inherits these total orders by restriction.

\begin{example}\label{DGH:ex:lexord} The packet $P_{1234} = \{123, 124, 134, 234\}$ is ordered lexicographically and antilexicographically as follows
	\begin{align*}
		\rho_{\lex}: & 123 < 124 < 134 < 234, \\
		\rho_{\anti}: & 234 < 134 < 124 < 123.
	\end{align*}
\end{example}

The remainder of this paper is primarily devoted to studying certain total orders on $C(n,k)$ and comparing them to the lexicographic order. Total orders on $C(n,1)$ can be identified with the symmetric group $S_n$. By defining an appropriate ``distance'' between an order and the lexicographic order (see Definition \ref{DGH:def:InvSets}) one can recover the length function and weak left Bruhat order on $S_n$. In order to accomplish this, we must impose an additional condition on our total orders which is trivial when $k = 1$. This condition is called \emph{admissibility}. 

\begin{definition}\label{DGH:def:adm}
A total order $\rho$ on $C(n,k)$ is called a \emph{$k$-order}. A $k$-order is \emph{admissible} (alt. admissible $k$-order) if $\rho$ induces either the lexicographical or antilexicographical ordering on every $k$-packet in $C(n,k)$.

We use $A(n,k)$ to denote the set of all admissible $k$-orders on $C(n,k)$.
\end{definition}

\begin{example}\label{DGH:ex:adm} Here we provide examples of admissible and non-admissible 2-orders on $C(4,2)$. We also list the induced orders on each two-packet.
	\begin{center}
		\begin{tabular}{c|c|c} 
			\multicolumn{1}{c}{}& \multicolumn{1}{c}{Admissible}& \multicolumn{1}{c}{Not Admissible} \\
			Order on $C(4,2)$ & $ 23 < 13 < 24 < 14 < 12 < 34 $& $12 < 13 < 34 < 14 < 24 < 23$ \\
			\hline
			Order on $\{123\}$ & $23 < 13 < 12 $ & $12 < 13 < 23$\\ 
			Order on $\{124\}$ & $24 < 14 < 12$ & $12 < 14 < 24$ \\
			Order on $\{134\}$ & $13 < 14 < 34$ & $13 < 34 < 14$ \\
			Order on $\{234\}$ & $23 < 24 < 34$ & $34 < 24 < 23$
		\end{tabular}
	\end{center}
	The second order fails admissibility because its restriction to $P_{134}$ is neither the lexicographic nor the antilexicographic order.
\end{example}

The set $A(n,k)$ is always non-empty as $\rlex$, the lexicographical ordering of $C(n,k)$, is always admissible. Likewise the antilexicographic order $\ranti$ is also always admissible. More generally, if $\rho$ is admissible, then its reverse $\rho^{t}$ is also admissible. 

\begin{notation}\label{DGH:not:rhot}
	Given an admissible $k$-order we denote the reverse order as $\rho^t$. 
\end{notation}

When $k = 1$, a total order on $1$-sets is the same as a total order on $I_n$. In this case, admissibility is trivial as a $1$-packet can only be ordered lexicographically or antilexicographically. Hence, $A(n,1)$ corresponds to the set of total orderings of $I_n$. Therefore $A(n,1)$ is also in bijection with the symmetric group $S_n$, where $\rlex$ corresponds to the identity element and $\ranti$ to $w_0$, the longest word in $S_n$. In this sense, admissible $k$-orders generalize the notion of a permutation in a more restrictive way than just considering total orders on $C(n,k)$. 

Because there are multiple natural bijections between $S_n$ and $A(n,1)$, we will fix a particular bijection for the rest of this paper. 

\begin{definition}\label{DGH:def:Snbij}
	Let $\iota: S_n \rightarrow A(n,1)$ send a permutation $w \in S_n$ to the total order on $I_n$ defined by 
	\begin{equation}\label{DGH:eqn:Snbij}
		\iota(w): w^{-1}(1) < w^{-1}(2) < \ldots < w^{-1}(n-1) < w^{-1}(n)
	\end{equation}
\end{definition}

If admissible $k$-orders are meant to generalize permutations, then we should be able to generalize the length function of $S_n$. 

\begin{definition}\label{DGH:def:inv}
	Take $\rho \in A(n,k)$. The \emph{inversion set of } $\boldsymbol{\rho}$ is defined as
\begin{equation}\label{DGH:def:InvSets}
	\Inv(\rho) \coloneqq \{X \in C(n,k+1) \mid \rho\vert_{ P_X} = \ranti\vert_{P_X}\}.
\end{equation}
We may view $\Inv$ as a function $\Inv:A(n,k) \rightarrow 2^{C(n,k+1)}$, where $2^{C(n,k+1)}$ is the power set of $C(n,k+1)$. The \emph{length of $\rho$} is defined as $l(\rho) \coloneqq \vert\Inv(\rho)\vert$.	
\end{definition}

\begin{rmk}\label{DGH:rmk:InvAgree}
	The inversion set of a permutation $w \in S_n$ is defined as 
	\begin{equation}
		\Inv(w) \coloneqq \{(x,y) \in I_n \vert x < y \text{ and} w(x) < w(y)\}
	\end{equation}
	The length function of $w$ can then be defined as $l(w) = \vert \Inv(w)\vert$. It is worth checking that $\Inv(w) = \Inv(\iota(w))$ where $\iota: S_n \rightarrow A(n,1)$ is the bijection defined in Equation \eqref{DGH:eqn:Snbij}
\end{rmk}

\begin{example}{(Inversion sets of $S_n$)}\label{DGH:ex:SnInv} Let $S_3 = \langle s = (12), t = (23) \mid s^2 = t^2 = 1, sts = tst\rangle$. Under $\iota$,  
	\[ts \mapsto \ranti: 2 < 3 < 1\]
	Here $\Inv(ts) = \{12, 13\}$. Furthermore, $l(ts) = 2$. 
\end{example}

\begin{example}\label{DGH:ex:Inv} The inversion set for the admissible 2-order in Example \ref{DGH:ex:adm} is 
	\[\Inv(\rho) = \{123, 124\}\]
\end{example}

\begin{lemma}\label{DGH:lem:InvRev} Let $\rho \in A(n,k)$, then $\Inv(\rho^t) = \Inv(\rho)^c$. Here $(\cdot)^c$ denotes the complement of a set. 
\end{lemma}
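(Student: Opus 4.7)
The plan is to unpack the definitions and use admissibility to get a dichotomy on each packet. For any $k$-order $\rho$ and any $X \in C(n,k+1)$, observe that restriction to the packet commutes with reversal: $(\rho^t)\vert_{P_X} = (\rho\vert_{P_X})^t$. Moreover, on the finite totally ordered set $P_X$, the reverse of the lexicographic order is precisely the antilexicographic order, i.e.\ $(\rlex\vert_{P_X})^t = \ranti\vert_{P_X}$ and vice versa.

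Now fix $\rho \in A(n,k)$ and $X \in C(n,k+1)$. By admissibility of $\rho$, the restriction $\rho\vert_{P_X}$ is either $\rlex\vert_{P_X}$ or $\ranti\vert_{P_X}$, and these two orders are distinct (since $\vert P_X\vert = k+1 \geq 2$). In particular, $\rho^t$ is also admissible: its restriction to $P_X$ is the reverse of one of these two orders, hence the other one. So the first step is to record that $\rho^t \in A(n,k)$ and to carry out the two-line computation above identifying the reverse of lex with antilex on a packet.

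The second and final step is the biconditional chain
\begin{align*}
X \in \Inv(\rho^t)
&\iff (\rho^t)\vert_{P_X} = \ranti\vert_{P_X} \\
&\iff (\rho\vert_{P_X})^t = \ranti\vert_{P_X} \\
&\iff \rho\vert_{P_X} = \rlex\vert_{P_X} \\
&\iff \rho\vert_{P_X} \neq \ranti\vert_{P_X} \\
&\iff X \notin \Inv(\rho),
\end{align*}
where the penultimate step uses the dichotomy from admissibility. Since $\Inv$ is viewed as a subset of $C(n,k+1)$, this gives $\Inv(\rho^t) = \Inv(\rho)^c$.

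There is no real obstacle here; the only point that needs a moment of care is making sure admissibility is actually invoked to convert ``$\rho\vert_{P_X}$ is not antilex'' into ``$\rho\vert_{P_X}$ is lex,'' since for a general (non-admissible) $k$-order the two conditions would not be negations of each other.
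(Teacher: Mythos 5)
Your proof is correct and follows essentially the same route as the paper: the paper's proof is a two-sentence observation that $(\cdot)^t$ reverses the order on every packet, swapping lex and antilex, which is exactly the dichotomy you spell out. You merely make explicit the point about admissibility being needed to upgrade ``not antilex'' to ``lex,'' which is a fair observation but does not change the argument.
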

\begin{proof}
	The involution $(\cdot)^t$ reverses the order on every $k$-packet. If $\rho$ induced the lexicographic order on $P_x$, then $\rho^t$ induces the antilexicographic order on $P_x$ and vice versa.
\end{proof}

Just as the length function on $S_n$ captures how far away an element $w \in S_n$ is from the identity element in the weak left Bruhat order, $\vert\Inv(\cdot)\vert$ will capture how far an admissible $k$-order is from $\rlex$ in a more general version of the weak left Bruhat order. Formalizing this notion requires a bit more work.

\subsection{Reduced expressions and admissible $2$-orders}

One of the more interesting consequences of Manin and Schechtman's theorem is that admissible $2$-orders on $C(n,2)$ are in bijection with the reduced expressions for the longest word $w_{0} \in S_n$. We will delay the proof of this fact until after Manin and Schechtman's theorem has been stated. In the meantime, understanding this correspondence when $n = 4$ will help motivate many of the subsequent definitions.

Recall the Coxeter presentation of $S_4$ 
\begin{equation*}\label{DGH:eq:S4Cox}
	S_4 = \langle s, t, u \mid s^2 = t^2 = u^2 = 1, su = us, sts = tst, utu = tut \rangle,
\end{equation*}
where $s = (12)$, $t = (23)$, $u = (34)$ are the simple transpositions swapping adjacent elements in a list. The longest word $w_0$ maps $1 \mapsto 4$, $2 \mapsto 3$, $3 \mapsto 2$, $4 \mapsto 1$. In order for this to occur, $w_0$ must invert every pair $i < j$. Hence $\Inv(w_0) = C(4,2)$. Each reduced expression for $w_0$ fixes the order in which these inversions occur. Put another way, each reduced expression places an order on $C(4,2)$ based on when the $1$-packet $\{ij\}$ is flipped. A picture is worth a thousand words.

\begin{example}\label{DGH:ex:stutst}
	Consider the string diagram for the reduced expression $w_0 = stutst$ (see Figure \ref{DGH:fig:stutst}).
	\begin{figure}
		\begin{tikzpicture}
			\pic[braid/floor 1/.style={fill=white},
			braid/floor 2/.style={fill=gray!50},
			braid/floor 3/.style={fill=white},
			braid/floor 4/.style={fill=gray!50},
			braid/floor 5/.style={fill=white},
			braid/floor 6/.style={fill=gray!50},
			line width = 2pt,
			braid/strand 1/.style={red},
			braid/strand 2/.style={blue},
			braid/strand 3/.style={green},
			braid/strand 4/.style={purple},
			name prefix = braid
			]{braid = { s_1  s_2  s_3  s_2  s_1  s_2}};
			\node[circle,draw] (c1) at (.5,-.75){};
			\node[circle,draw] (c2) at (1.5,-1.75){};
			\node[circle,draw] (c3) at (2.5,-2.75){};
			\node[circle,draw] (c4) at (1.5,-3.75){};
			\node[circle,draw] (c5) at (.5,-4.75){};
			\node[circle,draw] (c6) at (1.5,-5.75){};
			\node[] at (3.5,-.75){\{34\}};
			\node[] at (3.5,-1.25){$\vee$};
			\node[] at (3.5,-1.75){\{24\}};
			\node[] at (3.5,-2.25){$\vee$};
			\node[] at (3.5,-2.75){\{14\}};
			\node[] at (3.5,-3.25){$\vee$};
			\node[] at (3.5,-3.75){\{12\}};
			\node[] at (3.5,-4.25){$\vee$};
			\node[] at (3.5,-4.75){\{13\}};
			\node[] at (3.5,-5.25){$\vee$};
			\node[] at (3.5,-5.75){\{23\}};
			\node[] at (0,.25) {4};
			\node[] at (1,.25){3};
			\node[] at (2,.25){2};
			\node[] at (3,.25){1};
			\node[] at (0,-6.75){1};
			\node[] at (1,-6.75){2};
			\node[] at (2,-6.75){3};
			\node[] at (3,-6.75){4};
		\end{tikzpicture} 
	\caption{String diagram for $w = stutst$ used in Example \ref{DGH:ex:stutst}}
	\end{figure}
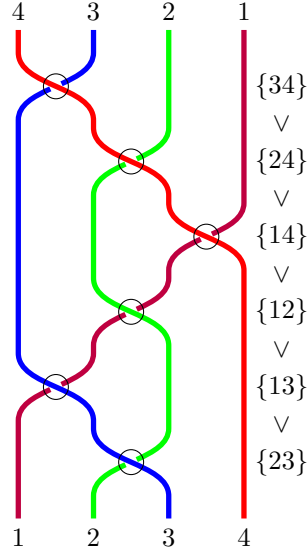\label{DGH:fig:stutst}
	Reading bottom to top, this reduced expression produces the $2$-order 
	\[\rho: \{23\} < \{13\} < \{12\} < \{14\} < \{24\} < \{34\}\]
	The reader should verify that this is indeed an admissible $2$-order.
\end{example}
In general, the $2$-order corresponding to a reduced expression will be admissible. This is because a $2$-packet generated by $\{i<j<l\}$ can be used to keep track of when the $i$, $j$, and $l$ strands crossed each other. Out of the three crossings $ij, il, jl$ only $il$ cannot occur first, because the middle strand $j$ needs to be moved out of the way. If $i$ crosses $j$ first, then $j$ cannot cross $l$ until $i$ is moved past $l$. So the order of crossings must be $ij < il < jl$. Similarly, if $j$ crosses $l$ first, then crossings must occur in the antilexicographic order.

\begin{example}\label{DGH:ex:S4Cor} Here are a few more reduced expressions for the longest word in $S_4$ along with their accompanying admissible $2$-orders and inversion sets.
		\begin{center}
		\begin{tabular}{c|c|c} 
			\multicolumn{1}{c}{Rex}& \multicolumn{1}{c}{Order}&\multicolumn{1}{c}{$\Inv(\cdot)$}\\
			\hline 
			 tstuts & $12 < 13 < 14 < 34 < 24 < 23$ & $\{234\}$ \\
			 tsutus & $12 < 34 < 14 < 13 < 24 < 23$ & $\{134, 234\}$ \\
			 tustus & $12 < 34 < 14 < 24 < 13 < 23$ & $\{134, 234\}$ \\
			 tustsu & $34 < 12 < 14 < 24 < 13 < 23$ & $\{134, 234\}$ \\
			 tutstu & $34 < 24 < 14 < 12 < 13 < 23$ & $\{124, 134, 234\}$ \\
		\end{tabular}
	\end{center}
\end{example}

\begin{rmk}
	Recall that $\Inv(\cdot)$ is an injective map on $S_n = A(n,1)$ into the powerset of $C(n,2)$. Example \ref{DGH:ex:S4Cor} shows that $\Inv(\cdot)$ is no longer injective on $A(n,2)$. Some reduced expressions generate admissible $2$-orders with the same inversion sets! In Example \ref{DGH:ex:S4Cor}, the three reduced expressions with the same inversion sets all differed by a sequence of commuting relations, such as $su = us$. This can be generalized and will motivate the definition of an \emph{elementary equivalence} in Section \ref{DGH:sub:ElemEquiv}.
	
	The reduced expressions $tstuts$ and $tsutus$ have inversion sets that differ by a single element. Additionally, $tstuts$ and $tsutus$ are related by the Reidemeister III move $tut = utu$. In general, two reduced expressions related by a Reidemeister III move will generate inversion sets which differ by a single $3$-set. This observation will motivate the definition of a \emph{packet flip}. 
\end{rmk}

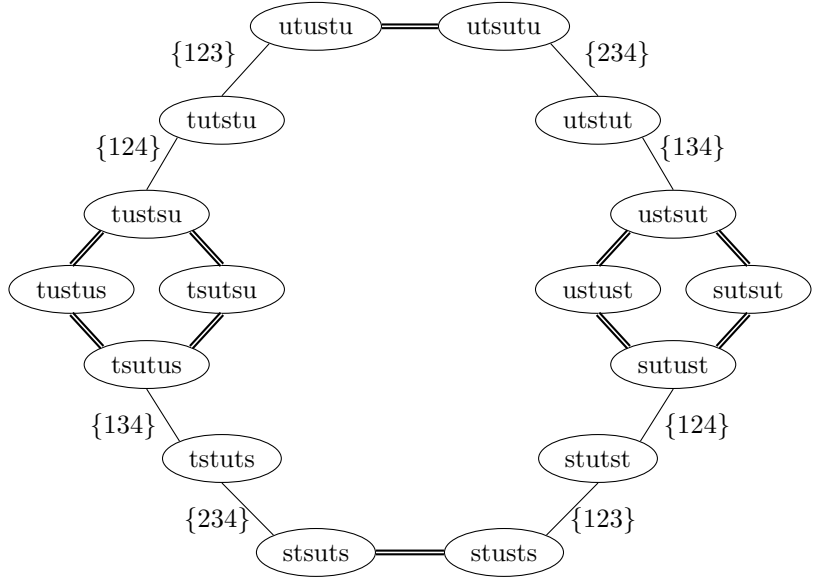
\begin{figure}
	\centering
	\begin{tikzpicture}
		\node[ellipse, draw] (6R) at (5.25,7){utsutu};
		\node[ellipse, draw] (6L) at (2.75,7){utustu};
		\node[ellipse, draw] (5R) at (6.5,5.75){utstut};
		\node[ellipse, draw] (5L) at (1.5,5.75){tutstu};
		\node[ellipse, draw] (4R) at (7.5,4.5){ustsut};
		\node[ellipse, draw] (4L) at (.5,4.5){tustsu};
		\node[ellipse, draw] (3FR) at (8.5,3.5){sutsut};
		\node[ellipse, draw] (3R) at (6.5,3.5){ustust};
		\node[ellipse, draw] (3L) at (1.5,3.5){tsutsu};
		\node[ellipse, draw] (3FL) at (-.5,3.5){tustus};
		\node[ellipse, draw] (2R) at (7.5,2.5){sutust};
		\node[ellipse, draw] (2L) at (.5,2.5){tsutus};
		\node[ellipse, draw] (1R) at (6.5,1.25){stutst};
		\node[ellipse, draw] (1L) at (1.5,1.25){tstuts};
		\node[ellipse, draw] (0L) at (2.75,0){stsuts};
		\node[ellipse, draw] (0R) at (5.25,0){stusts};
		
		\draw[double,thick] (6L.east) -- (6R.west);

		\draw[-] (6L.south west) -- (5L.north) node[above, xshift = -.25cm, yshift = .25cm]{\{123\}};
		\draw[-] (6R.south east) -- (5R.north) node[above, xshift = .25cm, yshift = .25cm]{\{234\}};
		
		\draw[-] (5L.south west) -- (4L.north) node[above, xshift = -.25cm, yshift = .25cm]{\{124\}};
		\draw[-] (5R.south east) -- (4R.north) node[above, xshift = .25cm, yshift = .25cm]{\{134\}};
		
		\draw[double, thick] (4L.south west) -- (3FL.north);
		\draw[double, thick] (4L.south east) -- (3L.north);
		\draw[double,thick] (4R.south east) -- (3FR.north);
		\draw[double,thick] (4R.south west) -- (3R.north);
		
		\draw[double, thick] (3FL.south) -- (2L.north west);
		\draw[double, thick] (3L.south) -- (2L.north east);
		\draw[double,thick] (3FR.south) -- (2R.north east);
		\draw[double,thick] (3R.south) -- (2R.north west);
		
		\draw[-] (2L.south) -- (1L.north west) node[below, xshift = -.75cm, yshift = .5cm]{\{134\}};
		\draw[-] (2R.south) -- (1R.north east) node[below, xshift = .75cm, yshift = .5cm]{\{124\}};
		
		\draw[-] (1L.south) -- (0L.north west) node[below, xshift = -.75cm, yshift = .5cm]{\{234\}};
		\draw[-] (1R.south) -- (0R.north east) node[below, xshift = .75cm, yshift = .5cm]{\{123\}};
		
		\draw[double,thick] (0L.east) -- (0R.west);
	\end{tikzpicture}
\caption{The reduced expression graph for the longest word $w_0$ in $S_4$. Each node is a different reduced expression. Two reduced expressions are linked by a double edge if they differ by a commuting relation. Reduced expressions are linked by a single edge if they differ by a Reidemeister III move. Reading bottom to top, each single edge is labeled by the $3$-set that is added to the inversion set of the admissible $2$-order for the resulting reduced expression.}
\end{figure}\label{DGH:fig:S4rex}

\subsection{Elementary equivalences and packet flips}\label{DGH:sub:ElemEquiv}

Recall from Example $\ref{DGH:ex:S4Cor}$ that $\Inv(\cdot)$ is not injective on $A(n,2)$. It will not be injective on $A(n,k)$ for any $k > 1$ unless $n$ is small. The set $A(n,k)$ is too large. In order to correct this, we define an equivalence relation on $A(n,k)$ which is analogous to the commuting relations in the Coxeter presentation of $S_n$ (like $su = us$). The function $\Inv(\cdot)$ will descend to an injective function on these equivalence classes (see Theorem \ref{DGH:thm:MSthm}(\ref{MSthm:partfour})). 

\begin{definition}\label{DGH:def:equiv}
Two $k$-orders $\rho, \rho' \in A(n,k)$ are said to be \emph{elementary equivalent} if they differ up to an interchange of two neighboring elements that are not in a shared $k$-packet.
\end{definition}

\begin{example}\label{DGH:ex:equiv} When $k = 1$, no two admissible $k$-orders are elementary equivalent to each other. This is because every pair of distinct elements $i,j \in I_n$ (viewed as sets of size 1) are in the shared $1$-packet $P_{ij}$.
	
When $k > 1$, a pair of $k$-sets $X, Y \in C(n,k)$ are found in the same $k$-packet if and only if $\vert X \cap Y\vert = k - 1$, see Lemma \ref{DGH:lem:SharePacket}. This allows elementary equivalent admissible $k$-orders to exist. For example,
	\begin{align*}
		\rho &: 12 < 13 < 14 < 23 < 24 < 34 \\
		\rho' &: 12 < 13 < 23 < 14 < 24 < 34
	\end{align*}
	are elementary equivalent.
\end{example}

\begin{definition}\label{DGH:def:rel}
Define $\sim$ to be the equivalence relation generated by these elementary equivalences. So $\rho \sim \rho'$ if and only if they can be obtained from one another by a sequence of elementary equivalences. 
\end{definition}

Elementary equivalences permute the order $\rho$ by a simple transposition, swapping two adjacent term in $\rho$. Thus a sequence of elementary equivalences can be viewed as an expression for a permutation in $S_N$, where $N$ is the size of $C(n,k)$. Of course, not all permutations in $S_N$ correspond to a sequence of elementary equivalences, as we are only allowed to swap adjacent terms which do not appear in the same packet. Regardless, we can use some of the same concepts which apply to expressions of permutations when we study elementary equivalences.

\begin{lemma}\label{DGH:lem:OverIt}
	Let $\rho$ be an admissible $k$-order. Let $N$ be the size of $C(n,k)$, and identify permutations of $C(n,k)$ with $S_N$ using the total ordering given by $\rho$. Suppose that $\rho$ and $\rho'$ are equivalent, where $w$ is the permutation of $C(n,k)$ such that $w(\rho) = \rho'$. Then any sequence of elementary equivalences morphing $\rho$ into $\rho'$ is an expression for $w$ in $S_N$. Conversely, any reduced expression for $w$ will give a sequence of elementary equivalences taking $\rho$ to $\rho'$.
\end{lemma}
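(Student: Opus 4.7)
The plan is to handle the two implications separately. The forward direction is essentially tautological: each elementary equivalence is, by Definition \ref{DGH:def:equiv}, an adjacent transposition of two entries in the current total order on $C(n,k)$, so after identifying positions in the order with $\{1,\dots,N\}$, it is a simple transposition $s_i \in S_N$. A sequence of elementary equivalences morphing $\rho$ into $\rho'$ therefore multiplies out to a permutation of $C(n,k)$, which by hypothesis is $w$. Hence the sequence is an expression for $w$ in $S_N$ (not necessarily reduced), and no real obstacle arises.

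For the reverse direction, the key preliminary observation is that equivalent admissible $k$-orders must agree on every $k$-packet. Indeed, by Lemma \ref{DGH:lem:SharePacket}, an elementary equivalence swaps two $k$-sets that share no common $k$-packet, so the restriction to each packet is left untouched at each step. Inducting on the length of an equivalence chain connecting $\rho$ to $\rho'$ shows that $\rho$ and $\rho'$ restrict identically to every packet. Consequently, for any pair $X,Y$ lying in some shared $k$-packet, their relative order in $\rho$ coincides with their relative order in $\rho'$, so $(X,Y)$ is \emph{not} an inversion of $w$.

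Now let $s_{i_1}\cdots s_{i_l}$ be a reduced expression for $w$ in $S_N$. A standard fact about reduced expressions in the symmetric group is that, interpreted in the evolving total order, the simple transpositions appearing in such an expression are in bijection with the inversions of $w$ (in the braid diagram for a reduced expression, each pair of strands crosses at most once). Consequently, each swap in the reduced expression acts on a pair $(X,Y)$ whose relative order differs between $\rho$ and $\rho'$; by the previous paragraph, such a pair is never in a shared $k$-packet. Moreover, swapping two non-packet-mates preserves the restriction to every packet, so admissibility propagates step-by-step and every intermediate total order lies in $A(n,k)$. Hence each swap is a genuine elementary equivalence in the sense of Definition \ref{DGH:def:equiv}. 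I expect the main obstacle to be precisely this admissibility check on the intermediate orders; its resolution rests on the twin observations (packet-agreement for equivalent orders, and admissibility-preservation under non-packet swaps) developed above.
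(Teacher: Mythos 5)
Your proof is correct and follows essentially the same line as the paper's: both arguments rest on the observation that equivalent orders agree on every $k$-packet, so any packet-sharing swap in an expression for $w$ would have to be undone (equivalently, packet-mates are never inversions of $w$), which is incompatible with the expression being reduced. You add the worthwhile explicit check that each intermediate order stays admissible, a point the paper leaves implicit, but this is a refinement rather than a different route.
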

\begin{proof}
	The only non-trivial statement is made in the final sentence. Because $\rho$ and $\rho'$ are equivalent to each other, we only need to permute terms of $\rho$ which are not in the same packet in order to transform $\rho$ into $\rho'$. Therefore any expression for $w$ which involves swapping terms in a shared packet must undo that swap. So any such expression is not reduced. Necessarily, any reduced expression gives a sequence of elementary equivalences taking $\rho$ to $\rho'$. 
\end{proof}

\begin{definition}\label{DGH:def:Bnk}
	The set $B(n,k)$ is defined as the quotient of $A(n,k)$ by $\sim$. For any $\rho \in A(n,k)$, we denote its class in $B(n,k)$ as $[\rho]$. The set $B(n,k)$ is called the \emph{$k$-th higher Bruhat order of $S_n$}.
\end{definition} 

\begin{example}
	A graph of $B(4,2)$ is obtained from Figure \ref{DGH:fig:S4rex} by collapsing any nodes connected by a double edge. See Figure \ref{DGH:fig:B42} for the actual graph.
\end{example}

\begin{figure}
	\centering
	\begin{tikzpicture}
		\node[ellipse, draw] (4) at (4,6){\{utsutu, utustu\}};
		\node[ellipse, draw] (3R) at (6.5,4.75){utstut};
		\node[ellipse, draw] (3L) at (1.5,4.75){tutstu};
		\node[ellipse, draw, align=left] (2R) at (7.5,3){\{ustsut, ustust, \\ sutsut, sutust\}};
		\node[ellipse, draw, align=left] (2L) at (.5,3){\{tustsu,tustus, \\ tsutsu, tsutus\}};
		\node[ellipse, draw] (1R) at (6.5,1.25){stutst};
		\node[ellipse, draw] (1L) at (1.5,1.25){tstuts};
		\node[ellipse, draw] (0) at (4,0){\{stsuts,stusts\}};
		
		
		\draw[-] (4.south west) -- (3L.north) node[above, xshift = -.25cm, yshift = .25cm]{\{123\}};
		\draw[-] (4.south east) -- (3R.north) node[above, xshift = .25cm, yshift = .25cm]{\{234\}};
		
		\draw[-] (3L.south west) -- (2L.north) node[above, xshift = -.25cm, yshift = .25cm]{\{124\}};
		\draw[-] (3R.south east) -- (2R.north) node[above, xshift = .25cm, yshift = .25cm]{\{134\}};
		
		\draw[-] (2L.south) -- (1L.north west) node[below, xshift = -.75cm, yshift = .5cm]{\{134\}};
		\draw[-] (2R.south) -- (1R.north east) node[below, xshift = .75cm, yshift = .5cm]{\{124\}};
		
		\draw[-] (1L.south) -- (0.north west) node[below, xshift = -1.25cm, yshift = .5cm]{\{234\}};
		\draw[-] (1R.south) -- (0.north east) node[below, xshift = 1.25cm, yshift = .5cm]{\{123\}};
	\end{tikzpicture}
	\caption{Graph of $B(4,2)$}
\end{figure}
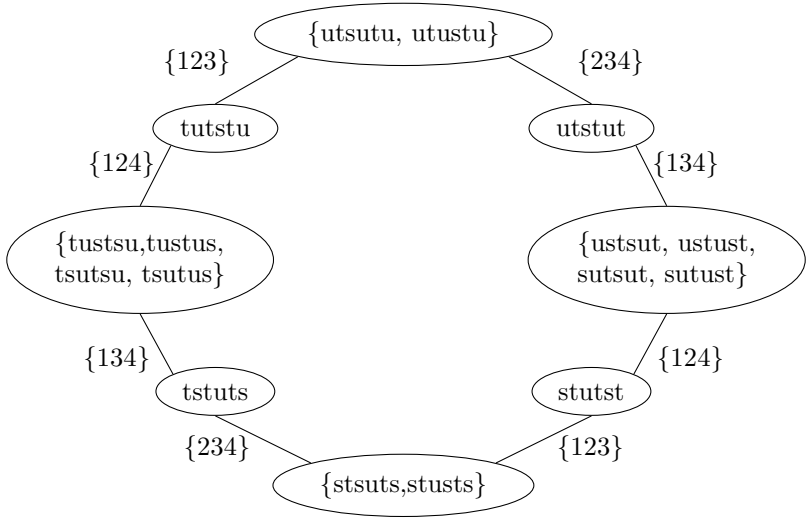\label{DGH:fig:B42}

Since the neighboring elements are not in a shared packet, two elementary equivalent orders have the same inversion set. Therefore $\Inv\text{: } A(n,k) \rightarrow 2^{C(n,k+1)}$ descends to a well-defined function on $B(n,k)$. Let $\Inv([\rho]) \coloneqq \Inv(\rho)$ for any $\rho \in [\rho]$. For the remainder of this paper we reserve the letter $r$ for elements of $B(n,k)$. For convenience, we follow \cite{DGH:SH2016} and let $r_{\min} = [\rlex]$, $r_{\max} = [\ranti]$. The use of ``max'' and ``min'' will be justified shortly. 

Elementary equivalences leave the inversion set of an admissible $k$-order unchanged. We now introduce an operation on admissible $k$-orders which alters the inversion set. When $k = 2$, we will see that this operation generalizes the Reidemeister III moves $sts = tst$ and $utu = tut$ seen before in $S_3$ and $S_4$. 

If the elements of some $k$-packet $P_X$ form a contiguous block in an order $\rho$ then we say that $P_X$ forms a \emph{chain} in $\rho$. Using this we define the function $N: A(n,k) \rightarrow 2^{C(n,k+1)}$  as
\begin{equation}N(\rho) \coloneqq \{X \in C(n,k+1) \mid P_X \text{ forms a chain in } \rho\}\end{equation}
If $X \in N(\rho)$ then we may ``flip'' $P_X$ in $\rho$ to create a new admissible $k$-order. This is done by reversing the order on $P_X$. 
\begin{definition}\label{DGH:def:flip}
	Let $\rho \in A(n,k)$ and $X \in N(\rho)$. Define $p_X(\rho)$ as the total order in which the chain $P_X$ is reversed while all the other elements remain the same. We call this operation a \emph{packet flip} and denote the resulting order as $p_X(\rho)$.
\end{definition}



\begin{example}\label{DGH:ex:S4Flip} The admissible $2$-orders 
	\begin{align*}
		\rho: 12 < 13 < 14 < 34 < 24 < 23 \\
		\rho': 12 < 34 < 14 < 13 < 24 < 23
	\end{align*}
are related by the packet flip of $134$. Recalling Example \ref{DGH:ex:S4Cor}, $\rho$ comes from the reduced expression $tstuts$, while $\rho'$ comes from $tsutus$. The packet flip of $134$ corresponds to the Reidemeister III move $tut = utu$.
\end{example}

Examples \ref{DGH:ex:S4Cor} and \ref{DGH:ex:S4Flip} not only demonstrate how packet flips generalize Reidemeister III moves. They also suggest how a packet flip increases or decreases the size of an inversion set. 

\begin{lemma}\label{DGH:lem:AdmFlip} Pick $\rho \in A(n,k)$ and $X \in N(\rho)$. Then $p_X(\rho)$ is admissible. Furthermore, the inversion set of $p_X(\rho)$ can be expressed as
	\upshape \begin{equation}\Inv(p_X(\rho)) = \left\{\begin{array}{cc}
			\Inv(\rho) \cup \{X\}, & \text{ if } X \in \Inv(\rho) \\
			\Inv(\rho)\sdiff{X}, & \text{ if } X \notin \Inv(\rho)
		\end{array} \right.  \label{DGH:eqn:flipinv}\end{equation}
\end{lemma}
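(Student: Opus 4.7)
The plan is to analyze how the packet flip $p_X$ affects the order on each individual $k$-packet $P_Y$, and then read off both admissibility and the inversion-set statement from this local analysis. The central observation is that, since $P_X$ forms a chain in $\rho$, its elements occupy some contiguous block of consecutive positions. Reversing the order within this block permutes only the positions of elements of $P_X$ amongst themselves; every element outside $P_X$ retains its original position and is therefore either earlier than every element of $P_X$ or later than every element of $P_X$, in both $\rho$ and $p_X(\rho)$.

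The key step is to invoke Lemma \ref{DGH:lem:SharePacket} at the level of $(k+1)$-sets: any packet $P_Y$ with $Y \neq X$ satisfies $|P_X \cap P_Y| \leq 1$. If $P_X \cap P_Y = \emptyset$, then all elements of $P_Y$ sit outside the flipped block and their mutual order is untouched. If instead $P_X \cap P_Y = \{Z\}$ for a single $k$-set $Z$, then only $Z$ sits inside the flipped block while every other element of $P_Y$ lies outside it. By the observation above, the relative order between $Z$ and each such outside element is preserved, and the outside elements are unmoved amongst themselves. In either case $\rho|_{P_Y} = p_X(\rho)|_{P_Y}$ for every $Y \neq X$.

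Admissibility of $p_X(\rho)$ is then immediate: for $Y \neq X$ the restricted order is unchanged and hence still agrees with either $\rlex|_{P_Y}$ or $\ranti|_{P_Y}$ by admissibility of $\rho$, while for $Y = X$ the restricted order is reversed, and $\rlex|_{P_X}$ and $\ranti|_{P_X}$ are reverses of one another. The inversion-set formula \eqref{DGH:eqn:flipinv} follows by direct comparison: membership of every $Y \neq X$ in $\Inv(\cdot)$ is preserved, while the restriction to $P_X$ is reversed and so membership of $X$ is toggled. The only delicate point is the bookkeeping of which elements of a given $P_Y$ land inside versus outside the flipped block, and Lemma \ref{DGH:lem:SharePacket} does precisely this combinatorial work by guaranteeing that at most one element of $P_Y$ can possibly lie in $P_X$; everything else is a routine consequence.
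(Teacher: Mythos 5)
Your proposal is correct and takes essentially the same route as the paper's proof: both hinge on Lemma~\ref{DGH:lem:SharePacket} and the observation that reversing $P_X$ leaves the induced order on every other $k$-packet unchanged, from which both admissibility and the inversion-set formula follow immediately. Your version simply spells out the positional bookkeeping (the contiguous block, which elements move, which stay fixed) that the paper's two-sentence proof leaves implicit.
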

\begin{proof}
	The admissibility of $p_X(\rho)$ follows from Lemma $\ref{DGH:lem:SharePacket}$. Reversing the order on $P_X$ will in no way alter the ordering on any other $k$-packet. Equation $\ref{DGH:eqn:flipinv}$ also follows from this observation. If $X \notin \Inv(\rho)$, then reversing the order on $P_X$ results in $p_X(\rho)$ inducing the antilexicographical ordering on $P_X$. Alternatively, if $\rho$ induces the antilexicographic order on $P_X$, then $p_X(\rho)$ will induce the lexicographic order.
\end{proof}

Analogous to $N(\cdot)$, we define a function $N\text{: } B(n,k) \rightarrow 2^{C(n,k+1)}$. It is defined as 
\begin{equation}N(r) \coloneqq \bigcup_{\rho \in r} N(\rho).\end{equation}
Packet flips are also well-defined on $B(n,k)$. Whenever $X \in N(r)$ there is at least one $\rho \in r$ for which $X \in N(\rho)$. We define $p_X(r) \coloneqq [p_X(\rho)]$. It is not obvious that this is well-defined. Perhaps $\rho_1$ and $\rho_2$ are representatives of $r$ for which $X$ is flippable, but $p_X(\rho_1)$ is not equivalent to $p_X(\rho_2)$. The following lemma rules out this possibility. First we give an illustrative example.

\begin{example}\label{DGH:ex:FlipREx}
	Let $P_X = \{\widehat{x}_k,\ldots, \widehat{x}_1\}$ be a $(k-1)$-packet. Suppose that $X \in N(\rho)$ for an admissible $(k-1)$-order $\rho$, and that $\rho$ induces the lexicographic order on $P_X$. Further suppose that $y \leq_{\rho} \widehat{x}_k$ are adjacent in $\rho$ and that $y$ is not in the same packet with any $\widehat{x_i}$. We have the following commuting square:
	\[\begin{tikzcd}	
		y\widehat{x}_k\ldots\widehat{x}_1 \arrow{r}{\sim} \arrow{d}{p_X(\rho)} & \widehat{x}_k y\ldots\widehat{x}_1 \arrow{r}{\sim} & \ldots \arrow{r}{\sim} & \widehat{x}_k\ldots\widehat{x}_1 y \arrow{d}{p_X(\rho)} \\ 
		y\widehat{x}_1\ldots\widehat{x}_k \arrow{r}{\sim} & \widehat{x}_1 y\ldots\widehat{x}_k \arrow{r}{\sim} & \ldots \arrow{r}{\sim} & \widehat{x}_1\ldots\widehat{x}_k y 		
	\end{tikzcd}\]
\end{example}

\begin{lemma}\label{DGH:lem:FlipR} Let $X = \{x_1,\ldots,x_{k+1}\}$ and assume that $X \in N(r)$. The operation $p_X(r)$ is independent of the choice of the representative $\rho$.
\end{lemma}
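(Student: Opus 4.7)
The plan is to reduce the equivalence $\rho_1 \sim \rho_2$ to a sequence of moves each of which visibly commutes with $p_X$. Since elementary equivalences preserve inversion sets, $\Inv(\rho_1) = \Inv(\rho_2)$; in particular $X$ lies in both inversion sets or neither, so by admissibility both $\rho_1$ and $\rho_2$ induce the same (lex or antilex) order on $P_X$. Accordingly write $\rho_i = \alpha_i \mid P_X \mid \beta_i$, and let $A_i$ (resp.\ $B_i$) denote the set of $k$-sets appearing in $\alpha_i$ (resp.\ $\beta_i$).

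The crux of the argument is the following claim: if $y \notin P_X$ shares a $k$-packet with some $\widehat{x}_j \in P_X$, then $y \in A_1$ if and only if $y \in A_2$. By Lemma \ref{DGH:lem:SharePacket}, the shared packet is $P_Y$ with $Y = y \cup \widehat{x}_j \in C(n,k+1)$. Because $Y$ lies in both or neither of the common inversion sets, $\rho_1$ and $\rho_2$ induce the same lex/antilex order on $P_Y$, so the relative order of $y$ and $\widehat{x}_j$ agrees in both. Since $P_X$ is contiguous in each $\rho_i$ and $y \notin P_X$, the side of the block on which $y$ sits is determined by this relative order, proving the claim. Consequently any element of the symmetric difference $(A_1 \cup A_2) \setminus (A_1 \cap A_2)$ shares no $k$-packet with any element of $P_X$.

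Next, I transform $\rho_1$ into an order $\rho_1'$ with the same A/B partition as $\rho_2$ by sliding each such element across $P_X$ one at a time. Because such a $y$ shares no packet with any $\widehat{x}_j$, a sequence of elementary equivalences slides $y$ past the entire chain $P_X$; by the commuting square of Example \ref{DGH:ex:FlipREx}, applying the same sequence to $p_X(\rho_1)$ produces $p_X$ of the transformed order. Iterating yields $\rho_1 \sim \rho_1'$ and $p_X(\rho_1) \sim p_X(\rho_1')$, where $\rho_1'$ has the same flanking sets as $\rho_2$ and $P_X$ is still a chain in the same order.

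Finally, $\rho_1'$ and $\rho_2$ share identical $A$, $B$, and $P_X$-block data and remain equivalent, so by Lemma \ref{DGH:lem:OverIt} the permutation carrying one to the other fixes the positions occupied by $P_X$ and admits a reduced expression supported on $A$ alone and $B$ alone; these elementary equivalences do not touch $P_X$, so the same sequence transports $p_X(\rho_1')$ to $p_X(\rho_2)$. Chaining gives $p_X(\rho_1) \sim p_X(\rho_1') \sim p_X(\rho_2)$. The main obstacle is the key claim — it is where the interaction of admissibility with the equality of inversion sets is genuinely used; the rest is bookkeeping around the commuting square of Example \ref{DGH:ex:FlipREx}.
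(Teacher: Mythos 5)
Your proposal takes a genuinely different route from the paper. The paper proceeds by induction on $m$, the minimal number of elementary equivalences from $\rho$ to $\rho'$, and manipulates the associated reduced expression (via Lemma~\ref{DGH:lem:OverIt}) to push the element blocking $P_X$ across the whole packet before anything else happens. You instead give a one-shot structural decomposition $\rho_i = \alpha_i \mid P_X \mid \beta_i$, classify which flanking elements can differ (the ``side'' claim), and then engineer the equivalence directly. Your key claim --- that equality of inversion sets forces any $y$ sharing a packet with $P_X$ to land on the same side of the block in both orders --- is correct and is a nice reformulation of what the paper extracts more implicitly from the reduced-expression structure. The final reassembly step (decompose $w$ as $w_A w_B$ supported on the two sides, concatenate reduced expressions, invoke Lemma~\ref{DGH:lem:OverIt}) is also sound. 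Both proofs center on the commuting square of Example~\ref{DGH:ex:FlipREx}; yours just reaches it from the other direction.

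There is, however, a gap in the sliding step. You justify that once $y \in A_1 \triangle A_2$ is adjacent to the block, it can cross $P_X$, because $y$ shares no packet with any $\widehat{x}_j$. But you never address whether $y$ can reach the block in the first place: there may be elements of $A_1$ sitting between $y$ and $P_X$ in $\rho_1$. If $y$ shares a packet with such an element $z$, it cannot be commuted past $z$, and the slide cannot begin. The fix is the same idea as your key claim, applied again: if $y \in A_1 \setminus A_2$ and $z$ lies strictly between $y$ and the $P_X$-block in $\rho_1$, then either $z \in A_1 \cap A_2$ --- in which case $y$ and $z$ appear in opposite relative order in $\rho_1$ and $\rho_2$, so a shared packet would contradict $\Inv(\rho_1)=\Inv(\rho_2)$ --- or $z \in A_1 \setminus A_2$ as well, in which case one slides $z$ (being closer to $P_X$) before $y$. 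A symmetric argument handles $B_1 \setminus B_2$, and elements of $A_1 \setminus A_2$ and $B_1 \setminus B_2$ also cannot share a packet for the same inversion-set reason. You should make the order of sliding explicit (innermost first) and supply this additional no-shared-packet check; as written, the existence of the claimed ``sequence of elementary equivalences'' taking $\rho_1$ to $\rho_1'$ is asserted rather than established.
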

\begin{proof}
	We assume that $k > 1$ as no two admissible $1$-orders are elementary equivalent to each other. Let $\rho,\rho' \in r$ such that $P_X$ is a chain in both $\rho$ and $\rho'$. We induct on $m$, the minimum number of elementary equivalences needed to transform $\rho$ into $\rho'$. In particular we can choose a sequence of elementary equivalences which form a reduced expression, in the sense of Lemma \ref{DGH:lem:OverIt}. 
	
	When $m = 1$, the elementary equivalence involves no elements in $P_X$. This is because $\vert P_X\vert > 1$ and $P_X$ forms a chain in both $\rho_1$ and $\rho_2$. In this case, $p_X(\rho) \sim p_X(\rho')$ by the same elementary equivalence as $\rho$ and $\rho'$.
	
	Assume that $m > 1$. Now there exists a sequence of elementary equivalences connecting $\rho$ with $\rho'$, $\rho = \rho_1 \sim_1 \ldots \sim_m \rho_{m+1} = \rho'$. Suppose that $\sim_1$ swaps two elements which are not in $P_X$. Then $P_X$ is still a chain in $\rho_2$. By induction, $[p_X(\rho)] = [p_X(\rho_2)] = [p_X(\rho')]$. 
	
	So we may assume, without loss of generality, that $\sim_1$ swaps an element $a \in C(n,k)\setminus P_X$ with $\widehat{x}_1$. Since our sequence of elementary equivalences form a reduced expression, $a$ will not be swapped with $\widehat{x}_1$ again. Since $P_X$ is consecutive in both $\rho$ and $\rho'$, $a$ must eventually be swapped with $\widehat{x}_i$ for all $i$. Then there is some reduced expression which swaps $a$ with all of $P_X$ first. Let $\rho''$ be the result of these first $k+1$ swaps. Clearly $X$ is flippable in $\rho''$ as well. Inspection reveals that $p_X(\rho) \sim p_X(\rho'')$, just as in Example \ref{DGH:ex:FlipREx}. Now $p_X(\rho'') \sim p_X(\rho')$ by induction (or $m = k+1$ and $\rho'' = \rho$).
	
	The minimality of $m$ along with the fact that $P_X$ is a chain in $\rho'$ demands that $a$ is swapped past all of $P_X$. So we may assume that $m \geq k+1$. If $m = k+1$ it is evident that $[p_X(\rho)] = [p_X(\rho')]$. If $m > k+1$, we may further assume that $a$ is commuted past $P_X$ by consecutive swaps. Induction gives $[p_X(\rho)] = [p_X(\rho_{k+2})] = [p_X(\rho')]$. 
\end{proof}

\begin{notation}
	When discussing a sequence of packet flips each of which increases the size of an inversion set, it is often cumbersome to repeatedly write $p_X(r)$ for $X \in N(r)\setminus \Inv(r)$. Therefore we will write $\flipup{r}{r'}{X}$ in place of $r' = p_{X}(r)$. Here it is assumed that $X \in N(r)\setminus \Inv(r)$.
\end{notation}

\begin{example}\label{DGH:ex:N(r)}
	Computing $N(r)$ and, to a lesser extent, $\Inv(r)$ can become quite cumbersome for large $n$ and $k$. An algorithm is provided in Appendix \ref{DGH:Sec:App}. Here we provide a worked out example in $B(4,2)$, corresponding to the equivalence class of the reduced expression $tustsu$.
	\begin{center}
		\begin{tabular}{c|c|c} 
			\multicolumn{1}{c}{Orders}& \multicolumn{1}{c}{$N(\cdot)$}&\multicolumn{1}{c}{$\Inv(\cdot)$}\\
			\hline $34 < 12 < 14 < 24 < 13 < 23$ & $\{124\}$ & $\{234, 134\}$\\
			$34 < 12 < 14 < 13 < 24 < 23$ & $\emptyset$& $\{234, 134\}$\\
			$12 < 34 < 14 < 24 < 13 < 23$ & $\emptyset$ & $\{234, 134\}$\\
			$12 < 34 < 14 < 13 < 24 < 23$ & $\{134\}$ & $\{234, 134\}$\\
			\hline r & $\{134,124\}$ & $\{234, 134\}$
		\end{tabular}
	\end{center}  
	
	Since $N(r) = \{134, 124\}$ we may perform two different packet flips on $r$. The result of those flips are contained in the following table:
	\begin{center}
		\begin{tabular}{c|c|c} 
			\multicolumn{1}{c}{Packet}& \multicolumn{1}{c}{Representative of $p_X(r)$}&\multicolumn{1}{c}{$\Inv([p_X(r)])$}\\
			\hline $\{12,14,24\}$ & $34 < 24 < 14 < 12 < 13 < 23$ & $\{234, 134, 124\}$\\
			$\{13,14,34\}$ & $12 < 13 < 14 < 34 < 24 < 23$& $\{234\}$
		\end{tabular}
	\end{center}                                                                                                               
\end{example}

\subsection{Manin-Schechtman Theorem}\label{DGH:MSthm:MSThm}

Manin and Schechtman proved the following theorem about the collections $A(n,k)$ and $B(n,k)$\cite[\S2]{DGH:MS1989}.

\begin{theorem}[MS, 1989]\label{DGH:thm:MSthm} Let $m \coloneqq \binom{n}{k}$. Define the following relation on $B(n,k)$
	\begin{equation*}\label{DGH:rel:MSrel}
		\begin{aligned}
			r \leq_{\MS} r' \Leftrightarrow &\text{ there exists a sequence of } X_i \in C(n,k+1) \text{ such that }\\			
			&\flipup{\flipup{r'=r_1}{r_2}{X_1}}{}{X_2} \ldots \flipup{}{\flipup{r_i}{r_{i+1} = r'}{X_i}}{X_{i-1}}
		\end{aligned}
	\end{equation*}
	Then the following are true:
	\begin{enumerate}
		
		\item The relation $\leq_{\MS}$ defines a ranked partial order on $B(n,k)$. The rank function is given by $\vert\Inv(\cdot)\vert$, \label{MSthm:partone}
		
		\item As a ranked poset, $B(n,k)$ has a unique minimal element $r_{\min}$ and unique maximal element $r_{\max}$. Their inversion sets are $\Inv(r_{\min}) = \emptyset$ and $\Inv(r_{\max}) = C(n,k+1)$. \label{MSthm:parttwo}
		
		\item \label{MSthm:MSCor} There is a bijection between 
		\[
		\{\text{the set of maximal chains in } B(n,k)\} \xrightarrow{\sim} A(n,k+1)
		\]
		given by the map
		\[
		\flipup{\flipup{r_{\min}=r_0}{r_1}{X_1}}{}{X_2} \ldots \flipup{}{\flipup{r_{m-1}}{r_{m} = r_{\max}}{X_{m}}}{X_{m-1}} \mapsto \rho \coloneqq X_1\ldots X_m
		\]
		
		This is called the \emph{Manin-Schechtman correspondence}.
		
		\item Each element of $B(n,k)$ is uniquely defined by its inversion set. \label{MSthm:partfour}
		
	\end{enumerate}
\end{theorem}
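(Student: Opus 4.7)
The plan is to prove the four parts of the theorem in an interlocking order, with part (4) as the technical core from which the rest follow cleanly. Lemma \ref{DGH:lem:AdmFlip} already establishes that any flip-up $\flipup{r}{r'}{X}$ strictly increases $|\Inv|$ by exactly one, so $|\Inv|$ is the natural rank candidate and $\leq_{\MS}$ is at least a well-defined preorder. Antisymmetry, and hence the partial order claim in (1), will follow from (4): a cycle $r \leq_{\MS} r' \leq_{\MS} r$ would have strictly increasing ranks along each leg, forcing equal ranks and equal flipped packet sets, and by injectivity $r=r'$.

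For (2), observe that $\rho_{\lex}$ induces lex on every packet so has inversion set $\emptyset$, while $\rho_{\anti}$ has inversion set all of $C(n,k+1)$. Extremality in rank is immediate, but uniqueness still requires reachability: every class $r$ must sit on a chain of flip-ups from $r_{\min}$ to $r_{\max}$. I would prove this by induction on $|\Inv(r)|$, via the combinatorial claim that whenever $\Inv(r)\neq C(n,k+1)$ one can find $X \notin \Inv(r)$ such that $P_X$ forms a chain in some representative of $r$, and symmetrically for flip-downs. This reduces to showing that in any admissible $k$-order not equal to $\rho_{\anti}$, some lex-oriented packet already has its elements appearing consecutively in some elementary equivalent form.

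For (3), the forward map sends a maximal chain $(\flipup{r_{i-1}}{r_i}{X_i})_{i=1}^m$ to the total order $X_1 < X_2 < \ldots < X_m$ on $C(n,k+1)$, and one must check that this order is admissible: that for each $Y \in C(n,k+2)$ the induced sub-order on $P_Y$ is lex or antilex. This amounts to a local analysis of how flips within the $(k+2)$-set $Y$ constrain each other, generalizing the observation from Example \ref{DGH:ex:stutst} that in a triple $i<j<l$ the pair $il$ can never be crossed first. The inverse map reads a given admissible $(k+1)$-order as a flip sequence, with admissibility at each step guaranteeing that the next prescribed packet is flippable.

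The principal obstacle is (4), injectivity of $\Inv \colon B(n,k) \to 2^{C(n,k+1)}$. Two admissible orders $\rho, \rho'$ with $\Inv(\rho) = \Inv(\rho')$ induce identical orientations (lex or antilex) on every packet, and we must build a sequence of elementary equivalences transforming $\rho$ into $\rho'$. I would induct on $|C(n,k)|$: let $Z$ be the $\rho'$-minimum and attempt to commute it to the front of $\rho$. Any $W <_{\rho} Z$ sharing a packet $P_X$ with $Z$ would force $W <_{\rho'} Z$ within $P_X$ by the shared orientation on $P_X$, contradicting minimality of $Z$ in $\rho'$; hence everything strictly before $Z$ in $\rho$ commutes with $Z$, and $Z$ can be swapped to the front by elementary equivalences. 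Stripping $Z$ and iterating completes the induction. Alternatively, one can invoke Ziegler's isomorphism (Theorem \ref{DGH:thm:Zieg}), in which $B(n,k)$ is identified via $\Inv$ with the poset of realizable $(k+1)$-subsets, rendering injectivity automatic.
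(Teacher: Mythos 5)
The paper does not prove Theorem \ref{DGH:thm:MSthm} at all: it is cited from Manin--Schechtman \cite{DGH:MS1989}, and the text after the statement merely summarizes the shape of their argument (reduction to ``good orders'', the claim that $N(r)\cap\Inv(r)\neq\emptyset$ when $r\neq r_{\min}$, and the transpose symmetry $r\mapsto r^t$). So the comparison here is between your sketch and what the surrounding exposition says the real proof requires.

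Your part (4) argument is a clean and correct proof of injectivity: commuting the $\rho'$-minimum to the front of $\rho$ using the shared packet orientations, then iterating. (Phrasing it as ``induct on $|C(n,k)|$ and strip $Z$'' is slightly off because a total order on $C(n,k)\setminus\{Z\}$ is not an admissible $k$-order on a smaller ground set, but the fix is immediate: induct on the number of elements already placed in their $\rho'$-positions rather than literally shrinking $C(n,k)$.) However, your ``alternative'' appeal to Ziegler's Theorem \ref{DGH:thm:Zieg} would be circular, since Ziegler's isomorphism is built on top of --- and presupposes --- the Manin--Schechtman theorem, including precisely the injectivity of $\Inv$ on $B(n,k)$.

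The genuine gaps are in parts (2) and (3), and they coincide with exactly what the paper flags as the hard content (``It is not obvious that $B(n,k)$ should have a unique minimal and maximal element. Nor is it evident that maximal chains in $B(n,k)$ should generate all the admissible $(k+1)$-orders\ldots''). For (2), the assertion that some $X\notin\Inv(r)$ is flippable in some representative of $r$ whenever $r\neq r_{\max}$ --- equivalently that $N(r)\setminus\Inv(r)\neq\emptyset$ --- is stated as a ``combinatorial claim'' with no argument, and reduced to the equally unproven claim that some lex-oriented packet can be made consecutive by elementary equivalences. This is precisely where Manin and Schechtman's ``good orders'' machinery enters, and without it (or an alternative) there is no proof of uniqueness of $r_{\min}$ and $r_{\max}$. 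For (3), you correctly identify what must be checked (that the order $X_1<\dots<X_m$ read off a maximal chain restricts to lex or antilex on every $(k+2)$-packet, and that every admissible $(k+1)$-order arises this way), but you give no argument for either direction beyond a gesture at a ``local analysis generalizing Example \ref{DGH:ex:stutst}''. The forward direction needs an actual analysis of how flips of the $(k+2)$ packets $P_{\widehat{y}_i}$ inside a fixed $Y\in C(n,k+2)$ must interleave, and the backward direction requires showing that admissibility of the prescribed $(k+1)$-order guarantees that at each step the next packet $X_{i+1}$ is actually in $N(r_i)$. As written these are stubs, so the proposal is a sound outline with a correct core (part (4)) but does not constitute a proof of the theorem.
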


\begin{corollary}\label{DGH:cor:Bruhat}
	For fixed $n$, the poset $B(n,1)$ is isomorphic to $S_n$ as a poset under the weak left Bruhat order.
\end{corollary}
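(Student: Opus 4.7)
The plan is to show that the map $\iota:S_n\to A(n,1)=B(n,1)$ from Definition \ref{DGH:def:Snbij} is an isomorphism of posets, carrying the weak left Bruhat order on $S_n$ to the Manin-Schechtman order $\leq_{\MS}$ on $B(n,1)$. The first step is the observation that $B(n,1)=A(n,1)$ on the nose: any two distinct singletons $\{i\},\{j\}\subset I_n$ lie in the shared $1$-packet $P_{\{i,j\}}$, so no elementary equivalence may ever be performed, and each class in $B(n,1)$ is a singleton, as already noted in Example \ref{DGH:ex:equiv}. Since $\iota$ is already a bijection of sets, all that remains is to match the two partial orders.

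The key technical step is translating packet flips in $A(n,1)$ into algebraic moves on $S_n$. Writing $\iota(w)$ as the list $w^{-1}(1),w^{-1}(2),\ldots,w^{-1}(n)$, the $1$-packet $P_{\{i,j\}}$ with $i<j$ forms a chain in $\iota(w)$ precisely when $\{i,j\}=\{w^{-1}(k),w^{-1}(k+1)\}$ for some $k$. Flipping this packet swaps the entries at positions $k$ and $k+1$, and tracking inverses shows that the resulting order is $\iota(s_k w)$, where $s_k=(k,k+1)$ is a simple reflection. Moreover $\{i,j\}\notin\Inv(\iota(w))$ iff $w^{-1}(k)=i<j=w^{-1}(k+1)$, which by Remark \ref{DGH:rmk:InvAgree} is exactly the condition $\ell(s_k w)=\ell(w)+1$. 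Hence an increasing packet flip $\flipup{\iota(w)}{\iota(w')}{\{i,j\}}$ on the $B(n,1)$ side corresponds on the $S_n$ side to a length-raising left multiplication $w'=s_k w$, and every length-raising left multiplication arises this way.

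To conclude, recall from Theorem \ref{DGH:thm:MSthm}(\ref{MSthm:partone}) that $r\leq_{\MS} r'$ iff there is a sequence of increasing packet flips from $r$ to $r'$. Transporting this condition across $\iota^{-1}$ yields: there exist simple reflections $s_{k_1},\ldots,s_{k_m}$ with $w'=s_{k_m}\cdots s_{k_1}w$ and $\ell(s_{k_i}\cdots s_{k_1}w)=\ell(w)+i$ at every partial product. Equivalently, $s_{k_m}\cdots s_{k_1}$ is reduced and its concatenation with any reduced expression for $w$ remains reduced, which is precisely the statement that $w$ is a suffix of a reduced expression for $w'$, i.e.\ $w\leq_L w'$. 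Thus $\iota$ is an isomorphism of posets. No step here is genuinely deep: the whole argument is the bookkeeping performed in the second paragraph, together with the standard reformulation of weak left Bruhat covers as length-raising simple reflections acting on the left.
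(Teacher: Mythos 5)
Your proof is correct, but it follows a genuinely different route from the paper: the paper does not prove this corollary at all, instead citing Shelley-Abrahamson and Vijaykumar (\cite{DGH:SH2016}, \S 2) as the source. What you have done is supply a self-contained argument that the paper chooses to outsource. Your bookkeeping is all sound: $B(n,1)=A(n,1)$ because every pair of singletons shares a $1$-packet; flipping $P_{\{i,j\}}$ in $\iota(w)$ (with $\{i,j\}=\{w^{-1}(k),w^{-1}(k+1)\}$) passes to $\iota(s_kw)$ since $(s_kw)^{-1}=w^{-1}s_k$; and the non-inversion condition $w^{-1}(k)<w^{-1}(k+1)$ is exactly the standard criterion for $\ell(s_kw)=\ell(w)+1$. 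Matching chains of increasing packet flips with chains of length-raising left multiplications then transports $\leq_{\MS}$ to $\leq_L$. One small caveat worth flagging: your appeal to Remark \ref{DGH:rmk:InvAgree} should be read carefully, since the displayed formula there appears to contain a sign typo ($w(x)<w(y)$ where $w(x)>w(y)$ is meant); your argument does not actually depend on that formula being stated correctly, only on the definition of $\Inv(\iota(w))$ via antilexicographic packets, which you handle correctly. The trade-off between your approach and the paper's is simple: the paper keeps the exposition short by deferring to a reference, while yours makes the $k=1$ specialization explicit, which is arguably more transparent for a reader who wants to see how the Manin-Schechtman formalism collapses onto the classical weak order.
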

\begin{rmk}
	This result was originally observed by Manin and Schechtman, but a proof can be found in \cite[\S2]{DGH:SH2016}
\end{rmk}

\begin{corollary}\label{DGH:cor:RexGraph}
	For fixed $n$, the set $A(n,2)$ is in bijection with the set of reduced expressions of the longest word $w_0 \in S_n$. 
\end{corollary}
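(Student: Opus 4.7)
The plan is to chain together three bijections, two of which are already in the excerpt and one of which is classical. First, I apply Theorem \ref{DGH:thm:MSthm}(\ref{MSthm:MSCor}) with $k=1$. This yields a bijection between the set of maximal chains in $B(n,1)$ and $A(n,2)$, sending a chain
\[
\flipup{r_{\min} = r_0}{r_1}{X_1}\, \flipup{}{\cdots}{X_{m-1}}\, \flipup{r_{m-1}}{r_{m} = r_{\max}}{X_m}
\]
to the admissible $2$-order $X_1 X_2 \cdots X_m$ on $C(n,2)$, where $m = \binom{n}{2}$. So it suffices to biject maximal chains in $B(n,1)$ with reduced expressions for $w_0$.

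Second, I invoke Corollary \ref{DGH:cor:Bruhat} to identify $B(n,1)$ with $S_n$ under the weak left Bruhat order, via $\iota$ of Definition \ref{DGH:def:Snbij}. Under this isomorphism, $r_{\min} = [\rlex]$ corresponds to the identity and $r_{\max} = [\ranti]$ corresponds to $w_0$; and by Remark \ref{DGH:rmk:InvAgree}, the rank function $|\Inv(\cdot)|$ matches the Coxeter length. Consequently, maximal chains in $B(n,1)$ correspond under $\iota$ to saturated chains $e = u_0 \lessdot_L u_1 \lessdot_L \cdots \lessdot_L u_m = w_0$ in the weak left Bruhat order on $S_n$.

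Third, I use the classical dictionary: each cover $u_{i-1} \lessdot_L u_i$ is of the form $u_i = s_i u_{i-1}$ for a unique simple reflection $s_i$ with $\ell(u_i) = \ell(u_{i-1})+1$, and the word $w_0 = s_m s_{m-1}\cdots s_1$ read off from the chain is then a reduced expression; conversely, every reduced expression for $w_0$ gives such a saturated chain by taking successive left suffixes. Composing the three bijections yields the corollary.

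The one bit of genuine verification is that the identification in the second step is compatible with the covering relations, i.e.\ that a packet flip in $B(n,1)$ really corresponds under $\iota$ to left multiplication by a simple reflection that increases length. This is routine: a $2$-packet $P_{\{i,j\}}$ forms a chain in a $1$-order $\rho$ iff $i$ and $j$ are adjacent in $\rho$, and flipping $P_{\{i,j\}}$ simply swaps these two adjacent entries. Writing $\rho = \iota(u)$ and expanding the definition of $\iota$, swapping two values that sit in consecutive positions $k,k+1$ of the list $u^{-1}(1),u^{-1}(2),\ldots,u^{-1}(n)$ is precisely the effect of replacing $u$ by $su$ where $s$ is the simple transposition $(k,k+1)$; and the flip lies in $N(r)\setminus\Inv(r)$ exactly when this replacement increases length. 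No serious obstacle arises beyond assembling these pieces, so I expect the proof to be essentially a short composition of known results.
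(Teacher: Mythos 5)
Your proof is correct and follows essentially the same route as the paper's: identify $B(n,1)$ with weak left Bruhat order on $S_n$ via Corollary \ref{DGH:cor:Bruhat}, use the classical fact that saturated chains from $e$ to $w_0$ are reduced expressions, and apply the Manin--Schechtman correspondence of Theorem \ref{DGH:thm:MSthm}(\ref{MSthm:MSCor}) to biject maximal chains in $B(n,1)$ with $A(n,2)$. You are slightly more careful than the paper in two respects: you explicitly cite part (\ref{MSthm:MSCor}) rather than part (\ref{MSthm:partfour}), which is the part actually doing the work for this corollary, and you verify compatibility of the covering relations (that a packet flip of $P_{\{i,j\}}$ under $\iota$ corresponds to left multiplication by a simple reflection), a point the paper leaves implicit.
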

\begin{proof}
	By Corollary \ref{DGH:cor:Bruhat}, $B(n,1)$ corresponds to the usual weak left Bruhat order on $S_n$. In this case $1 \mapsto r_{\min}$ and $w_0 \mapsto r_{\max}$. Maximal chains in $B(n,1)$ therefore correspond to reduced expressions for the longest word $\omega_0 \in S_n$. Applying Theorem \ref{DGH:thm:MSthm}.\ref{MSthm:partfour} along with Matsumoto's theorem, we conclude that $B(n,2)$ is isomorphic to the reduced expression graph of $\omega_0$ in $S_n$.
\end{proof}

\begin{rmk}
	Given $\rho \in r$, we know that $\rho^t$ is an admissible $k$-order. We define $r^t \coloneqq [\rho^t]$. The quickest way to see that this is well-defined is to use Theorem \ref{DGH:thm:MSthm}. Note that if $\rho_1,\rho_2 \in r$, then $\rho_1^t$ and $\rho_2^t$ have the same inversion set by Lemma \ref{DGH:lem:InvRev}. Theorem \ref{DGH:thm:MSthm}(\ref{MSthm:partfour}) then states that $[\rho_1^t] = [\rho_2^t]$.
\end{rmk}	

The proof of Manin and Schechtman's theorem requires several pages. It is straightforward to show that $\leq_{\MS}$ is a partial order. That $B(n,k)$ is a ranked poset comes from Lemma \ref{DGH:lem:AdmFlip}. It is not obvious that $B(n,k)$ should have a unique minimal and maximal element. Nor is it evident that maximal chains in $B(n,k)$ should generate all the admissible $k+1$-orders on $C(n,k+1)$. Manin and Schechtman's proof is largely inductive in nature, working with representative orders which they deem to be ``good orders.'' 

Using these ``good orders'' Manin and Schechtman were able to prove that $\Inv(r) = \emptyset$ if and only if $r = r_{\min}$. They also showed that $N(r) \cap \Inv(r)$ is non-empty whenever $r \neq r_{\min}$. So there is always a sequence of packet flips, each one reducing the size of $\Inv(\rho)$, which connects $r_{\min}$ to $r$. Moreover, the involution $r \mapsto r^t$ reverses $\leq_{\MS}$. So the relation $r_{\min} \leq_{\MS} r^t$ implies that $r \leq_{\MS} r_{\max}$.

This approach is quite different from the one we take in this paper. That is because working directly with the Manin-Schechtman order is often quite difficult. Proving, via a sequence of packet flips, that two arbitrary admissible $k$-orders are related under the Manin-Schechtman partial order is a fairly abstruse proposition. Manin and Schechtman's ``good orders'' are only compatible when working with orders on $C(n,k)$. We do not know of an analog for orders on an arbitrary realizable $k$-set. Instead, it is more convenient to work with an alternate formulation provided by Ziegler.

\subsection{Realizable sets and the single step inclusion order}

Lemma \ref{DGH:lem:AdmFlip} along with Part \ref{MSthm:partfour} of Theorem $\ref{DGH:thm:MSthm}$ suggest that it may be easier to work directly with inversion sets rather than with equivalence classes of admissible $k$-orders. The question then becomes - is there a characterization of inversion sets which is amenable, under the correspondence in Theorem $\ref{DGH:thm:MSthm}.\ref{MSthm:partfour}$, to a partial ordering that is equivalent to the Manin-Schetchman partial order on $B(n,k)$? Ziegler affirmatively answered this question by characterizing inversion sets as \emph{realizable} $\boldsymbol{k+1}$\emph{-sets} (alt. consistent sets). See \cite{DGH:Z1993} for the original paper.

\begin{notation}\label{DGH:not:PreSuf} Let $T = \{t_1 < \dots < t_m\}$ be any totally ordered set. A \emph{prefix} of $T$ is any subset of the form $\{t_1,\ldots,t_i\}$. A prefix is \emph{proper} if $i < m$. A \emph{suffix} of $T$ is a subset of the form $\{t_i,\ldots, t_m\}$. A suffix is proper if $1 < i$.
\end{notation}

\begin{rmk}
	We will also refer to the prefix of a total order $\rho \in A(n,k)$. In this case we are referring to a prefix of $C(n,k)$ under the order $\rho$. The same is true for a suffix of a total order.
\end{rmk}

For any $(k+1)$-set $X = \{x_1,\ldots,x_{k+1}\}$, the packet of $X$ with its lexicographic order is $P_X = \{\widehat{x}_{k+1} < \ldots < \widehat{x}_{1}\}$. A prefix of some $k$-packet looks like $\{\widehat{x}_{k+1},\ldots,\widehat{x}_i\}$. A suffix looks like $\{\widehat{x}_{i},\ldots,\widehat{x}_{1}\}$.

\begin{example}\label{DGH:ex:PreSuf} The set $\{12, 13\}$ is a proper prefix of of the $2$-packet $P_X = \{12 < 13 < 23\}$. The set $\{13, 23\}$ is a proper suffix.
\end{example}



\begin{definition}\label{DGH:def:JsJpJFJ0} For any $J \subseteq C(n,k+1)$ define the following $(k+2)$-sets as:
	\begin{itemize}
		
		\item  $J_p \coloneqq \{X \in C(n,k+2) \mid P_X \cap J \text{ is a proper prefix of } P_X\}$
		
		\item  $J_s \coloneqq \{X \in C(n,k+2) \mid P_X \cap J \text{ is a proper suffix of } P_X\}$
		
		\item $J_F \coloneqq \{X \in C(n,k+2) \mid P_X \cap J = P_X\}$.
		
		\item $J_\emptyset \coloneqq \{x \in C(n,k+2) \mid P_X \cap J = \emptyset\}$.
	
	\end{itemize}
\end{definition}

\begin{notation}\label{DGH:not:Full} Whenever $X \in J_F$, we say that $X$ is \emph{full} in $J$. If $X \in J_{\emptyset}$, then $X$ is \emph{empty} in $J$.
\end{notation}

\begin{definition}
	$J$ is called a \emph{realizable} $\boldsymbol{k+1}$\emph{-set} if $P_X \cap J$ is either a prefix or a suffix for all $X \in C(n,k+1)$. Equivalently, $J$ is realizable if the union of $J_s$, $J_p$, $J_F$, and $J_\emptyset$ is all of $C(n,k+2)$. 
\end{definition}

\begin{rmk}\label{DGH:rmk:comp}
	If $J^c \coloneqq C(n,k)\setminus J$, then $J^c$ is also realizable as $(J^c)_s = J_p, (J^c)_s = J_s, (J^c)_F = J_{\emptyset}, (J^c)_{\emptyset} = J_F$.
\end{rmk}

In order to gain some familiarity with realizablity as well as the sets $J_s$, $J_p$, $J_F$, and $J_\emptyset$ it is worth briefly working through why this next lemma is true. We will often use it when $A = J_s$ and $B \subseteq J_s \cup J_F$.
\begin{lemma}\label{DGH:lem:ABpart}
	Let $A$ and $B$ be realizable $k$-sets such that $A \subset B$. Then we have:
	\begin{equation}
	\small{\begin{array}{ccc}
		B_{\emptyset} = B_{\emptyset} \cap A_{\emptyset}, & B_s = (B_s \cap A_s) \cup (B_s \cap A_{\emptyset}), & B_{p} = (B_p \cap A_p) \cup (B_p \cap A_{\emptyset})\\
		\multicolumn{3}{c}{B_{F} = (B_F \cap A_p) \cup (B_F \cap A_s) \cup (B_F \cap A_{\emptyset}) \cup (B_F \cap A_F).}
	\end{array}}
	\end{equation}

	\begin{equation}
		\small{\begin{array}{ccc}
			A_{\emptyset} = A_{\emptyset} \cap B_{\emptyset}, & A_s = (A_s \cap B_s) \cup (A_s \cap B_F), & A_p = (A_p \cap B_p) \cup (A_p \cap B_F)\\
			\multicolumn{3}{c}{A_F = A_F \cap B_F.}
		\end{array}}
	\end{equation}
\end{lemma}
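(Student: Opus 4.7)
The plan is to verify each equation by a pointwise case analysis on $X \in C(n,k+2)$, using only the fact that $A \subseteq B$ implies $P_X \cap A \subseteq P_X \cap B$. By realizability of $A$ and $B$, each of $P_X \cap A$ and $P_X \cap B$ is one of four types: empty, a proper prefix of $P_X$, a proper suffix of $P_X$, or all of $P_X$. This yields sixteen a priori possibilities for the pair of types, and the task is to rule out those incompatible with the inclusion $P_X \cap A \subseteq P_X \cap B$.

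The key combinatorial observation is that, in the lexicographic labelling $P_X = \{\widehat{x}_{k+2} < \cdots < \widehat{x}_1\}$, any non-empty proper prefix of $P_X$ contains $\widehat{x}_{k+2}$ but omits $\widehat{x}_1$, while any non-empty proper suffix contains $\widehat{x}_1$ but omits $\widehat{x}_{k+2}$. From this I would deduce, reading the cases by the type of $P_X \cap B$: (i) if $P_X \cap B = \emptyset$ then forcibly $P_X \cap A = \emptyset$; (ii) if $P_X \cap B$ is a proper suffix, then $P_X \cap A$ cannot be full (its superset $P_X \cap B$ would then equal $P_X$) and cannot be a non-empty proper prefix (it would contain $\widehat{x}_{k+2}$, which $P_X \cap B$ omits), so $P_X \cap A$ is either empty or a proper suffix; (iii) dually, if $P_X \cap B$ is a proper prefix, then $P_X \cap A$ is empty or a proper prefix; (iv) if $P_X \cap B = P_X$, every type for $P_X \cap A$ is permitted. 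Reading the cases the other way around: $X \in A_F$ forces $P_X \subseteq A \subseteq B$ so $X \in B_F$; $X \in A_s$ forces $P_X \cap B$ to contain the proper suffix $P_X \cap A$, hence to be either a proper suffix or full; dually for $A_p$; and $X \in A_\emptyset$ carries no information about $B$ beyond the trivial partition.

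Each equation in the lemma is then a direct transcription of one of these bullet items, combined with the partition $C(n,k+2) = J_\emptyset \sqcup J_s \sqcup J_p \sqcup J_F$ valid for any realizable $J$. For instance, $B_s = (B_s \cap A_s) \cup (B_s \cap A_\emptyset)$ is exactly (ii), and $B_F = (B_F \cap A_p) \cup (B_F \cap A_s) \cup (B_F \cap A_\emptyset) \cup (B_F \cap A_F)$ is exactly (iv) together with the partition. The remaining equations follow symmetrically; in particular $A_F \subseteq B_F$ and $A_s \subseteq B_s \cup B_F$ yield the statements on the $A$ side.

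The main obstacle, such as it is, lies in carefully justifying the exclusion steps in (ii) and (iii): one must use both the propriety of the prefix/suffix (ensuring a specific element of $P_X$ is missing) and the incompatibility of a non-empty prefix with a non-empty proper suffix under containment. Once these two elementary facts are spelled out, the remainder of the proof is a mechanical check, so I would present the argument compactly by stating the four bullets (i)--(iv) as a preliminary claim, and then reading off each equation from them.
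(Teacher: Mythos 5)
The paper supplies no proof here---it is ``left as an exercise''---so there is nothing to compare your argument against; taken on its own, your pointwise case analysis on packets is the right approach, and your bullets (i)--(iv) together with the symmetric observations establish seven of the eight equations cleanly.

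But you should not have written ``each equation in the lemma is then a direct transcription of one of these bullet items,'' because your own analysis refutes one of them. You correctly observe that ``$X \in A_\emptyset$ carries no information about $B$ beyond the trivial partition,'' and this is exactly what goes wrong: the trivial partition gives
\[
A_\emptyset = (A_\emptyset \cap B_\emptyset) \cup (A_\emptyset \cap B_s) \cup (A_\emptyset \cap B_p) \cup (A_\emptyset \cap B_F),
\]
whereas the lemma asserts $A_\emptyset = A_\emptyset \cap B_\emptyset$, i.e.\ $A_\emptyset \subseteq B_\emptyset$. This fails: take $A = \emptyset$ and $B = C(n,k)$, so $A_\emptyset$ is all of $C(n,k+1)$ while $B_\emptyset$ is empty. (The correct dual of $B_\emptyset = B_\emptyset \cap A_\emptyset$ is $A_F = A_F \cap B_F$, which you do prove; the dual of the trivial $B_F$ decomposition is the trivial $A_\emptyset$ decomposition.) This is a typo in the lemma, and a blind proof that silently asserts all eight equations follow is itself in error---your write-up should flag that the $A_\emptyset$ equation as printed does not hold and state the corrected (trivial) version. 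Aside from that, the only other small blemish is an indexing slip: with $A, B \subseteq C(n,k)$ the case analysis should range over $X \in C(n,k+1)$, not $C(n,k+2)$, though this mirrors an off-by-one inconsistency already present between the lemma statement and Definition \ref{DGH:def:JsJpJFJ0}.
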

\begin{proof}
	Left as an exercise.
\end{proof}

The definition of realizability presented in this paper is a reformulation of Ziegler's notion of consistency. Oftentimes it is easier to show that a $k$-set and its complement are convex, another equivalent reformulation of realizability. The following lemma establishes this equivalence. It also proves that every inversion set of an admissible $k$-order is a realizable $(k+1)$-set.

\begin{lemma}[Ziegler 2.4]\label{DGH:lem:Zreal}
	Every inversion set $U \subset C(n,k+1)$ satisfies the following equivalent conditions:
	\begin{enumerate}
		
		\item $U$ and its complement are both \emph{convex}: if $\{j_1 < j_2 < j_3\} \subseteq K$ for some $K \in C(n,k+2)$, then the intersection of $U$ with $\{\widehat{j}_3 < \widehat{j}_2 < \widehat{j}_1\}$ is neither $\{\widehat{j}_3, \widehat{j}_1\}$ nor $\{\widehat{j}_2\}$,
		
		\item $U$ is realizable, that is, the intersection of $U$ with any $(k+1)$-packet is a prefix or suffix of the packet.  
	
	\end{enumerate}
\end{lemma}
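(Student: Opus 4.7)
The plan is to dispatch the equivalence $(1) \Leftrightarrow (2)$ first, then prove directly that any inversion set $U = \Inv(\rho)$ meets condition $(2)$. For the equivalence, I would observe a general combinatorial fact: a subset $S$ of a finite totally ordered set is either a prefix or a suffix if and only if its indicator sequence contains no length-three pattern $010$ or $101$, i.e., no included element flanked by missing ones and no missing element flanked by included ones (proof: if $S$ is neither a prefix nor a suffix, then $S \neq \emptyset, T$ and the first included index $i$ and last included index $j$ either sandwich a hole, forcing a $101$, or coincide with a block $[i,j]$ with $i>1$ and $j<m$, forcing a $010$ at $(i{-}1,i,j{+}1)$). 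Since the lex order on $P_K$ is $\widehat{j}_{k+2} < \cdots < \widehat{j}_1$, three indices $j_1 < j_2 < j_3$ in $K$ correspond to the lex-increasing triple $\widehat{j}_3 < \widehat{j}_2 < \widehat{j}_1$ inside $P_K$; the two forbidden intersections $\{\widehat{j}_2\}$ and $\{\widehat{j}_3,\widehat{j}_1\}$ are precisely the $010$ and $101$ patterns for $U$, the first expressing non-convexity of $U^c$ and the second non-convexity of $U$.

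Now fix $\rho \in A(n,k)$ and $U = \Inv(\rho)$. I would prove $(1)$ by contradiction: assume there exist $K \in C(n,k+2)$ and $j_1 < j_2 < j_3 \in K$ such that $U \cap \{\widehat{j}_3,\widehat{j}_2,\widehat{j}_1\}$ equals either $\{\widehat{j}_3,\widehat{j}_1\}$ or $\{\widehat{j}_2\}$. Set $L := K \setminus \{j_1,j_2,j_3\}$, a $(k-1)$-set, so each $\widehat{j}_c = L \cup \{j_a,j_b\}$ for $\{a,b\} = \{1,2,3\} \setminus \{c\}$. The three $k$-sets $L \cup \{j_1\}, L \cup \{j_2\}, L \cup \{j_3\}$ are the key: for each $c$, the $k$-packet $P_{\widehat{j}_c}$ contains exactly $L \cup \{j_a\}$ and $L \cup \{j_b\}$ from the triple, and a direct check (removing the later of $j_a, j_b$ yields the lex-smaller $k$-subset of $\widehat{j}_c$) shows $L \cup \{j_a\} <_{\lex} L \cup \{j_b\}$ whenever $a < b$. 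Thus the membership of $\widehat{j}_c$ in $U$ determines the order of $L \cup \{j_a\}$ and $L \cup \{j_b\}$ in $\rho$: if $\widehat{j}_c \in U$ the order reverses to antilex, if $\widehat{j}_c \notin U$ it stays lex.

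The final step is to read off the three resulting inequalities in $\rho$ and observe that they form a cycle. For $U \cap \{\widehat{j}_3,\widehat{j}_2,\widehat{j}_1\} = \{\widehat{j}_3,\widehat{j}_1\}$, admissibility forces $L \cup \{j_2\} <_{\rho} L \cup \{j_1\}$ (from $\widehat{j}_3 \in U$), $L \cup \{j_1\} <_{\rho} L \cup \{j_3\}$ (from $\widehat{j}_2 \notin U$), and $L \cup \{j_3\} <_{\rho} L \cup \{j_2\}$ (from $\widehat{j}_1 \in U$), a three-cycle contradicting that $\rho$ is a total order. The case $U \cap \{\widehat{j}_3,\widehat{j}_2,\widehat{j}_1\} = \{\widehat{j}_2\}$ is identical with all three inequalities reversed, yielding the opposite cycle.

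The main obstacle I expect is bookkeeping: one must keep track, for each $c$, of which of $L \cup \{j_a\}$ and $L \cup \{j_b\}$ is lex-smaller inside $P_{\widehat{j}_c}$ (the one removing the later of $\{j_a, j_b\}$, per the general rule $\widehat{x}_i <_{\lex} \widehat{x}_j$ when $i > j$), and then remember that $\widehat{j}_c \in U$ flips the order to antilex. With these sign conventions fixed consistently, the three inequalities assemble into a cycle and the contradiction is immediate; everything else in the argument is routine.
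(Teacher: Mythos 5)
Your proof is correct and takes the same route the paper does: the paper cites Ziegler for this lemma and later reproduces exactly the cycle argument you give (in the proof of Theorem \ref{DGH:thm:InvReal}, where the three $k$-sets $\widehat{a}\widehat{b}$, $\widehat{a}\widehat{c}$, $\widehat{b}\widehat{c}$ play the role of your $L\cup\{j_1\}$, $L\cup\{j_2\}$, $L\cup\{j_3\}$ and admissibility on the three packets $P_{\widehat{a}}$, $P_{\widehat{b}}$, $P_{\widehat{c}}$ forces the same contradictory 3-cycle in $\rho$). Your explicit treatment of the equivalence (1) $\Leftrightarrow$ (2) via the 010/101 pattern characterization of prefix-or-suffix subsets is also correct, and slightly more detailed than what the paper leaves implicit.
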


\begin{example}\label{DGH:ex:InvReal} As a special case of Lemma \ref{DGH:lem:Zreal}, the inversion sets of reduced expressions for $w_0 \in S_4$ are all realizable $3$-sets. There is a single $3$-packet when $n = 4$, $P_{1234}$. It contains $P_{1234} = \{123, 124, 134, 234\}$. Note that all the inversion sets in Example \ref{DGH:ex:S4Cor} are suffixes of $P_{1234}$. The inversion sets of entries on the other side of the rex graph are all prefixes. Moreover, the inversion set for each reduced expression in the reduced expression graph of $S_4$ (see Figure \ref{DGH:fig:S4rex}) is a prefix or suffix of $P_{1234}$.
\end{example}

Not only did Ziegler prove that every inversion set is realizable, but they also proved that every realizable $(k+1)$-set arises as the inversion set of some admissible $k$-order. Then, using the correspondence $r \leftrightarrow \Inv(r)$, Ziegler showed that the single step inclusion order on realizable $(k+1)$-sets is equivalent to the Manin-Schechtman order on $B(n,k)$. 

\begin{definition}\label{DGH:def:SSOrd}
	For any collection of sets $\mathfrak{U}$, we define the covering relationship
	\begin{equation}\label{SSOrd:Cover}
		U_1 \sqsubset U_2 \Leftrightarrow U_1 \subset U_2 \text{ and } \vert U_2\vert = \vert U_1\vert + 1
	\end{equation}
	The \emph{single step inclusion order on } $\boldsymbol{\mathfrak{U}}$ is then given by the relationship
	\begin{equation}\label{SSOrd:Ord}
		U \leq_{\SSO} \tilde{U} \Leftrightarrow \exists U_i's \in \mathfrak{U} \text{ with } U \sqsubset U_1 \sqsubset \dots \sqsubset \tilde{U} \text{ where } \vert U_i \setminus U_{i-1}\vert = 1.
	\end{equation}
\end{definition}

\begin{theorem}[Ziegler 4.1B] \label{DGH:thm:Zieg} Let $1 \leq k \leq n$. There is a natural isomorphism of posets between 
	\begin{enumerate}
		
		\item the higher Bruhat order $B(n,k)$,
		
		\item the set of all realizable subsets of $C(n,k+1)$, ordered by single step inclusion.
	
	\end{enumerate}
	The isomorphism is given by the map $r \mapsto \Inv(r)$ for any $r \in B(n,k)$. The unique minimal realizable $k$-set is $\emptyset$ and the unique maximal realizable $k$-set is $C(n,k)$.
\end{theorem}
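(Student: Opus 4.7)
The plan is to verify that the map $r \mapsto \Inv(r)$ from $B(n,k)$ to $2^{C(n,k+1)}$ is well-defined, lands in the realizable $(k+1)$-sets, is a bijection onto these, and transports $\leq_{\MS}$ to $\leq_{\SSO}$. Well-definedness on equivalence classes is essentially built into the definition of elementary equivalence, since swapping two neighboring elements that do not share a packet cannot change the induced order on any $k$-packet. That $\Inv(r)$ is realizable is exactly Lemma \ref{DGH:lem:Zreal}, and injectivity of $\Inv$ is Theorem \ref{DGH:thm:MSthm}(\ref{MSthm:partfour}). So the real work is surjectivity together with the two directions of the order equivalence.

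For the forward direction of the order comparison, suppose $r \leq_{\MS} r'$ is witnessed by a chain of up-flips $r = r_0 \nearrow_{X_1} r_1 \nearrow_{X_2} \cdots \nearrow_{X_m} r_m = r'$. By Lemma \ref{DGH:lem:AdmFlip} each up-flip adjoins exactly one new $(k+1)$-set to the inversion set, producing a single-step chain $\Inv(r) \sqsubset \Inv(r_1) \sqsubset \cdots \sqsubset \Inv(r')$, which is precisely the statement $\Inv(r) \leq_{\SSO} \Inv(r')$.

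I would then prove surjectivity and the converse order direction simultaneously by induction on $|J|$ for a realizable set $J \subseteq C(n,k+1)$. The base case $|J|=0$ is witnessed by $r_{\min}$ with $\Inv(r_{\min}) = \emptyset$. The inductive step hinges on the combinatorial claim that every nonempty realizable $J$ contains a \emph{peelable} element, meaning some $X \in J$ such that $J \setminus \{X\}$ is still realizable and such that $X \in N(r)$ for any $r$ with $\Inv(r) = J \setminus \{X\}$. Granted this, the induction hypothesis produces such an $r$, Lemma \ref{DGH:lem:AdmFlip} gives $\Inv(p_X(r)) = J$, and the up-flip $r \nearrow_X p_X(r)$ lifts the covering step $J \setminus \{X\} \sqsubset J$ to the Manin--Schechtman order. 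Iterating along any single-step chain $\Inv(r) \sqsubset J_1 \sqsubset \cdots \sqsubset \Inv(r')$ lifts it to a chain of up-flips, simultaneously establishing surjectivity and the implication $\Inv(r) \leq_{\SSO} \Inv(r') \Rightarrow r \leq_{\MS} r'$.

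The main obstacle is producing a peelable $X$. A natural candidate is an element of $J$ that is maximal among the elements of $J$ in the lexicographic order on $C(n,k+1)$; realizability of $J$ forces such an extremal element to sit at the prefix--suffix boundary of $J$ within every $(k+2)$-packet that contains it, which is exactly the condition making $J \setminus \{X\}$ realizable. Translating this boundary condition into the chain condition ``$P_X$ forms a consecutive block in some representative of $r$'' is the delicate part, and here I would leverage the decomposition results of Lemma \ref{DGH:lem:ABpart} to control how $J \setminus \{X\}$ and $\{X\}$ interact across $(k+2)$-packets, together with elementary-equivalence rearrangements in the spirit of Example \ref{DGH:ex:FlipREx} to physically push $P_X$ into a single contiguous run.
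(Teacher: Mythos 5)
The paper cites this as Ziegler's theorem and gives no internal proof, so I am evaluating your proposal on its own terms. The preliminary steps (well-definedness, realizability of $\Inv(r)$ via Lemma~\ref{DGH:lem:Zreal}, injectivity via Theorem~\ref{DGH:thm:MSthm}(\ref{MSthm:partfour}), and the forward order direction via Lemma~\ref{DGH:lem:AdmFlip}) are sound, but the hard half has two genuine gaps.

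First, the lex-maximal element of $J$ is not in general peelable. Take $n=4$, $k=2$, $J=\{134,234\}\subset C(4,3)$. The only $3$-packet is $P_{1234}=\{123<124<134<234\}$ and $J$ is a suffix of it, so $J$ is realizable; but its lex-maximal member is $234$, and $J\setminus\{234\}=\{134\}$ is neither a prefix nor a suffix of $P_{1234}$, hence not realizable. (Here one must peel the lex-\emph{minimal} element $134$ instead.) The issue is structural: from a $(k+2)$-packet meeting $J$ in a prefix you may only peel the lex-max end, from one meeting $J$ in a suffix only the lex-min end, and these demands conflict, so no globally lex-extremal choice can serve. Any correct choice has to be adaptive, tied to an admissible order whose inversion set is $J$ — which is essentially the ``good orders'' machinery of Manin and Schechtman.

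Second, and independently, the existence of a peelable element is strictly weaker than what your chain-lifting step needs. To lift a covering step $U_{i-1}\sqsubset U_i=U_{i-1}\cup\{X_i\}$ of realizable sets to an up-flip in $B(n,k)$, you must show the specific $X_i$ prescribed by the chain lies in $N(r_{i-1})$ for the unique $r_{i-1}$ with $\Inv(r_{i-1})=U_{i-1}$; your induction only produces \emph{some} removable $X$, which need not be $X_i$. The missing statement — that whenever $U$ and $U\cup\{X\}$ are both realizable, $P_X$ is flippable in the unique $r$ with $\Inv(r)=U$ — is exactly what the paper highlights in Remark~\ref{DGH:rmk:Ziegthm} as the key consequence of Ziegler's theorem, and it is the real content of the proof. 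With a correctly chosen peelable element you would recover surjectivity, but the implication $\Inv(r)\leq_{\SSO}\Inv(r')\Rightarrow r\leq_{\MS} r'$ would remain unproved.
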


\begin{rmk}\label{DGH:rmk:Ziegthm}
	A key takeawy of Ziegler's theorem is that if $\Inv(\rho)$ and $\Inv(\rho')$ differ by a single element $X$, then we can flip $P_X$ in both $\rho$ and $\rho'$.
\end{rmk}

\subsection{Admissibility and realizability}

An important consequence of Theorem \ref{DGH:thm:MSthm} and Theorem \ref{DGH:thm:Zieg} is the tight connection between the admissibility and realizability. In addition to the correspondence between realizable $(k+1)$-sets and admissible $k$-orders, one can characterize admissible $k$-orders precisely as those with realizable prefixes. 

\begin{lemma}\label{DGH:lem:AdmPre}
	When $k > 1$, a $k$-order $\rho$ is admissible exactly when every prefix of $\rho$ is a realizable $k$-set. 
\end{lemma}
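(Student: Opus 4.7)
The plan is to prove both directions by tracking, for each $(k+1)$-set $X$, how the intersection of the growing prefixes of $\rho$ with the packet $P_X$ evolves.

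For the forward direction, suppose $\rho$ is admissible and let $J$ be any prefix of $\rho$. Fix $X \in C(n,k+1)$; by admissibility $\rho|_{P_X}$ is either the lexicographic or antilexicographic order on $P_X$. Since $J$ consists of the initial elements of $\rho$, the intersection $J \cap P_X$ is an initial segment of $P_X$ under $\rho|_{P_X}$. In the lex case this is a lex-prefix of $P_X$; in the antilex case it is a lex-suffix of $P_X$. So every $(k+1)$-set $X$ contributes a prefix or suffix, meaning $J$ is realizable.

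For the backward direction, assume every prefix of $\rho$ is realizable. Fix $X \in C(n,k+1)$ and label $P_X = \{A_1 <_{\lex} A_2 <_{\lex} \dots <_{\lex} A_{k+1}\}$. For each length-$\ell$ prefix $J_\ell$ of $\rho$, set $\tau_\ell \coloneqq J_\ell \cap P_X$. As $\ell$ increases by one, $\tau_\ell$ either stays constant or gains a single element of $P_X$; by hypothesis each $\tau_\ell$ is a lex-prefix or lex-suffix of $P_X$. The first nonempty $\tau_\ell$ is a singleton, which is a lex-prefix iff it equals $\{A_1\}$ and a lex-suffix iff it equals $\{A_{k+1}\}$, so the first element of $P_X$ to appear in $\rho$ is either $A_1$ or $A_{k+1}$. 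I will show that if it is $A_1$ then $\rho|_{P_X}$ is the lex order; the case of $A_{k+1}$ is symmetric and yields the antilex order.

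Induct on $m$ to prove $\tau_\ell = \{A_1,\dots,A_m\}$ after the $m$th element of $P_X$ is added. The base case $m=1$ is immediate. For the step, suppose $\tau_\ell = \{A_1,\dots,A_m\}$ with $m < k+1$, and let $A_j$ be the next element of $P_X$ added, producing $\tau_{\ell+1} = \{A_1,\dots,A_m,A_j\}$. By hypothesis this set is a lex-prefix or lex-suffix of $P_X$ of size $m+1$. If it is a lex-prefix, then $j = m+1$ and we are done. If it is the lex-suffix $\{A_{k+1-m},\dots,A_{k+1}\}$, then it must contain $A_1$, forcing $k+1-m \leq 1$, i.e., $m \geq k$; combined with $m < k+1$ this gives $m = k$, and the only element of $P_X$ not already in $\tau_\ell$ is $A_{k+1}$, which again gives the lex-prefix. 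The main obstacle, and the place where $k > 1$ is used, is precisely this dichotomy step: one must rule out a "switch" from lex-prefix to lex-suffix in the middle of the sequence, and the containment inequality $k+1-m \leq 1$ does exactly that whenever $|P_X| = k+1 \geq 3$. (For $k = 1$ the packets have only two elements and every subset is simultaneously a prefix and a suffix, so the statement holds vacuously.) Thus $\rho|_{P_X}$ is lex; as $X$ was arbitrary, $\rho$ is admissible.
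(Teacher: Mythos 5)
Your proof is correct and takes essentially the same route as the paper: the forward direction is identical, and the backward direction makes explicit (via an induction on the size of the growing prefix) the observation that the paper states without proof, namely that the lexicographic and antilexicographic orders are the only orders on a packet all of whose initial segments are prefixes or suffixes of the packet. One small caveat: the restriction $k > 1$ is not really what rescues the dichotomy step (your induction in fact goes through, trivially, for $k = 1$ as well, since the lex-suffix case then coincides with the lex-prefix case at $m = k = 1$); the hypothesis $k>1$ in the paper merely excludes the degenerate case where both admissibility and realizability of prefixes are automatic.
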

\begin{proof}
	To begin suppose that $\rho$ is admissible. Let $J$ be a prefix of $\rho$. By definition, $\rho$ induces the lexicographic or antilexicographic order on every $k$-packet $P_X$. If $\rho$ induces the lexicographic order on $P_X$, then $J \cap P_X$ is a (not necessarily proper) prefix of $P_X$. If $\rho$ induces the antilexicographic order on $P_X$, the $J \cap P_X$ is a suffix of $P_X$. Hence $J$ is realizable. 
	
	Conversely suppose that every prefix of $\rho$ is a realizable $k$-set. There are only two orders on a packet $P_X$ all of whose prefixes are either prefixes or suffixes of the packet - the lexicographic and antilexicographic orders. Hence $\rho$ induces the lexicographic or antilexicographic order on every $k$-packet. It is therefore admissible. 
\end{proof}

The isomorphism of posets contained in Theorem $\ref{DGH:thm:Zieg}$ also gives additional necessary and sufficient conditions for when a set is realizable. In particular, a $k$-set is realizable if and only if it appears as the prefix of some admissible $k$-order (compare to Lemma \ref{DGH:lem:AdmPre}). These conditions can further be used to establish when two realizable $k$-sets are related under the Manin-Schechtman or single step inclusion order.

\begin{lemma}\label{DGH:lem:Zequiv}
	For $k > 1$ and $J \subseteq C(n,k)$ the following are equivalent:
	\begin{enumerate}
	
		\item $J$ is realizable,\label{Zequiv:partone}
	
		\item there exists a $\rho \in A(n,k)$ for which $J$ is a prefix, ie for all $x \in J$ and $y \in J^c$ we have $x \leq_{\rho} y$, \label{Zequiv:parttwo}
	
		\item there exists a $\rho \in A(n,k)$ for which $J_s \subseteq \Inv(\rho)$ and $J_p \subseteq \Inv(\rho)^c$. \label{Zequiv:partthree}
	
	\end{enumerate}
\end{lemma}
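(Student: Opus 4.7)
The plan is to establish the cycle $(1) \Rightarrow (2) \Rightarrow (3) \Rightarrow (1)$. The implication $(1) \Rightarrow (2)$ follows by combining Ziegler's correspondence with the Manin--Schechtman bijection. Specifically, applying Theorem~\ref{DGH:thm:Zieg} to $B(n,k-1)$ identifies the realizable $k$-set $J$ with a unique equivalence class $r \in B(n,k-1)$ satisfying $\Inv(r) = J$. Because $B(n,k-1)$ is a ranked poset with unique extrema $r_{\min}$ and $r_{\max}$ and rank function $|\Inv(\cdot)|$ (Theorem~\ref{DGH:thm:MSthm}), I can select a saturated chain $r_{\min} = r_0 \sqsubset r_1 \sqsubset \cdots \sqsubset r_N = r_{\max}$ that passes through $r$ at rank $|J|$. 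The Manin--Schechtman correspondence, part \ref{MSthm:MSCor} of Theorem~\ref{DGH:thm:MSthm}, converts this chain into an admissible $k$-order $\rho = X_1 X_2 \cdots X_N$ whose length-$|J|$ prefix is exactly $\Inv(r) = J$.

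For $(2) \Rightarrow (3)$, suppose $J$ is a prefix of $\rho \in A(n,k)$ and fix $X \in J_s$. Then $P_X \cap J$ is a proper, non-empty, non-full suffix of $P_X$ in lex order, while the prefix hypothesis says every element of $P_X \cap J$ precedes every element of $P_X \setminus J$ in $\rho$; thus $P_X \cap J$ forms a prefix of $P_X$ under $\rho|_{P_X}$. Admissibility forces $\rho|_{P_X}$ to be lex or antilex, and the lex case is impossible because a non-empty, non-full set cannot be both a prefix and a proper suffix of the same ordered sequence. Hence $\rho|_{P_X}$ is antilex and $X \in \Inv(\rho)$. A symmetric argument shows that $X \in J_p$ forces $X \notin \Inv(\rho)$.

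For $(3) \Rightarrow (1)$, given an admissible $\rho$ with $J_s \subseteq \Inv(\rho)$ and $J_p \subseteq \Inv(\rho)^c$, I build a candidate order $\rho'$ on $C(n,k)$ by listing the elements of $J$ first in their $\rho$-order, and then the elements of $J^c$ in their $\rho$-order; by construction $J$ is a prefix of $\rho'$. It then suffices to verify that $\rho'$ is admissible, for Lemma~\ref{DGH:lem:AdmPre} applied to this prefix will yield realizability of $J$. Admissibility on each packet $P_X$ is checked by case analysis: if $X \in J_F \cup J_\emptyset$ then $\rho'|_{P_X} = \rho|_{P_X}$ is already admissible; if $X \in J_p$ then the hypothesis forces $\rho|_{P_X}$ to be lex and the proper-prefix structure of $P_X \cap J$ means this prefix is already initial in $\rho|_{P_X}$, so $\rho'|_{P_X} = \rho|_{P_X}$ remains lex; the case $X \in J_s$ is symmetric and yields antilex. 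The main obstacle I anticipate is ruling out packets $X$ whose intersection $P_X \cap J$ is none of prefix, suffix, full, or empty of $P_X$, since on such a packet the shuffle sort would destroy admissibility. I plan to exclude these by invoking Ziegler's convexity reformulation (Lemma~\ref{DGH:lem:Zreal}): a non-realizable intersection on $P_X$ forces a convexity violation in $J$, and tracking this violation through the packets that share a $(k-1)$-face with $P_X$, combined with the admissibility of $\rho$ there, should contradict the containments imposed by (3).
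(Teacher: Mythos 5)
Your first two implications mirror the paper's argument closely: for $(1)\Rightarrow(2)$ you invoke Theorem~\ref{DGH:thm:Zieg} to realize $J$ as an inversion set and then extend a chain through $J$ to a maximal chain, exactly as the paper does; your $(2)\Rightarrow(3)$ argues packet by packet that a set cannot be simultaneously a prefix of $\rho|_{P_X}$ and a proper lex suffix unless $\rho|_{P_X}$ is antilex, which is the same content as the paper's comparison of $\leq_{\lex}$ with $\leq_{\rho}$. The structural difference is that you close the cycle at $(3)\Rightarrow(1)$, while the paper instead proves $(1)\Leftrightarrow(2)$ and $(2)\Leftrightarrow(3)$ as two separate biconditionals.

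The gap is in your $(3)\Rightarrow(1)$ step, and it cannot be repaired. You correctly identify the obstacle --- packets $P_X$ with $P_X\cap J$ none of prefix, suffix, full, or empty --- but the plan you give (``tracking this violation through the packets that share a $(k-1)$-face with $P_X$ \dots should contradict the containments imposed by (3)'') is a conjecture, not an argument, and in fact no such contradiction exists. Condition~(3) only constrains $\Inv(\rho)$ on packets lying in $J_s\cup J_p$; it imposes nothing on a bad packet, which by definition lies outside $J_s\cup J_p\cup J_F\cup J_\emptyset$. Concretely, take $n=3$, $k=2$, $J=\{13\}\subset C(3,2)$. Then $P_{123}\cap J=\{13\}$ is neither a prefix nor a suffix of $P_{123}=\{12<13<23\}$, so $J$ is not realizable and (1) fails (and (2) fails too, since no admissible $2$-order has $\{13\}$ as a prefix). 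Yet $J_s=J_p=\emptyset$, so (3) holds vacuously for any $\rho\in A(3,2)$. Thus $(3)\not\Rightarrow(1)$ in the generality stated.

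What the paper actually does in its $(3)\Rightarrow(2)$ direction is quietly assume realizability of $J$ (``Because $J$ is realizable, there are only two cases to consider, $K\in J_s$ and $K\in J_p$''), so its argument establishes $(1)\wedge(3)\Rightarrow(2)$, not $(3)\Rightarrow(2)$ on its own. This is consistent with the downstream use: Corollary~\ref{DGH:cor:Zequiv} restates precisely this with the hypothesis ``$J$ be a realizable $k$-set'' made explicit. So rather than attempting $(3)\Rightarrow(1)$, the salvageable form of your argument is to keep $(1)\Leftrightarrow(2)$ and $(2)\Rightarrow(3)$ as you have them, and then prove $(1)\wedge(3)\Rightarrow(2)$: with realizability in hand, every packet meeting both $J$ and $J^c$ lies in $J_s$ or $J_p$, so by (3) the offending out-of-order pairs in $\rho$ never share a packet, and a finite sequence of elementary equivalences moves $J$ to the front. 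That is exactly the paper's route, and it sidesteps the step that your plan cannot complete.
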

\begin{proof}
	\begin{description}
	
		\item[\ref{Zequiv:partone} $\Leftrightarrow$\ref{Zequiv:parttwo}] When $k > 1$, Theorem \ref{DGH:thm:Zieg} shows that $J$ is realizable if and only if there exists $r \in B(n,k-1)$ with $\Inv(r) = J$. Pick a path in $B(n,k-1)$ from $\emptyset \rightarrow J \rightarrow C(n,k)$. Such a chain exists by Theorem \ref{DGH:thm:MSthm}.\ref{MSthm:parttwo}. This chain is maximal. By Theorem \ref{DGH:thm:Zieg} the maximal chains in $B(n,k-1)$ are in bijection with $A(n,k)$. So there exists $\rho \in A(n,k)$ which orders realizable $k$-sets according to the maximal chain we selected. This implies \ref{Zequiv:parttwo}. The reverse implication comes from Lemma \ref{DGH:lem:AdmPre}.
	
		\item[\ref{Zequiv:parttwo} $\Rightarrow$\ref{Zequiv:partthree}] Let $\rho \in A(n,k)$ satisfy the conditions in \ref{Zequiv:parttwo}. Take $U \in J_s$. Then there exists $x \in P_U \cap J$ and $y \in P_U \cap J^c$ for which $y \leq_{\lex} x$. Because $J$ is a prefix of $\rho$, we know that $x \leq_{\rho} y$. Hence $U \in \Inv(\rho)$. 
		
		Now let $U \in J_p$. This time there exists $x \in P_U \cap J$ and $y \in P_U \cap J^c$ for which $x \leq_{\lex} y$. Because $J$ is a prefix of $\rho$, we must have $x \leq_{\rho} y$. Hence $U \notin \Inv(\rho)$.
		
		\item[\ref{Zequiv:partthree} $\Rightarrow$\ref{Zequiv:parttwo}] Let $\rho \in A(n,k)$ satisfy the conditions in part \ref{Zequiv:partthree}. If $J$ is already a prefix of $\rho$ then we are done. If not, we claim that $\rho$ is equivalent to some admissible order with $J$ as a prefix. 
		
		Consider a pair $(x,y)$ where $x \in J$ and $y \notin J$. We claim that if $x$ and $y$ are in some shared packet $P_K$, then $x \leq_{\rho} y$. Because $J$ is realizable, there are only two cases to consider, $K \in J_s$ and $K \in J_p$. If $K \in J_s$, then $y \leq_{\lex} x$. Because $J_s \subseteq \Inv(\rho)$, it follows that $x \leq_{\rho} y$. If $K \in J_p$, then $x \leq_{\lex} y$, and $x \leq_{\lex} y$ because $J_p \subseteq \Inv(\rho)^c$.
		
		Let $\bar{J}$ be the minimal prefix of $\rho$ that contains $J$. Based on the preceding argument, the set $\{(x,y) \in \bar{J} \mid x \in J, y \notin J, y \leq_{\rho} x\}$ consists entirely of pairs without a shared packet. These are also precisely all the pairs keeping $J$ from being a prefix of $\rho$. At least one of these pairs is consecutive in $\rho$. Swapping that pair and inducting on the number of offending pairs proves the claim.
	\end{description}
\end{proof}

\begin{corollary}\label{DGH:cor:Zequiv} Let $k > 1$ and $J$ be a realizable $k$-set. If $\rho$ is an admissible $k$-order for which $J_s \subseteq \Inv(\rho)$ and $J_p \subseteq \Inv(\rho)^c$, then $\rho$ is elementary equivalent to an admissible $k$-order $\rho'$ for which $J$ is a prefix.
\end{corollary}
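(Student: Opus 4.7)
The plan is to extract the content from the proof of implication \ref{Zequiv:partthree}$\Rightarrow$\ref{Zequiv:parttwo} in Lemma \ref{DGH:lem:Zequiv}, which in fact already produces an explicit sequence of elementary equivalences transforming $\rho$ into an admissible $k$-order with $J$ as a prefix. To make this a clean standalone argument, I would introduce a measure of how far $\rho$ is from having $J$ as a prefix, namely the set of ``bad pairs''
\[
\mathrm{Bad}(\rho) \coloneqq \{(x,y) \in J \times J^c \mid y \leq_{\rho} x\}.
\]
If $\mathrm{Bad}(\rho) = \emptyset$, then $J$ is already a prefix and we set $\rho' = \rho$. Otherwise, the strategy is to produce an elementary equivalence $\rho \sim \tilde{\rho}$ with $|\mathrm{Bad}(\tilde{\rho})| = |\mathrm{Bad}(\rho)| - 1$, and then induct.

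The main claim driving the induction is that no bad pair $(x,y)$ lies in a shared $k$-packet. Indeed, if $x,y \in P_K$ for some $K \in C(n,k+1)$, then since $J$ is realizable, $P_K \cap J$ is a prefix or suffix of $P_K$; because it contains $x$ but not $y$, we have $K \in J_s \cup J_p$. By hypothesis $J_s \subseteq \Inv(\rho)$ and $J_p \cap \Inv(\rho) = \emptyset$, which forces $\rho|_{P_K}$ to be antilexicographic in the first case and lexicographic in the second. A direct check in each case gives $x \leq_{\rho} y$, contradicting the definition of a bad pair. Hence any bad pair is swappable by an elementary equivalence once it becomes adjacent.

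It remains to exhibit an \emph{adjacent} bad pair in $\rho$. For this I would let $y$ be the $\leq_{\rho}$-smallest element of $J^c$ that is part of some bad pair, and let $z$ be the element of $C(n,k)$ immediately preceding $y$ in $\rho$. By minimality of $y$, every element of $J^c$ strictly preceding $y$ is not part of a bad pair, so in particular every element of $J$ that is $\leq_\rho$-later than some such $y' \in J^c$ would contradict minimality; unwinding this forces $z \in J$, making $(z,y)$ an adjacent bad pair. Applying the elementary equivalence swaps $z$ and $y$, strictly reduces $|\mathrm{Bad}(\cdot)|$, and preserves admissibility (since they share no packet). The induction terminates at a $\rho' \sim \rho$ with $\mathrm{Bad}(\rho') = \emptyset$, i.e.\ with $J$ as a prefix. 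The only real subtlety is the existence of an adjacent bad pair; everything else is bookkeeping already done inside Lemma \ref{DGH:lem:Zequiv}.
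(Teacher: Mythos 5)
Your overall strategy coincides exactly with the paper's: the corollary is, by the paper's own proof, precisely the last part of the argument for implication \ref{Zequiv:partthree}$\Rightarrow$\ref{Zequiv:parttwo} in Lemma \ref{DGH:lem:Zequiv}, and your ``bad pair'' bookkeeping together with the packet argument (showing any shared-packet pair is already correctly ordered) is faithfully extracted from it. That main claim is handled correctly.

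However, the one step you flagged as ``the only real subtlety'' --- exhibiting an adjacent bad pair --- contains a genuine error. You take $y$ to be the $\leq_\rho$-smallest element of $J^c$ participating in a bad pair, let $z$ be its immediate $\leq_\rho$-predecessor, correctly deduce $z \in J$, and then assert that $(z,y)$ is a bad pair. But $z <_\rho y$ with $z \in J$ and $y \in J^c$ is exactly the \emph{correct} relative order for $J$ to be a prefix; by your own definition $\mathrm{Bad}(\rho) = \{(x,y) \in J \times J^c \mid y \leq_\rho x\}$, the pair $(z,y)$ fails the condition $y \leq_\rho z$ and so is not bad. Swapping $z$ and $y$ would increase $|\mathrm{Bad}(\cdot)|$, not decrease it. The fix is to work from the other side: take $x$ to be the $\leq_\rho$-smallest element of $J$ that participates in a bad pair, and let $w$ be its immediate predecessor. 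Then $w$ exists (some $y' \in J^c$ has $y' <_\rho x$), and $w$ must lie in $J^c$: if $w \in J$, that same $y'$ satisfies $y' <_\rho w$ (it cannot sit strictly between $w$ and $x$), making $w$ part of a bad pair and contradicting minimality of $x$. So $(x,w)$ is an adjacent bad pair. Equivalently, and perhaps cleanest: if no adjacent pair ever transitions from $J^c$ to $J$, then once $\rho$ leaves $J$ it never returns, so $J$ is already a prefix; the contrapositive gives the adjacent bad pair. With this correction the rest of your induction goes through.
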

\begin{proof}
	This is the content of the last part of the proof of Lemma \ref{DGH:lem:Zequiv}.
\end{proof}

One way to look at Lemma \ref{DGH:lem:Zequiv} is that $J$ is realizable if and only if $\emptyset \leq_{\SSO} J$. Building off of this idea, we can come up with similar conditions for when two arbitrary realizable $k$-sets are related under the single-step inclusion order and, equivalently, the Manin-Schechtman order.

\begin{lemma}\label{DGH:lem:SSrel} Let $k > 1$ and $J \subset J'$ be realizable $k$-sets. The following are equivalent:
	\begin{enumerate}
	
		\item $J \leq_{\SSO} J'$, \label{SSrel:one}
	
		\item There exists some $\rho \in A(n,k)$ for which $J$ and $J'$ are both prefixes of $\rho$, with $J$ coming before $J'\setminus J$, \label{SSrel:two}
	
		\item There exists some $\rho \in A(n,k)$ for which $(J_s \cup J'_s) \subseteq \Inv(\rho)$ and $(J_p \cup J'_p) \subseteq \Inv(\rho)^c$. \label{SSrel:three}
	
	\end{enumerate}
\end{lemma}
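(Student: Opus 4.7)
My plan is to prove the three-way equivalence as the cycle $(1) \Rightarrow (2) \Rightarrow (3) \Rightarrow (1)$, closely mirroring the structure of Lemma \ref{DGH:lem:Zequiv} and using it as the main building block. The only genuinely new ingredient is an argument that lets one straighten out $J$ and $J'$ into a common admissible order simultaneously.

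For $(1) \Rightarrow (2)$, I would start with a witnessing chain $J = J_0 \sqsubset J_1 \sqsubset \cdots \sqsubset J_m = J'$ and extend it on both ends to a maximal chain $\emptyset \sqsubset \cdots \sqsubset J \sqsubset \cdots \sqsubset J' \sqsubset \cdots \sqsubset C(n,k)$ in $B(n,k-1)$, invoking Theorem \ref{DGH:thm:MSthm}.\ref{MSthm:parttwo} (together with Theorem \ref{DGH:thm:Zieg}) to produce the two extending chains. The Manin--Schechtman correspondence (Theorem \ref{DGH:thm:MSthm}.\ref{MSthm:MSCor}) then converts this maximal chain into an admissible $k$-order $\rho$ whose successive prefixes are exactly the sets appearing in the chain; in particular, $J$ and $J'$ are both prefixes of $\rho$ with $J$ appearing first. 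For $(2) \Rightarrow (3)$, it suffices to apply the implication \ref{Zequiv:parttwo} $\Rightarrow$ \ref{Zequiv:partthree} of Lemma \ref{DGH:lem:Zequiv} separately to $J$ and to $J'$ and then take the union.

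The substantive work is in $(3) \Rightarrow (1)$. I would first apply Corollary \ref{DGH:cor:Zequiv} to $\rho$ and $J$ to obtain $\rho_1 \sim \rho$ with $J$ as a prefix. Since elementary equivalences preserve inversion sets, $J'_s \subseteq \Inv(\rho_1)$ and $J'_p \subseteq \Inv(\rho_1)^c$ continue to hold. Then I would rerun the swapping argument from \ref{Zequiv:partthree} $\Rightarrow$ \ref{Zequiv:parttwo} of Lemma \ref{DGH:lem:Zequiv} to make $J'$ a prefix as well: any offending pair $(x,y)$ with $x \in J'$, $y \notin J'$, and $y \leq_{\rho_1} x$ must have $x \in J' \setminus J$ (the case $x \in J$ would force $x \leq_{\rho_1} y$ because $J$ is already a prefix), and the hypotheses on $J'_s, J'_p$ exclude such pairs from sharing a packet. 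Hence the required swaps happen entirely inside $J^c$ between elements not sharing a packet, so they can be realized by elementary equivalences without disturbing the prefix $J$. The resulting $\rho_2 \sim \rho$ has both $J$ and $J'$ as prefixes, and Lemma \ref{DGH:lem:AdmPre} guarantees that every intermediate prefix of $\rho_2$ is realizable, yielding a chain $J \sqsubset \cdots \sqsubset J'$ that witnesses $J \leq_{\SSO} J'$.

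The only real obstacle is making sure that turning $J'$ into a prefix does not undo the work of turning $J$ into one. This hinges on the containment $J \subseteq J'$: it forces every offending pair to lie strictly in $J^c$, so the elementary equivalences required never touch the already-arranged prefix $J$. With that observation in hand, each of the three implications reduces either to a direct reference to the single-set version in Lemma \ref{DGH:lem:Zequiv} or to a direct application of the Manin--Schechtman correspondence.
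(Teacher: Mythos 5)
Your proposal is correct and follows essentially the same approach as the paper: both reduce the work to Lemma \ref{DGH:lem:Zequiv} and its swapping argument, with the only variation being that in the key direction you first arrange $J$ as a prefix (via Corollary \ref{DGH:cor:Zequiv}) and then slide $J'$ into place inside $J^c$, while the paper first arranges $J'$ as a prefix and then sorts within it to push $J$ to the front. Both orderings require the same kind of observation about offending pairs not sharing a packet, so the two arguments are mirror images of each other rather than genuinely different routes.
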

\begin{proof}
	The equivalence of parts \ref{SSrel:one} and \ref{SSrel:two} is argued similarly to the equivalence of parts \ref{Zequiv:partone} and \ref{Zequiv:parttwo} from Lemma \ref{DGH:lem:Zequiv}. We will prove \ref{SSrel:two}$\Leftrightarrow$\ref{SSrel:three}.
	\begin{description}
	
		\item[\ref{SSrel:two}$\Rightarrow$\ref{SSrel:three}] This direction is immediate from Lemma \ref{DGH:lem:Zequiv}.
		
	
		\item[\ref{SSrel:three}$\Rightarrow$\ref{SSrel:two}] From Lemma \ref{DGH:lem:Zequiv} any $\rho \in A(n,k)$ satisfying part \ref{SSrel:three} is equivalent to one where $J'$ are prefixes. Without loss of generality, assume that this is true of $\rho$. We only need to prove that $\rho$ is equivalent to some order where $J$ comes before $J'\setminus J$. 
		
		As in the proof of Lemma \ref{DGH:lem:Zequiv}, consider a pair $(x,y)$ where $x \in J$ and $y \in J'\setminus J$. Again, we claim that if $x$ and $y$ are in a shared packet $P_K$, then $x \leq_{\rho} y$. The reasoning is as before. If $K \in J_s$, then $y \leq_{\lex} x$. Hence $x \leq_{\rho} y$ be virtue of $J_s$ begin a subset of $\Inv(\rho)$. If $K \in J_p$, then $x \leq_{\lex} y$, and $x \leq_{\rho} y$.
		
		The set $\{(x,y) \in J \times (J'\setminus J) \mid y \leq_{\rho} x\}$ then consists entirely of pairs which are not in a shared packet. We know that at least one such pair must be consecutive. So $\rho$ is elementary equivalent to another order with one fewer out-of-order pair. Induct on the number of out-of-order pairs to complete the proof.
	\end{description}
\end{proof}

\begin{notation}Throughout this paper we will often abuse terminology and refer to realizable $k+1$-sets as if they were elements of $B(n,k)$. For instance, given realizable $k+1$-sets $K$ and $L$ we might discuss ``a path from $K$ to $L$ in $B(n,k)$''. Here what we really mean is that $K = \Inv(r_1)$ and $L = \Inv(r_2)$ for elements $r_1,r_2 \in B(n,k)$ and that there is a chain from $r_1 \rightarrow r_2$ within $B(n,k)$. This abuse is justified by the isomorphism in Theorem $\ref{DGH:thm:Zieg}$
\end{notation}

\section{Admissible orders on realizable sets}\label{DGH:Sec:Adm}


When $k = 2$, Corollary \ref{DGH:cor:RexGraph} tells us that $B(n,2)$ corresponds to the reduced expression graph of $w_0$ in $S_n$. It is natural to wonder if we can replicate this for a non-longest word $w$ in $S_n$. Can we realize equivalence classes of reduced expressions for an arbitrary word in $S_n$ as some partially ordered set? As we saw in subsection \ref{DGH:MSthm:MSThm}, reduced expressions for $w_0$ corresponded to admissible $2$-orders on $\Inv(w_0)$. Meanwhile $\Inv(w)$ is a realizable $2$-set. Is there some generalization of admissibility for orders on an arbitrary realizable $k$-set $J$? Moreover, is there some sort of analog to Theorem \ref{DGH:thm:MSthm} which allows us to inductively create these partially ordered set? The rest of this paper is dedicated to providing an affirmative answer to this question when $k = 2$. For the remainder of this paper, assume that $J$ is a realizable $k$-set where $k \geq 2$.

\subsection{Paths from $\emptyset$ to $J$ in $B(n,k)$} \label{DGH:Two:One}

Later on we will define the i-th Bruhat order of a realizable $2$-set as a poset of equivalence classes of certain paths in $B(n,i-1)$. Before doing so, we need to establish notation and extend many of the definitions in Section \ref{DGH:Sec:One}. 

\begin{notation}
	The set of all paths from $\emptyset$ to $J$ in $B(n,k)$ is denoted $\paths{\emptyset}{J}{k}$.
\end{notation}

Note that any path $\gamma \in \paths{\emptyset}{J}{k}$ gives a total order on $J$. In Section \ref{DGH:Two:Two} we precisely characterize these total orders. For now we proceed by extending all of our Section \ref{DGH:Sec:One} definitions to these total orders.

\begin{definition}\label{DGH:def:InvSetsPath}
	Take $\gamma \in \paths{\emptyset}{J}{k}$. Define the \emph{inversion set of $\gamma$} as 
	\begin{equation}
	\Inv(\gamma) = \{X \in J_F \mid \gamma\vert_{P_X} = \rho_{\anti}\vert_{P_X}\} \cup J_s.
	\end{equation}
\end{definition}

\begin{rmk}\label{DGH:rmk:InvTop}
	We define the inversion set of any path $\tau$ from $J \rightarrow C(n,k)$ as $\Inv(\tau) \coloneqq \Inv(\tau^t)^c$. This exploits the fact that the transpose of a suffix of any maximal chain in $B(n,k-1)$ is the prefix of some other maximal chain. 
\end{rmk}

Note that $J_s \subseteq \Inv(\gamma)$ and $J_p \cap \Inv(\gamma) = \emptyset$ for any $\gamma \in \paths{\emptyset}{J}{k-1}$. This is similar to the condition on $\rho$ from Lemma \ref{DGH:lem:Zequiv}. However, here $\gamma$ induces an order on $J$ whereas earlier $\rho$ was an order on $C(n,k)$. 

When $J = C(n,k+1)$, the inversion set of an admissible $(k+1)$-order is a special case of Definition \ref{DGH:def:InvSetsPath}. Then paths from $\emptyset$ to $J$ in $B(n,k)$ are just the maximal chains in $B(n,k)$. By Theorem \ref{DGH:thm:MSthm}(\ref{MSthm:partfour}), the collection of maximal chains in $B(n,k)$ corresponds to $A(n,k+1)$. The inversion set of a maximal chain is precisely the inversion set of the corresponding admissible $(k+1)$-order.

\begin{rmk}\label{DGH:rmk:Done}
	Definition \ref{DGH:def:InvSetsPath} roughly states that $\Inv(\gamma)$ is the set of $X$ for which $\gamma$ is antilex on $P_X \cap J$. We will see in Lemma \ref{DGH:lem:0JPaths} that $\gamma$ must induce the antilexicographic order on $P_X \cap J$ for any $X \in J_s$. The above definition is slightly more precise when dealing with the case when $P_X \cap J$ is a singleton (which may be true for $X \in J_s$ or $X \in J_p$).
\end{rmk}

\begin{example}\label{DGH:ex:InvPaths} 	Here is the inversion set of a path in $B(4,2)$ (see Figure \ref{DGH:fig:S4rex}). The path is written down as an ordering of $J$.
	\begin{center}
		\begin{tabular}{c|c|c|c} 
			\multicolumn{1}{c}{Graph}& \multicolumn{1}{c}{$J$} & \multicolumn{1}{c}{Path}&\multicolumn{1}{c}{Inversion Set}\\
			\hline $B(4,2)$ & $\{134,234,124\}$& $234 < 134 < 124$ & $\{1234\}$
		\end{tabular}
	\end{center}  
Notice that the inversion set of the path is realizable. We will eventually prove Theorem \ref{DGH:thm:InvReal} that the inversion set of any path from $\emptyset$ to $J$ is realizable.
\end{example}

Analogous to the function $N: A(n,k) \rightarrow 2^{C(n,k+1)}$ from earlier, we define the function $N: \paths{\emptyset}{J}{k} \rightarrow 2^{J_F}$ by
\begin{equation}
	N(\gamma) = \{X \in J_F \mid P_X \text{ forms a chain in } \gamma\}.
\end{equation}
Take note that $N(\gamma)$ is a subset of $J_F$. It is impossible for a whole packet to make a chain in $\gamma$ if it is not full in $J$. Again, when $J = C(n,k+1)$ we recover the old definition of $N(\cdot)$ on $A(n,k+1)$. Packet flips are defined analogously as well. Again, they either increase or decrease the size of an inversion set by one element.

\begin{definition}\label{DGH:def:FlipPaths}
	Let $\gamma \in \paths{\emptyset}{J}{k}$ and $X \in N(\gamma)$. Define $p_X(\gamma)$ as the total order on $J$ in which the chain $P_X$ is reversed while all the other elements remain the same.
\end{definition}

We have defined $p_X(\gamma)$ as a total order on $J$. We should not expect all total orders of $J$ to appear as paths between $\emptyset$ and $J$ in $B(n,k)$. Fortunately, $p_X(\gamma)$ does.

\begin{lemma}\label{DGH:lem:PathFlips}
	If $\gamma \in \paths{\emptyset}{J}{k}$ and $X \in N(\gamma)$, then $p_X(\gamma) \in \paths{\emptyset}{J}{k}$.
\end{lemma}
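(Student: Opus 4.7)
The plan is to lift the question to the full higher Bruhat graph and reduce it to the already-established Lemma \ref{DGH:lem:AdmFlip}. First I would extend $\gamma$ to a maximal chain $\bar\gamma$ that runs from $\emptyset$ all the way up to $C(n,k)$ in the ambient Bruhat graph; such an extension exists because any vertex sits below the unique maximal element by Theorem \ref{DGH:thm:MSthm}(\ref{MSthm:parttwo}), so we may prepend $\gamma$ to any maximal chain from $J$ to the top. Via the Manin--Schechtman correspondence (Theorem \ref{DGH:thm:MSthm}(\ref{MSthm:MSCor})), $\bar\gamma$ corresponds to an admissible order $\rho$ whose first $|J|$ entries list the elements of $J$ in the order given by $\gamma$, followed by the elements of $J^c$ in some order.

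Next I would verify that $X \in N(\rho)$, so that Lemma \ref{DGH:lem:AdmFlip} applies. The hypothesis $X \in N(\gamma)$ says that $P_X$ forms a contiguous block in the $\gamma$-order on $J$, and the containment $N(\gamma) \subseteq J_F$ forces $P_X \subseteq J$. Since $\rho$ places all of $J$ as an initial segment, this block $P_X$ remains contiguous in $\rho$. Applying Lemma \ref{DGH:lem:AdmFlip} yields an admissible order $\rho' := p_X(\rho)$, and the Manin--Schechtman correspondence turns $\rho'$ back into a maximal chain $\bar{\gamma'}$ in the ambient Bruhat graph.

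Finally, I would observe that $\bar\gamma$ and $\bar{\gamma'}$ agree outside the window along which $P_X$ is traversed. Because that window sits inside the first $|J|$ positions of $\bar\gamma$ and reversing $P_X$ keeps it inside those positions, the vertex reached at position $|J|$ is unchanged: $\bar{\gamma'}$ still passes through $J$. The truncation of $\bar{\gamma'}$ from $\emptyset$ to $J$ is therefore a member of $\paths{\emptyset}{J}{k}$, and its induced order on $J$ is obtained from $\gamma$ by reversing exactly the block $P_X$, i.e.\ it is $p_X(\gamma)$.

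The one delicate point to watch is that extending $\gamma$ beyond $J$ does not spoil the property that $P_X$ is a chain in $\rho$; this follows because every element sitting after position $|J|$ in $\rho$ lies in $J^c$ and hence cannot slip between elements of $P_X \subseteq J$. Symmetrically, one must check that the reversed block in $\bar{\gamma'}$ does not escape the initial $J$-prefix, which is true for the same reason. Both are immediate consequences of having arranged $J$ to be a prefix of $\rho$, so once the extension is chosen the combinatorial heavy lifting is entirely done by Lemma \ref{DGH:lem:AdmFlip} and no fresh realizability argument is required.
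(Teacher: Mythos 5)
Your proof is correct and takes essentially the same route as the paper: extend $\gamma$ to a maximal chain passing through $J$, observe that since $X\in J_F$ the entire packet $P_X$ lies in the $J$-prefix so $X\in N(\rho)$ and the flip stays within the prefix, and truncate the flipped chain back to $J$. The paper's proof is terser but identical in structure; your extra care about the "delicate points" simply makes explicit what the paper leaves to the reader.
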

\begin{proof}
	Extend $\gamma$ to a path from $\emptyset \stackrel{\gamma}{\rightarrow} J \rightarrow C(n,k+1)$ in $B(n,k)$. This path corresponds to some admissible $(k+1)$-order $\rho$. Notice that $X \in N(\rho)$. As $X \in J_F$, $p_X(\rho)$, is still a path from $\emptyset \rightarrow J \rightarrow C(n,k+1)$. The subpath taken by $p_X(\rho)$ from $\emptyset \rightarrow J$ induces the same total order on $J$ as $p_X(\gamma)$. So $p_X(\gamma)$ is realized as a path between $\emptyset$ and $J$.
\end{proof}

\begin{definition}\label{DGH:def:EquivPaths} Two paths $\gamma, \gamma' \in \paths{\emptyset}{J}{k}$ are \emph{elementary equivalent} if they differ by the interchange of two neighboring elements that are not in a shared $(k+1)$-packet. 
	
Define $\sim$ to be the equivalence relation generated by these elementary equivalences. We define
	\begin{equation}\label{DGH:eqn:Bpaths}
		\epaths{\emptyset}{J}{k} = \paths{\emptyset}{J}{k} \Big/ \sim
	\end{equation}
	Let $\pi: \paths{\emptyset}{J}{k} \rightarrow \epaths{\emptyset}{J}{k}$ be the quotient map.
\end{definition}

Like before, elementary equivalent paths have the same inversion sets as each other. The function $\Inv(\cdot)$ descends to a well-defined function on $\epaths{\emptyset}{J}{k}$. Packet flips and $N(\cdot)$ also descend to $\epaths{\emptyset}{J}{k}$ in the same way that their analogs on $A(n,k+1)$ descended to $B(n,k+1)$. The proofs go through mutatis mutandis. We can also define an order on $\epaths{\emptyset}{J}{k}$ which reproduces the Manin-Schechtman order when $J_1 = \emptyset$ and $J_2 = C(n,k+1)$.

\begin{definition}\label{DGH:def:MSOrdPaths}
	Define the following relation on $\epaths{\emptyset}{J}{k}$:
	\begin{equation*}\label{DGH:rel:MSrel}
		\begin{aligned}
			r \leq_{\MS} r' \Leftrightarrow &\text{ there exists a sequence of } X_i \in J_F \text{ such that }\\			
			&\flipup{\flipup{r=r_1}{r_2}{X_1}}{}{X_2} \ldots \flipup{}{\flipup{r_i}{r_{i+1} = r'}{X_i}}{X_{i-1}}.
		\end{aligned}
	\end{equation*}
\end{definition}

\subsection{Admissible $J$-orders} \label{DGH:Two:Two}

Paths $\gamma \in \paths{\emptyset}{J}{k-1}$, in a very straightforward manner, generalize admissible $k$-orders to arbitrary realizable $k$-sets. In this subsection, we show that they correspond to admissible $J$-orders. We will also show that they have realizable inversion sets. The posets $\epaths{\emptyset}{J}{k-1}$ will also have unique minimal and maximal elements under $\leq_{\MS}$. This will lead to the definition of the second Bruhat order for $J$. First we show that any $\gamma \in \paths{\emptyset}{J}{k-1}$ induces a particularly nice order on $J$.

\begin{lemma}\label{DGH:lem:0JPaths}
	Let $\gamma \in \paths{\emptyset}{J}{k-1}$. As a total order on $J$, $\gamma$ induces
	\begin{itemize}
		
		\item ...the antilexicographic ordering on $P_X \cap J$ when $X \in J_s$,
		
		\item ...the lexicographic ordering on $P_X \cap J$ when $X \in J_p$,
		
		\item ...either the lexicographic or antilexicographic ordering on $P_X$ when $X \in J_F$.
		
	\end{itemize}
\end{lemma}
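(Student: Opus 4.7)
The plan is to promote $\gamma$ to a full admissible $k$-order and then read off the three cases directly from admissibility plus the fact that $J$ is a prefix of that order. Since $J \leq_{\MS} C(n,k)$ by Theorem \ref{DGH:thm:MSthm}(\ref{MSthm:parttwo}), I would first extend $\gamma$ to a maximal chain
\[\emptyset \xrightarrow{\gamma} J \xrightarrow{\tau} C(n,k)\]
in $B(n,k-1)$. Under the Manin--Schechtman correspondence (Theorem \ref{DGH:thm:MSthm}(\ref{MSthm:MSCor})), this maximal chain corresponds to an admissible $k$-order $\rho \in A(n,k)$ whose first $|J|$ entries list the elements of $J$ in the order dictated by $\gamma$. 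In particular $J$ is a prefix of $\rho$ and $\rho|_J = \gamma$ as total orders on $J$.

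Next, I would dispatch the three cases using the admissibility of $\rho$, which forces $\rho|_{P_X}$ to be either $\rho_{\lex}|_{P_X}$ or $\rho_{\anti}|_{P_X}$ for every $X \in C(n,k+1)$. The case $X \in J_F$ is immediate: here $P_X \subseteq J$, so $\gamma|_{P_X} = \rho|_{P_X}$ is lex or antilex. For $X \in J_s$, write $P_X = \{\widehat{x}_{k+1} <_{\lex} \cdots <_{\lex} \widehat{x}_1\}$ and note that $P_X \cap J$ being a proper suffix in lex order means $\widehat{x}_{k+1} \notin J$ while the smallest element of $P_X \cap J$ (some $\widehat{x}_i$ with $i \leq k$) lies in $J$. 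Since $J$ is a prefix of $\rho$, this forces $\widehat{x}_i <_{\rho} \widehat{x}_{k+1}$, which is incompatible with $\rho$ inducing the lex order on $P_X$. Hence $\rho$, and therefore $\gamma$, induces the antilex order on $P_X \cap J$. The case $X \in J_p$ is symmetric: now $\widehat{x}_{k+1} \in J$ and $\widehat{x}_1 \notin J$, so $\widehat{x}_{k+1} <_{\rho} \widehat{x}_1$ rules out antilex and forces $\gamma$ to be lex on $P_X \cap J$.

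There is no real obstacle in this argument; the work is entirely in setting up the extension to $\rho$ and then observing that realizability of $J$ — which controls whether the lex-minimum or lex-maximum of each packet falls inside $J$ — rigidly dictates the orientation of $\rho$ on each packet meeting $J$. The only mild subtlety to flag is the degenerate case where $P_X \cap J$ is a singleton (which can occur for $X \in J_s$ or $X \in J_p$); there the statement is vacuous since a one-element order is simultaneously lex and antilex, consistent with Remark \ref{DGH:rmk:Done}.
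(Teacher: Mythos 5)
Your proof is correct and follows essentially the same route as the paper: extend $\gamma$ to a maximal chain in $B(n,k-1)$, read it off as an admissible $k$-order $\rho$ with $J$ as a prefix via the Manin--Schechtman correspondence, and let admissibility plus the prefix condition pin down the orientation of $\rho$ on each packet. The only cosmetic difference is that where you argue directly about which of $\widehat{x}_{k+1}$, $\widehat{x}_1$ lie in $J$, the paper instead cites Lemma \ref{DGH:lem:Zequiv} to conclude $J_s \subseteq \Inv(\rho)$ and $J_p \subseteq \Inv(\rho)^c$; your argument is just an inlined version of that implication.
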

\begin{proof}
	Any path $\gamma \in \paths{\emptyset}{J}{k-1}$ can be extended to a maximal chain $\rho$ in $B(n,k-1)$ with $\gamma = \rho\vert_{J}$. By the Manin-Schechtman correspondence, $\rho$ is an admissible $(k+1)$-order. Thus it induces either the lexicographic or antilexicographic order on every $k$-packet. So $\gamma$ must induce either the lexicographic or antilexicographic order on $P_X \cap J$ for every $k$-packet.
	
	Because $J$ is a prefix of $\rho$, Lemma \ref{DGH:lem:Zequiv} shows that $J_s \subseteq \Inv(\rho)$ and $J_p \subseteq \Inv(\rho)^c$. Thus $\gamma$ induces the lexicographic order on $P_X \cap J$ for ever $X \in J_p$ and the antilexicographic order on $P_X \cap J$ for ever $X \in J_s$.
\end{proof}

\begin{definition}\label{DGH:def:admJ}
An order $\rho$ on $J$ is \emph{admissible} (alt. is an admissible $J$-order) if $\rho$ satisfies the following for any $k$-packet $P_X$:
\begin{itemize}
	
	\item $\rho$ induces the antilexicographic ordering on $P_X$ when $X \in J_s$.
	
	\item $\rho$ induces the lexicographic ordering on $P_X$ when $X \in J_p$.
	
	\item $\rho$ induces either the lexicographic or antilexicographic ordering on $P_X$ when $X \in J_F$. 
	
\end{itemize}

Let $A_J(n,k)$ denote the set of admissible $k$-orders on $J$.
\end{definition}

So far we have shown that $\paths{\emptyset}{J}{k} \hookrightarrow A_J(n,k)$. This embedding is actually a bijection. To avoid needless repetition, note that the definitions of inversion sets, packet flips, etcetera for $A_J(n,k)$ are the obvious adaptations of the same notions for $A(n,k)$ and $\paths{\emptyset}{J}{k}$. The definitions will intertwine with the embedding $\paths{\emptyset}{J}{k} \hookrightarrow A_J(n,k)$. 

\begin{rmk} The inversion set of an admissible $J$-order $\rho$ contains $J_s$ and is a subset of $J_s \cup J_F$.
\end{rmk}


In order to prove the desired bijection, we first must show that the inversion set of every admissible $J$-order is realizable. The proof requires a few auxiliary lemmas in order to address subtleties not present for admissible $k$-orders. When $J$ is a strict subset of $C(n,k)$, $J_{\emptyset}$ may be non-empty and one might worry that it causes $\Inv(\rho)$ to be non-realizable. Ultimately this does not happen. This lemma generalizes Ziegler's Lemma 2.6 which, in our notation, states that $J_s \cup J_F \cup J_{\emptyset}$ (alt. $J_p \cup J_F \cup J_{\emptyset}$ )is a realizable $(k+1)$-set \cite[\S2]{DGH:Z1993}.

\begin{lemma}\label{DGH:lem:Jreal} Whenever $J$ is a realizable $k$-set the following are true:
	\begin{enumerate}
		
		\item $J_s$ is a realizable $(k+1)$-set (alt. $J_p$)\label{Jreal:partone},
		
		\item No $(k+1)$-packet intersects both $J_F$ and $J_{\emptyset}$ non-trivially. \label{Jreal:parttwo}
	\end{enumerate}
\end{lemma}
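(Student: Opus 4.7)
My plan is to handle the two parts separately. Part~(2) is a short direct argument, whereas Part~(1) reduces immediately to Ziegler's Lemma~2.6 (which was quoted in the paragraph just before the lemma) via complementation inside $C(n,k+1)$.

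For Part~(2), I would argue by contradiction. Suppose some $K \in C(n,k+2)$ has $P_K$ meeting both $J_F$ and $J_\emptyset$, witnessed by $X_1 \in P_K \cap J_F$ and $X_2 \in P_K \cap J_\emptyset$. Since $J_F$ and $J_\emptyset$ are disjoint, $X_1 \neq X_2$; being distinct $(k+1)$-subsets of the $(k+2)$-set $K$, their union is all of $K$ and hence $|X_1 \cap X_2| = (k+1)+(k+1)-(k+2) = k$. So $X_1 \cap X_2$ is a $k$-subset lying in both $P_{X_1}$ and $P_{X_2}$. The hypothesis $X_1 \in J_F$ forces $X_1 \cap X_2 \in J$, while $X_2 \in J_\emptyset$ forces $X_1 \cap X_2 \notin J$, a contradiction.

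For Part~(1), I would use that the realizability of $J$ partitions $C(n,k+1)$ as $J_s \sqcup J_p \sqcup J_F \sqcup J_\emptyset$, so $J_s$ is the complement in $C(n,k+1)$ of $J_p \cup J_F \cup J_\emptyset$. The ``alt.''~version of Ziegler's Lemma~2.6, as quoted in the paper, says precisely that $J_p \cup J_F \cup J_\emptyset$ is realizable. Realizable subsets of $C(n,k+1)$ are the inversion sets of admissible $k$-orders by Theorem~\ref{DGH:thm:Zieg}, and they are closed under complementation because $\Inv(\rho^t) = \Inv(\rho)^c$ by Lemma~\ref{DGH:lem:InvRev}. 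Hence $J_s$ is realizable, and the parenthetical statement for $J_p$ follows symmetrically from the other version of Ziegler's lemma.

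The main obstacle, should one want a self-contained proof of Part~(1) that avoids Ziegler, is constructing an admissible $k$-order $\rho$ with $\Inv(\rho) = J_s$. The most natural approach, concatenating an admissible $J$-order with some admissible order on $J^c$, fails, because the admissibility conditions demanded on the $J^c$-restriction by $J_s$-packets (antilex on $P_X \cap J^c$) are directly opposed to those demanded by $J_p$-packets (lex on $P_X \cap J^c$). The alternative route is to verify the convexity criterion of Lemma~\ref{DGH:lem:Zreal} directly for each triple $\{y_a < y_b < y_c\}$ inside a $(k+2)$-set $Y$; this is feasible but becomes an involved case analysis requiring the realizability of $J$ on the auxiliary packets $P_{\widehat{y}_a}, P_{\widehat{y}_b}, P_{\widehat{y}_c}$, and occasionally on adjacent packets as well. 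The appeal to Ziegler sidesteps all of this.
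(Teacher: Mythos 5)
Your Part~(2) argument is essentially the paper's, just phrased as a direct contradiction: the paper fixes $\widehat{a} \in P_X \cap J_F$ and observes that every $\widehat{a}\widehat{d} \in J$, which prevents any $\widehat{d}$ from being in $J_\emptyset$; you locate the witness $X_1 \cap X_2 = \widehat{a}\widehat{d}$ and read off the two incompatible memberships. Same observation, same proof.

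For Part~(1), you take the short route via Ziegler's Lemma~2.6 and complementation. This is a valid argument, and in fact the paper explicitly acknowledges it --- the sentence immediately following the lemma statement reads ``Part 1 follows immediately from Ziegler's Lemma 2.6, the realizability of $J$, and the fact that the complement of a realizable set is itself realizable.'' The paper then deliberately declines the shortcut and gives a direct convexity check (an extended case analysis on a triple $\{a<b<c\}\subset X$) ``to illustrate how proofs of realizability work in general,'' since that template recurs in Theorem~\ref{DGH:thm:InvReal} and the later lemmas of \S\ref{DGH:Sec:GenBruhat}. One small simplification to your write-up: you don't need to route closure-under-complementation through Theorem~\ref{DGH:thm:Zieg} and Lemma~\ref{DGH:lem:InvRev}; it is immediate from the definition, since $P_X \cap U$ is a prefix of $P_X$ iff $P_X \cap U^c$ is a suffix (cf.\ Remark~\ref{DGH:rmk:comp}). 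Your closing diagnosis of why the ``concatenate two admissible orders'' approach would fail is accurate, and the convexity route you describe as the fallback is exactly what the paper carries out.
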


Part 1 follows immediately from Ziegler's Lemma 2.6, the realizability of $J$, and the fact that the  complement of a realizable set is itself realizable. However, it is worth proving part 1 directly to illustrate how proofs of realizability work in general. We do this by proving that $J_s$ and its complement are convex. In this way, our proof differs from Ziegler's. 
	
\begin{proof}	
	Let $\{a < b < c\} \subseteq X$ for some $X \in C(n,k+2)$. We need to show that the intersection of $J_s$ with $\{\widehat{c},\widehat{b},\widehat{a}\}$ is neither $\{\widehat{c},\widehat{a}\}$ nor $\{\widehat{b}\}$. We now proceed via cases.
	
	\begin{description}
		
		\item[$\widehat{b}\notin J_s$, $\widehat{b} \in J_F$] In this case $\widehat{b}\widehat{k+2} \in J$. This element is contained inside of $P_{\widehat{k+2}}$. It also falls in between $\widehat{c}\widehat{k+2}$ and $\widehat{a}\widehat{k+2}$ in the lexicographic order. As the realizability of $J$ implies that $P_{\widehat{k+2}} \cap J$ is either a suffix or prefix of $P_{\widehat{k+2}}$, either $\widehat{a}\widehat{k+2}$ or $\widehat{c}\widehat{k+2}$ is in $J$ (or $\widehat{k+1}\widehat{k+2} \in J$ if $c = k+2$). These represent the lexicographic minimal elements in $P_{\widehat{a}}$ and $P_{\widehat{c}}$ respectively. Hence, one of $\widehat{a}$ and $\widehat{c}$ is not in $J_s$. 
		
		\item[$\widehat{b}\notin J_s, \widehat{b}\notin J_F$] Based on the realizability of $J$, $\widehat{b} \in J_p \cup J_{\emptyset}$. In either case, $\widehat{1}\widehat{b} \notin J$. Again, the realizability of $J$ applied to $P_{\widehat{1}}$ dictates that one of $\widehat{1}\widehat{c}$ or $\widehat{1}\widehat{a}$ (or $\widehat{1}\widehat{2}$ if $a = 1$) not be in $J$. In either case, one of $\widehat{a}$ or $\widehat{c}$ is not in $J_s$. 
	\end{description}
	We may now assume that $\widehat{b} \in J_s$. This implies that $\widehat{b}\widehat{k+2} \notin J$, while $\widehat{b}\widehat{1} \in J$. Our goal is to show that one or both of $\widehat{a}$ and $\widehat{c}$ are also in $J_s$. We assume to the contrary and proceed via cases. 
	\begin{description}
		
		\item[$\widehat{a},\widehat{c}\in J_F$] By assumption, $\widehat{a}\widehat{k+2}, \widehat{c}\widehat{k+2} \in J$ (or $\widehat{k+1}\widehat{k+2}$ if $c = k+2$). As $\widehat{b}\widehat{k+2}$ falls between $\widehat{c}\widehat{k+2}$ and $\widehat{a}\widehat{k+2}$ in the lexicographic order, the realizability of $J$ implies that $\widehat{b}\widehat{k+2} \in J$ as well. However, this implies that $\widehat{b} \in J_p \cup J_F$, which is a contradiction.

		\item[$\widehat{a},\widehat{c}\notin J_F$] By assumption, $\widehat{a},\widehat{c} \in J_p \cup J_{\emptyset}$. Necessarily, neither $\widehat{1}\widehat{a}$ (or $\widehat{1}\widehat{2}$ if $a = 1$) nor $\widehat{1}\widehat{c}$ are in $J$. However, $\widehat{1}\widehat{b} \in J$. So the intersection of $J$ with $\widehat{1}$ is neither a prefix nor a suffix. This contradicts the realizability of $J$. 
		
		\item[$\widehat{a} \in J_F, \widehat{c}\notin J_F$] We will show that the intersection of $J$ with $\widehat{k+2}$ is neither a prefix nor a suffix. By assumption, $\widehat{c} \in J_p \cup J_{\emptyset}$. Due to the fullness of $\widehat{a}$, $\widehat{a}\widehat{c} \in J \cap \widehat{c}$. Hence $\widehat{c} \in J_p$. Thus $\widehat{c}\widehat{k+2} \in J$ (or $\widehat{k+1}\widehat{k+2}$ if $c = k+2$) and $\widehat{a}\widehat{k+2} \in J$ since $\widehat{a}\in J_F$. However, by assumption, $\widehat{b}\widehat{k+2} \notin J$, which gives the desired contradiction.
		
		\item[$\widehat{c} \in J_F, \widehat{a}\notin J_F$] This plays out similarly to the previous case, but uses $\widehat{1}$ in place of $\widehat{k+2}$.
	
	\end{description}	
	
	This completes the proof of part 1. 
	
	To prove part 2 assume that $\widehat{a} \in P_X \cap J_F$. For any $d \neq a$, we get that $\widehat{a}\widehat{d} \in J$. So $\widehat{d} \notin J_{\emptyset}$ for all $d$.

\end{proof}

\begin{corollary}\label{DGH:cor:Jpreal} If $J$ is a realizable $k$-set, then $J_p$ is a realizable $(k+1)$-set.
\end{corollary}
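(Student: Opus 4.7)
The plan is to derive this corollary directly from Lemma~\ref{DGH:lem:Jreal}(\ref{Jreal:partone}) by passing to the complement. The key observation, already recorded in Remark~\ref{DGH:rmk:comp}, is that the complement $J^c = C(n,k) \setminus J$ of a realizable $k$-set is itself realizable, and the operations $(\cdot)_s$ and $(\cdot)_p$ are swapped under complementation: $(J^c)_p = J_s$ and $(J^c)_s = J_p$. Once this is in hand, the corollary is immediate.

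First, I would verify (or simply cite) Remark~\ref{DGH:rmk:comp}. This is a quick packet-by-packet check: for each $X \in C(n,k+1)$, the intersection $P_X \cap J^c$ is the complement of $P_X \cap J$ inside the totally ordered packet $P_X$, so prefixes become suffixes and vice versa, while $J_F$ and $J_\emptyset$ swap. Consequently $J^c$ is realizable and the four sets $(J^c)_F, (J^c)_\emptyset, (J^c)_s, (J^c)_p$ equal $J_\emptyset, J_F, J_p, J_s$ respectively.

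Now applying Lemma~\ref{DGH:lem:Jreal}(\ref{Jreal:partone}) to the realizable $k$-set $J^c$ yields that $(J^c)_s$ is a realizable $(k+1)$-set. Since $(J^c)_s = J_p$, we conclude that $J_p$ is realizable. There is no real obstacle here, and in particular no need to repeat the casework used in the proof of Lemma~\ref{DGH:lem:Jreal}(\ref{Jreal:partone})---the corollary is a formal consequence of that lemma together with the symmetry between $J$ and $J^c$. The only point worth flagging is that the statement of Remark~\ref{DGH:rmk:comp} should be invoked carefully since $J_p$ and $J_s$ are defined in terms of \emph{proper} prefixes and suffixes; the symmetry still holds because a proper prefix of $P_X$ corresponds under complementation to a proper suffix of $P_X$ (both notions exclude the cases $P_X \cap J = \emptyset$ and $P_X \cap J = P_X$, which are recorded separately by $J_\emptyset$ and $J_F$).
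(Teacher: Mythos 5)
Your proof is correct and follows the same route as the paper: the paper's one-line proof simply cites Remark~\ref{DGH:rmk:comp} for the identity $J_p = (J^c)_s$ and then implicitly applies Lemma~\ref{DGH:lem:Jreal}(\ref{Jreal:partone}) to $J^c$. Your version just spells out the same reduction in more detail, including a careful note on why the proper prefix/suffix bookkeeping is preserved under complementation.
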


\begin{proof}
	Recall that $J_p = (J^c)_s$ (Remark \ref{DGH:rmk:comp}). 
\end{proof}

\begin{lemma}\label{DGH:lem:parts}
	If $J$ is a realizable $k$-set, then the constituent pieces $P_X \cap J_s$, $P_X \cap J_p$, $P_X \cap J_F$, and $P_X \cap J_{\emptyset}$ of any $k+1$-packet can be arranged in one of four ways
	\begin{align}
			\{P_X \cap J_s < P_X \cap J_F < P_X \cap J_p\}\label{parts:sFp}\\
			\{P_X \cap J_p < P_X \cap J_F < P_X \cap J_s\}\label{parts:pFs} \\
			\{P_X \cap J_s < P_X \cap J_{\emptyset} < P_X \cap J_p\}\label{parts:s0p} \\
			\{P_X \cap J_p < P_X \cap J_{\emptyset} < P_X \cap J_s\}\label{parts:p0s}
	\end{align}
	Here the notation $P_X \cap J_s < P_X \cap J_F$ signifies that for every pair $A \in P_X \cap J_s$ and $B \in P_X \cap J_F$, we have $A \leq_{\lex} B$. It is possible that the constituent parts of each arrangement may be empty.
\end{lemma}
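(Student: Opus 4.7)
The plan is to combine the two parts of Lemma \ref{DGH:lem:Jreal} with a short disjointness argument. By Lemma \ref{DGH:lem:Jreal}(\ref{Jreal:partone}), both $J_s$ and $J_p$ are realizable $(k+1)$-sets, so each of $P_X \cap J_s$ and $P_X \cap J_p$ is either a prefix or a suffix of $P_X$ under the lexicographic order. By Lemma \ref{DGH:lem:Jreal}(\ref{Jreal:parttwo}), $P_X$ cannot meet both $J_F$ and $J_{\emptyset}$ non-trivially. So the remaining elements $P_X\setminus(J_s \cup J_p) = (P_X \cap J_F) \cup (P_X \cap J_{\emptyset})$ lie entirely in $J_F$ or entirely in $J_{\emptyset}$.

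The main thing left is to show that when $P_X \cap J_s$ and $P_X \cap J_p$ are both non-empty, one is a prefix and the other is a suffix. Suppose for contradiction that both were prefixes, with $\ell_s = \vert P_X \cap J_s\vert$ and $\ell_p = \vert P_X \cap J_p\vert$ and $\ell_s \leq \ell_p$. Then every element of $P_X \cap J_s$ is one of the first $\ell_s$ elements of $P_X$, and hence also lies in $P_X \cap J_p$, contradicting $J_s \cap J_p = \emptyset$ unless $\ell_s = 0$. The symmetric argument rules out both being suffixes. Hence, assuming non-emptiness, $P_X \cap J_s$ and $P_X \cap J_p$ occupy opposite ends of $P_X$.

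Combining these observations, $P_X$ decomposes into three contiguous blocks in lexicographic order: one end consists of $P_X \cap J_s$, the other end consists of $P_X \cap J_p$, and the middle consists of $P_X \cap J_F$ or $P_X \cap J_{\emptyset}$ (but not both). This yields exactly the four arrangements \eqref{parts:sFp}--\eqref{parts:p0s}, where any individual block may be empty. The main obstacle is really bookkeeping: one must check that the degenerate cases (e.g.\ when $P_X \cap J_s = \emptyset$, so it is both a prefix and a suffix) still fit into one of the four listed configurations, which they trivially do since an empty block is consistent with either end.
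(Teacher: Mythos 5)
Your proof is correct and follows essentially the same route as the paper's: use Lemma \ref{DGH:lem:Jreal}(\ref{Jreal:partone}) to get that $P_X \cap J_s$ and $P_X \cap J_p$ are each a prefix or suffix, use Lemma \ref{DGH:lem:Jreal}(\ref{Jreal:parttwo}) to see the middle is purely $J_F$ or purely $J_\emptyset$, and observe that the two end blocks must lie at opposite ends. You supply a short disjointness argument for that last step, which the paper leaves implicit; otherwise the argument is the same.
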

\begin{proof}
	Let $P_X$ be a $(k+1)$-packet. Realizability of $J$ is equivalent to $J_s \cup J_p \cup J_F \cup J_{\emptyset} = C(n,k+1)$. Thus $P_X = (P_X \cap J_s) \cup (P_X \cap J_p) \cup (P_X \cap J_F) \cup (P_X \cap J_{\emptyset})$.
	
	Because both $J_s$ and $J_p$ are realizable, their intersections with $P_X$ must be a prefix or suffix. If they both intersect non-trivially with $P_X$, then their intersections with $P_X$ must appear on opposite ends of $P_X$. This shows that any $(k+1)$-packet can be arranged as either
	\begin{align}
		\begin{split}
			\{P_X \cap J_s < &\text{ ??} < P_X \cap J_p\} \\
			&\text{or} \\
			\{P_X \cap J_p < &\text{ ??} < P_X \cap J_s\}.
		\end{split}
	\end{align}
	
	According to Lemma \ref{DGH:lem:Jreal} Part \ref{Jreal:parttwo}, the middle ?? segment is either $P_X \cap J_{\emptyset}$ or $P_{X} \cap J_F$. It cannot contain entries from both sets.
\end{proof}

\begin{definition}\label{DGH:def:Seg}
	When we divide a packet $P_X$ into lexicographic intervals based on its intersections with $J_s$, $J_p$, $J_F$, and $J_{\emptyset}$ (as in Lemma \ref{DGH:lem:parts}), we call this a \emph{segmentation} of $P_X$. With the exception of Lemma \ref{DGH:lem:parts}, we typically assume that each interval is non-empty. 
\end{definition}

Lemma \ref{DGH:lem:parts} is particularly useful because it automatically proves that a bevy of combinations of $J_s, J_p, J_{\emptyset}, \text{ and } J_F$ are realizable $(k+1)$-sets. For instance, $J_s \cup J_F$ is realizable. So is $J_s \cup J_F \cup J_{\emptyset}$. However, a set such as $J_s \cup J_F \cup J_p$ may or may not be realizable; it depends upon the existence of a packets with segmentation \eqref{parts:s0p} or and \eqref{parts:p0s}. We include following corollary now because it is an immediate consequence of Lemma \ref{DGH:lem:parts}. However, it will not be used until Section \ref{DGH:ssec:GenRes}.

\begin{corollary}\label{DGH:cor:badsets} Let $J$ be a realizable $k$-set and fix $X \in C(n,k+2)$. Then the following segmentations of $P_X$ are forbidden:
	\begin{align}
		&\{\emptyset < P_X \cap J_F < P_X \cap J_s\}\label{badsets:Fs} \\
		&\{P_X \cap J_p < P_X \cap J_F < \emptyset\}\\
		&\{P_X \cap J_s < P_X \cap J_{\emptyset} < \emptyset\} \\
		&\{\emptyset < P_X \cap J_{\emptyset} < P_X \cap J_p\}
	\end{align}
	For example, the Equation \eqref{badsets:Fs} states that the segmentation in Equation \eqref{parts:pFs} is impossible when $P_X \cap J_p$ is empty, but the other two segments are non-empty. All foure of these forbidden segmentations are associated with those found in Equations \eqref{parts:pFs} and \eqref{parts:s0p}.
\end{corollary}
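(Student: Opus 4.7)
The plan is a direct case analysis unified by one observation. Write $X = \{x_1 < x_2 < \cdots < x_{k+2}\}$, so that $P_X$ in lex order reads $\widehat{x_{k+2}} < \widehat{x_{k+1}} < \cdots < \widehat{x_1}$, with lex-smallest element $\widehat{x_{k+2}} = \{x_1,\ldots,x_{k+1}\}$ and lex-largest element $\widehat{x_1} = \{x_2,\ldots,x_{k+2}\}$. The key point is that the single $k$-set $Y \coloneqq \{x_2,\ldots,x_{k+1}\}$ sits inside both $P_{\widehat{x_{k+2}}}$ as its lex-largest member and inside $P_{\widehat{x_1}}$ as its lex-smallest member. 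Therefore whether $Y \in J$ is forced independently by which of $J_s, J_p, J_F, J_\emptyset$ contains $\widehat{x_{k+2}}$ and by which contains $\widehat{x_1}$, and a forbidden segmentation should turn out to be exactly one where these two verdicts disagree.

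Unpacking the definitions produces a short dictionary. Membership of $\widehat{x_{k+2}}$ in $J_F$ or $J_s$ forces $Y \in J$ (the lex-largest element of $P_{\widehat{x_{k+2}}}$ always lies in a full packet and in any nonempty proper suffix), while membership in $J_p$ or $J_\emptyset$ forces $Y \notin J$. Dually, membership of $\widehat{x_1}$ in $J_F$ or $J_p$ forces $Y \in J$ (its lex-smallest element always lies in a full packet and in any nonempty proper prefix), while membership in $J_s$ or $J_\emptyset$ forces $Y \notin J$.

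With this dictionary each forbidden case becomes a one-line contradiction. For \eqref{badsets:Fs}, the hypotheses $P_X \cap J_p = P_X \cap J_\emptyset = \emptyset$ together with the nonemptiness of both the $J_F$ and $J_s$ parts pin down $\widehat{x_{k+2}} \in J_F$ and $\widehat{x_1} \in J_s$, giving $Y \in J$ and $Y \notin J$ simultaneously. The three remaining segmentations are handled identically, via the extremal pairs $(\widehat{x_{k+2}},\widehat{x_1}) \in J_p \times J_F$, $J_s \times J_\emptyset$, and $J_\emptyset \times J_p$ respectively, each of which the dictionary shows delivers opposite verdicts on $Y$. I expect no real obstacle beyond setting up the dictionary; the whole content of the corollary is this contrast, and the asymmetry with \eqref{parts:sFp} and \eqref{parts:p0s} (whose analogous one-side-empty reductions are \emph{not} forbidden) is explained by the fact that in those cases the corresponding extremal pairs produce \emph{consistent} verdicts on $Y$.
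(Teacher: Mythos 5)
Your proposal is correct and matches the paper's argument: the paper also isolates the $k$-set $Y = \widehat{x_1}\widehat{x_{k+2}}$, observes it is the lex-minimal member of $P_{\widehat{x_1}}$ (and implicitly the lex-maximal member of $P_{\widehat{x_{k+2}}}$), and derives the contradiction from the incompatible verdicts on whether $Y \in J$. The only difference is presentational: the paper proves the first case explicitly and dispatches the other three as "similar or by replacing $J$ with $J^c$," while your dictionary treats all four uniformly.
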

\begin{proof}
	We shall only prove that the first segmentation is impossible. The other cases are similar or come from replacing $J$ with $J^c$.
	
	If $\widehat{k+2} \in J_F$, then $\widehat{1}\widehat{k+2} \in J$. This is the lexicographical minimum element in $P_{\widehat{1}}$. Therefore $\widehat{1} \in J_F \cup J_p$. 
\end{proof}

\begin{theorem}\label{DGH:thm:InvReal}
	Let $\rho$ be an admissible $J$-order. Then $\Inv(\rho)$ is a realizable $(k+1)$-set.
\end{theorem}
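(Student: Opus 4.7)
My plan is to reduce realizability of $\Inv(\rho)$ to Ziegler's realizability for full admissible $k$-orders (Theorem~\ref{DGH:thm:Zieg}) by extending $\rho$ to a well-chosen admissible $k$-order $\tilde\rho \in A(n,k)$ satisfying $\tilde\rho|_J = \rho$ with $J$ as a prefix of $\tilde\rho$. Once such an extension is in hand, realizability of $\Inv(\tilde\rho)$ is automatic, and the task becomes extracting the correct prefix/suffix structure of $\Inv(\rho)$ inside each $(k+1)$-packet from that of $\Inv(\tilde\rho)$.

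To build $\tilde\rho$, apply Lemma~\ref{DGH:lem:Zequiv} to the realizable $k$-set $J$ to obtain some $\tilde\rho_0 \in A(n,k)$ with $J$ as a prefix, set $\tau := \tilde\rho_0|_{J^c}$, and define $\tilde\rho := \rho \cdot \tau$ (first $\rho$ on $J$, then $\tau$ on $J^c$). I would then verify admissibility packet-by-packet. For $X \in J_F$ we have $\tilde\rho|_{P_X} = \rho|_{P_X}$, which is lex or antilex by admissibility of $\rho$. For $X \in J_\emptyset$ we have $\tilde\rho|_{P_X} = \tilde\rho_0|_{P_X}$, lex or antilex by admissibility of $\tilde\rho_0$. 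For $X \in J_s$: the set $P_X \cap J$ is a proper lex-suffix of $P_X$ on which $\rho$ is antilex, and $\tilde\rho_0$ is antilex on all of $P_X$ (since $J$ being a prefix of $\tilde\rho_0$ forces $J_s \subseteq \Inv(\tilde\rho_0)$ via Lemma~\ref{DGH:lem:Zequiv}), so $\tau$ is antilex on the lex-prefix $P_X \cap J^c$; concatenating gives antilex on $P_X$. The case $X \in J_p$ is symmetric and yields lex on $P_X$. The essential point permitting the ``swap'' of $\tilde\rho_0|_J$ for $\rho$ is that the constraints on $\tau|_{P_X \cap J^c}$ depend only on the segmentation of $P_X$ (Lemma~\ref{DGH:lem:parts}), not on $\tilde\rho_0|_J$.

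Comparing inversion sets packet-by-packet yields $\Inv(\rho) = \Inv(\tilde\rho) \setminus J_\emptyset$: for $X \in J_s$ both contain $X$; for $X \in J_p$ neither does; for $X \in J_F$ the identity $\tilde\rho|_{P_X} = \rho|_{P_X}$ forces agreement; for $X \in J_\emptyset$ only $\Inv(\tilde\rho)$ may contain $X$, while $\Inv(\rho) \subseteq J_s \cup J_F$ by definition. To finish, fix $Y \in C(n,k+2)$ and show $P_Y \cap \Inv(\rho)$ is a prefix or suffix of $P_Y$. The crucial tool is Lemma~\ref{DGH:lem:Jreal}(2), which forbids a single $(k+2)$-packet from meeting both $J_F$ and $J_\emptyset$. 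If $P_Y \cap J_F = \emptyset$, then $P_Y \cap \Inv(\rho) = P_Y \cap J_s$, a prefix or suffix by realizability of $J_s$ (Lemma~\ref{DGH:lem:Jreal}(1)). If $P_Y \cap J_\emptyset = \emptyset$, subtracting $J_\emptyset$ changes nothing inside $P_Y$, so $P_Y \cap \Inv(\rho) = P_Y \cap \Inv(\tilde\rho)$, a prefix or suffix by realizability of $\Inv(\tilde\rho)$.

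The main technical step is the admissibility check on the ``boundary'' packets $X \in J_s \cup J_p$, where one must combine the antilex/lex behavior of $\rho$ on $P_X \cap J$ with that of $\tilde\rho_0$ on $P_X \cap J^c$ to get antilex/lex on all of $P_X$. Once the extension $\tilde\rho$ is produced, the rest is bookkeeping driven by Lemma~\ref{DGH:lem:Jreal} and the already-established realizability of $J_s$ and of $\Inv(\tilde\rho)$.
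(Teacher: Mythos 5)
Your proof is correct, and it takes a genuinely different route from the paper. The paper proves Theorem~\ref{DGH:thm:InvReal} by a direct convexity check on $\Inv(\rho)$: for $\{a<b<c\}\subset X$ in a $(k+2)$-set, it shows through casework on which of $J_s$, $J_p$, $J_F$, $J_\emptyset$ contain $\widehat a,\widehat b,\widehat c$ that the ``bad'' convexity configurations force all three pairs $\widehat a\widehat b, \widehat a\widehat c, \widehat b\widehat c$ into $J$, whereupon Ziegler's original transitivity contradiction applies. You instead extend $\rho$ to a global admissible $k$-order $\tilde\rho = \rho\cdot\tau$ (with $\tau$ the restriction to $J^c$ of an order from Lemma~\ref{DGH:lem:Zequiv}), verify admissibility packet-by-packet using the segmentation of $J$, observe $\Inv(\rho)=\Inv(\tilde\rho)\setminus J_\emptyset$, and then close the argument with Lemma~\ref{DGH:lem:Jreal}(2): since no $(k+2)$-packet $P_Y$ meets both $J_F$ and $J_\emptyset$, either $P_Y\cap\Inv(\rho)=P_Y\cap\Inv(\tilde\rho)$ (realizable by Theorem~\ref{DGH:thm:Zieg}) or $P_Y\cap\Inv(\rho)=P_Y\cap J_s$ (realizable by Lemma~\ref{DGH:lem:Jreal}(1)). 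Your extension construction anticipates what the paper later does in Theorem~\ref{DGH:thm:AdmPathCorr} and Lemma~\ref{DGH:lem:OextA}, but you build it in a self-contained way that does not depend on Theorem~\ref{DGH:thm:InvReal} itself, so the dependencies are consistent despite being re-ordered relative to the paper. Your route buys modularity and avoids Ziegler's transitivity manipulations, offloading them onto the already-established realizability of $\Inv(\tilde\rho)$; the paper's approach is more elementary but requires more case analysis at the packet level. One small note: your appeal to Lemma~\ref{DGH:lem:parts} in justifying the swap is not really load-bearing — what you actually use is just realizability of $J$ (each $k$-packet meets $J$ in a prefix or suffix) together with the definition of admissible $J$-order — but this does not affect the correctness of the argument.
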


\begin{proof}
	As in the proof of Lemma $\ref{DGH:lem:Jreal}.\ref{Jreal:partone}$ we will exploit Lemma $\ref{DGH:lem:SSrel}$ and instead prove that $\Inv(\rho)$ and its complement are convex. Suppose that $\{a < b < c\} \subset X$ for some $X \in C(n,k+2)$. 
	Let us recall Ziegler's proof when $J = C(n,k)$\cite[Lemma 2.4]{DGH:Z1993}. In that case, we have the following
	\begin{equation}\label{InvReal:Asmpt}
		\widehat{a}\widehat{b}, \widehat{a}\widehat{c},\widehat{b}\widehat{c} \in J.
	\end{equation} 
	If 
	\begin{equation}\label{InvReal:bout}\tag{Case 1}
		\begin{array}{ccc}
			\widehat{c} \in \Inv(\rho),& \widehat{b} \notin \Inv(\rho) ,& \widehat{a} \in \Inv(\rho)
	\end{array}\end{equation}
	then we may conclude that 
	\begin{equation}\label{InvRel:Con}
		\begin{split}
			\widehat{a}\widehat{c} <_{\rho} \widehat{b}\widehat{c},\hspace{.1cm} &\hspace{.25cm} \widehat{a}\widehat{b} >_{\rho} \widehat{b}\widehat{c},\\
			\widehat{a}\widehat{b} &<_{\rho} \widehat{a}\widehat{c}
		\end{split}
	\end{equation}
	which is a contradiction. A similar contradiction occurs if 
	\begin{equation}\label{InvReal:bin}\tag{Case 2}
	\widehat{a},\widehat{c} \notin \Inv(\rho), \widehat{b} \in \Inv(\rho).
	\end{equation}
	
	Unfortunately, when $J \neq C(n,k)$, it is no longer clear that \eqref{InvReal:Asmpt} holds. Below we show that it does in fact hold in both cases, unless a more immediate contradiction is reached.
	\begin{description}
		
		\item[Case 1] In this case both $\widehat{a},\widehat{c} \in J_s \cup J_F$. By realizability, $\widehat{b} \in J_s \cup J_F$ as well. As $J_s \subseteq \Inv(\rho)$ and $\widehat{b} \notin \Inv(\rho)$, we know that $\widehat{b} \in J_F$. Because $\widehat{c} \leq_{\lex} \widehat{b} \leq_{\lex} \widehat{a}$, Lemma \ref{DGH:lem:parts} implies that one or both of $\widehat{a}$ and $\widehat{c}$ are in $J_F$. Suppose $\widehat{a} \in J_F$. Then 
		\begin{equation*}
			\begin{array}{ccc}
				\widehat{a}\sdiff{c} = \widehat{a}\widehat{c}, & \widehat{b}\sdiff{a} = \widehat{a}\widehat{b}, & \widehat{b}\sdiff{c} = \widehat{b}\widehat{c}
			\end{array}
		\end{equation*}
		are all in $J$. The same is true when $\widehat{c}\in J_F$. 
		
		\item[Case 2] In this case, $\widehat{b} \in J_s \cup J_F$ and $\widehat{a},\widehat{c} \notin J_s$. In fact, $\widehat{b} \in J_F$. Otherwise, the realizability of $J_s$ would imply that one or both of $\widehat{a}$ and $\widehat{c}$ was in $J_s$. Because $\widehat{b}\in J_F$ we know that $\widehat{a}\widehat{b}, \widehat{b}\widehat{c} \in J$.
		
		Moreover because $\widehat{b} \in J_F$, we can again use Lemma \ref{DGH:lem:parts} to conclude that at least one of $\widehat{a}$ and $\widehat{c}$ is in $J_F$. Either way, it follows that $\widehat{a}\widehat{c} \in J$, and \eqref{InvReal:Asmpt} holds. 	
	\end{description}
\end{proof}

\begin{theorem}\label{DGH:thm:AdmPathCorr}
	When $k > 1$, there is a bijection between $A_J(n,k)$ and $\paths{\emptyset}{J}{k-1}$. 
\end{theorem}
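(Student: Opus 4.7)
The plan is to construct a map $\Phi : \paths{\emptyset}{J}{k-1} \to A_J(n,k)$ sending a path to its induced total order on $J$, and show it is a bijection. For any path $\gamma$, each cover $J_{i-1} \sqsubset J_i$ adds a single element to the previous realizable $k$-set, so $\gamma$ determines a linear enumeration of $J$. By Lemma \ref{DGH:lem:0JPaths}, this total order is admissible, so $\Phi$ is well-defined. Injectivity is immediate: the order $\Phi(\gamma)$ lists the cover relations of $\gamma$ in succession, and thus $\gamma$ can be recovered from $\Phi(\gamma)$ by taking $\rho$-prefixes.

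The substantive work lies in surjectivity. Given $\rho \in A_J(n,k)$, I would write $J = \{j_1 <_\rho \cdots <_\rho j_m\}$ and set $J_i \coloneqq \{j_1, \ldots, j_i\}$. To show that $\emptyset = J_0 \sqsubset J_1 \sqsubset \cdots \sqsubset J_m = J$ is a path in $B(n,k-1)$, it suffices by Theorem \ref{DGH:thm:Zieg} to prove that each $J_i$ is a realizable $k$-set: for every $X \in C(n,k+1)$, the intersection $J_i \cap P_X$ should be a prefix or suffix of $P_X$. Once this is established, the chain $J_0 \sqsubset \cdots \sqsubset J_m$ is the desired path, and $\Phi$ sends it to $\rho$ by construction.

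The verification proceeds by cases on which of $J_s, J_p, J_F, J_{\emptyset}$ contains $X$, using the partition of $C(n,k+1)$ guaranteed by realizability of $J$. When $X \in J_{\emptyset}$, the intersection $J_i \cap P_X$ is empty. When $X \in J_F$, Definition \ref{DGH:def:admJ} forces $\rho\vert_{P_X}$ to be either lex or antilex, so $J_i \cap P_X$ is the corresponding initial prefix or suffix of $P_X$. When $X \in J_s$, the intersection $P_X \cap J$ is already a lex-suffix of $P_X$ on which $\rho$ restricts to antilex; hence the elements of $P_X \cap J$ are inserted into the $J_i$ in top-down lex order, so at each stage $J_i \cap P_X$ is a suffix of $P_X \cap J$, and therefore a suffix of $P_X$. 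The case $X \in J_p$ is symmetric.

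The only obstacle is orchestrating the packet-level analysis cleanly, and the key observation is that the three admissibility clauses of Definition \ref{DGH:def:admJ} are designed precisely so that every $\rho$-prefix of $J$ remains a prefix or suffix of each packet. In other words, admissibility on $J$ is exactly the local condition that keeps all intermediate $J_i$ realizable, which is why $\Phi$ lands bijectively in $\paths{\emptyset}{J}{k-1}$.
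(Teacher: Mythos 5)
Your proof is correct but takes a genuinely different, and arguably more elementary, route than the paper. The paper constructs the inverse map by first invoking Theorem~\ref{DGH:thm:InvReal} to show that $\Inv(\rho)$ is a realizable $(k+1)$-set, then applying Ziegler's correspondence (Theorem~\ref{DGH:thm:Zieg}) and Lemma~\ref{DGH:lem:Zequiv} to manufacture a full admissible $k$-order $\tilde\rho$ on $C(n,k)$ with $J$ as a prefix and $\tilde\rho\vert_J = \rho$, so that $\rho$ corresponds to the initial segment of the associated maximal chain in $B(n,k-1)$. You instead prove surjectivity directly: define the $\rho$-prefixes $J_i$ of $J$ and verify, packet by packet, that each $J_i$ is realizable, splitting into the four cases $X \in J_s, J_p, J_F, J_\emptyset$. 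This is exactly the natural generalization of Lemma~\ref{DGH:lem:AdmPre} from admissible $k$-orders on $C(n,k)$ to admissible $J$-orders, and it avoids Theorem~\ref{DGH:thm:InvReal} entirely. Your version is lighter and more self-contained; the paper's version buys, as a byproduct, an explicit extension of $\rho$ to an admissible $k$-order on all of $C(n,k)$ with $J$ as a prefix and the same inversion set, which is the content the paper reuses in Corollary~\ref{DGH:cor:Lifting} and Lemma~\ref{DGH:lem:ExtLem}. One small point worth spelling out in your case analysis for $X \in J_s$ (and dually $J_p$): the admissibility clause ``$\rho$ induces antilex on $P_X$'' means antilex on $P_X \cap J$, since $P_X \not\subseteq J$ there; you use this correctly but it deserves a word since the set being ordered is strictly smaller than the packet.
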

\begin{proof}
	Let $\rho \in A_J(n,k)$ be an admissible order on $J$. By Theorem \ref{DGH:thm:InvReal}, $\Inv(\rho)$ is a realizable $(k+1)$-set. By Theorem \ref{DGH:thm:Zieg}, there exists $\tilde{\rho} \in A(n,k)$ with $\Inv(\tilde{\rho}) = \Inv(\rho)$. By design $J_s \subseteq \Inv(\rho)$ and $J_p \subset \Inv(\rho)^c$. Therefore $J_s \subseteq \Inv(\tilde{\rho})$ and $J_p \subseteq \Inv(\tilde{\rho})$. By Lemma \ref{DGH:lem:Zequiv}, altering $\tilde{\rho}$ by a sequence of elementary equivalences, we can assume that $J$ is a prefix of $\tilde{\rho}$.
	
	We claim that, up to a sequence of elementary equivalences, $\tilde{\rho}\vert_{J} = \rho$. The argument is similar to that in Lemma \ref{DGH:lem:Zequiv}. Look at the set of out-of-order pairs $Y \coloneqq \{(x,y) \in J \mid \tilde{\rho} \text{ and } \rho \text{ induce the opposite order on } x \text{ and } y\}$. Since $\Inv(\tilde{\rho}) = \Inv(\rho)$, any such pair $(x,y)$ cannot be in a shared packet. Furthermore, at least one such pair must be consecutive under the ordering of $\rho$. Otherwise, transitivity would imply that the orders agree. So we can swap this pair and then use induction on the size of $Y$ to prove the claim.
	
	We can assume that $\tilde{\rho}\vert_{J} = \rho$. As an admissible $k$-order with prefix $J$, $\tilde{\rho}$ corresponds to a maximal chain in $B(n,k-1)$ which passes through $J$. Map $\rho$ to the path from $\emptyset$ to $J$ taken by $\tilde{\rho}$. 
	
	The inverse map is contained in Lemma \ref{DGH:lem:0JPaths}. Just map any path to its induced admissible $J$-order.
\end{proof}

\begin{notation}
	In light of Theorem \ref{DGH:thm:AdmPathCorr}, we will use $A_J(n,k)$ and $\paths{\emptyset}{J}{k-1}$ interchangeably depending upon what we want to emphasize.
\end{notation}

\begin{corollary}\label{DGH:cor:Lifting} Let $r \in \epaths{\emptyset}{J}{k-1}$. Then there exists an $\tilde{r} \in B(n,k)$ with $\Inv(r) = \Inv(\tilde{r})$.
\end{corollary}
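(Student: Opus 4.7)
The plan is to chain together two results already established in the excerpt, so almost no new work is required. Given $r \in \epaths{\emptyset}{J}{k-1}$, I would pick any representative path $\gamma \in \paths{\emptyset}{J}{k-1}$ of $r$. By Theorem \ref{DGH:thm:AdmPathCorr}, $\gamma$ corresponds under the bijection to an admissible $J$-order $\rho \in A_J(n,k)$ whose inversion set agrees with $\Inv(\gamma) = \Inv(r)$.

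Next, I would invoke Theorem \ref{DGH:thm:InvReal} to conclude that $\Inv(\rho)$ is a realizable $(k+1)$-set, hence $\Inv(r) \subseteq C(n,k+1)$ is realizable. Finally, apply Ziegler's isomorphism (Theorem \ref{DGH:thm:Zieg}), which asserts that the map $\tilde{r} \mapsto \Inv(\tilde{r})$ is a bijection between $B(n,k)$ and the collection of realizable subsets of $C(n,k+1)$. This produces (in fact uniquely) an element $\tilde{r} \in B(n,k)$ whose inversion set equals $\Inv(r)$, as desired.

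There is essentially no obstacle here; the corollary is a packaging of the realizability theorem with Ziegler's theorem. The only things to verify are that the $\Inv$ on $\epaths{\emptyset}{J}{k-1}$ used in the statement is the same set-valued function that appears in Theorem \ref{DGH:thm:InvReal} (which is immediate from the definition), and that one may freely choose a representative $\gamma \in r$ since $\Inv$ descends to the quotient by elementary equivalences (noted in the discussion just after Definition \ref{DGH:def:EquivPaths}). Both are already in hand, so the proof is a two-step citation.
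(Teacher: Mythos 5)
Your proposal is correct and matches the paper's proof exactly: the paper also cites Theorem \ref{DGH:thm:InvReal} to get realizability of $\Inv(r)$ and then Theorem \ref{DGH:thm:Zieg} to produce $\tilde{r} \in B(n,k)$ with the same inversion set. The extra bookkeeping you mention (choosing a representative and invoking Theorem \ref{DGH:thm:AdmPathCorr}) is harmless but unnecessary, since $\Inv$ is already defined directly on paths and descends to the quotient.
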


\begin{proof}
	By Theorem \ref{DGH:thm:InvReal}, $\Inv(r)$ is a realizable $(k+1)$-set. By Theorem \ref{DGH:thm:Zieg}, there exists $\tilde{r} \in B(n,k)$ with $\Inv(\tilde{r}) = \Inv(r)$. 
\end{proof}

\subsection{Generalizing Ziegler's isomorphism}

We conclude this section by generalizing Theorem \ref{DGH:thm:Zieg} to the poset $\epaths{\emptyset}{J}{k-1}$. When we set $J = C(n,k)$, we recover Ziegler's original theorem \cite[Theorem 4.1B]{DGH:Z1993}.

\begin{theorem}\label{DGH:thm:PathZEquiv} Let $1 < k \leq n$. There is a natural isomorphism of posets between 
		\begin{enumerate}
	
		\item the poset $\epaths{\emptyset}{J}{k-1}$ equipped with $\leq_{\MS}$ and,
		
		\item the set of all realizable $(k+1)$-sets between $J_s$ and $J_s \cup J_F$, ordered by single step inclusion.
	
	\end{enumerate}
The isomorphism is given by the map $r \mapsto \Inv(r)$ for any $r \in \epaths{\emptyset}{J}{k-1}$.
\end{theorem}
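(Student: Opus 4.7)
The plan is to establish the bijection first and then verify it is order-preserving in both directions. Well-definedness is immediate since elementary equivalences preserve inversion sets, and the image lies in realizable $(k+1)$-sets satisfying $J_s \subseteq \Inv(r) \subseteq J_s \cup J_F$ by Definition \ref{DGH:def:InvSetsPath} together with Theorem \ref{DGH:thm:InvReal}. Injectivity mirrors the argument in Theorem \ref{DGH:thm:AdmPathCorr}: if $\rho, \rho' \in A_J(n,k)$ share an inversion set, then any pair in $J$ on which they disagree cannot share a $k$-packet, so at least one such disagreeing pair is consecutive in $\rho$ and can be resolved by an elementary equivalence; iterating closes the argument. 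For surjectivity, fix a realizable $K$ with $J_s \subseteq K \subseteq J_s \cup J_F$. Theorem \ref{DGH:thm:Zieg} produces $\tilde{r} \in B(n,k)$ with $\Inv(\tilde{r}) = K$; since $J_s \subseteq K$ and $K \cap J_p = \emptyset$ (the four pieces $J_s,J_p,J_F,J_\emptyset$ being pairwise disjoint by Definition \ref{DGH:def:JsJpJFJ0}), Lemma \ref{DGH:lem:Zequiv} yields a representative $\tilde{\rho}$ with $J$ as a prefix. Restricting $\tilde{\rho}$ to $J$ produces an admissible $J$-order whose inversion set is $J_s \cup (K \cap J_F) = K$.

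For the order-preservation, the direction $\leq_{\MS} \Rightarrow \leq_{\SSO}$ is immediate from the path-version of Lemma \ref{DGH:lem:AdmFlip}: a packet flip $\flipup{r}{r'}{X}$ satisfies $\Inv(r') = \Inv(r) \cup \{X\}$, hence $\Inv(r) \sqsubset \Inv(r')$. For the converse it suffices to handle a single covering relation $K \sqsubset K'$ between realizable sets in $[J_s, J_s \cup J_F]$. Then $K' \setminus K = \{X\}$ with $X \in J_F$, so $P_X \subseteq J$. Lifting $r$ to $\tilde{r} \in B(n,k)$ via Corollary \ref{DGH:cor:Lifting}, Remark \ref{DGH:rmk:Ziegthm} tells us $P_X$ is flippable in $\tilde{r}$. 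Lemma \ref{DGH:lem:Zequiv} lets us pick a representative $\tilde{\rho}$ with $J$ as a prefix, and after further elementary equivalences inside $J$ (whose compatibility with the flip is handled by Lemma \ref{DGH:lem:FlipR}) we may arrange $P_X$ to be a chain. Since $P_X \subseteq J$, this chain sits inside the prefix, so flipping it produces an order $\tilde{\rho}'$ that still has $J$ as a prefix. Restricting $\tilde{\rho}'$ to $J$ gives the desired $r'$ with $\flipup{r}{r'}{X}$ in $\epaths{\emptyset}{J}{k-1}$, and iterating this step along any chain of single-step inclusions recovers the $\leq_{\MS}$-relation.

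The main obstacle is the converse direction of order-preservation: we must ensure that the flip of $P_X$ occurring in $B(n,k)$ descends to a packet flip acting inside the prefix $\emptyset \to J$ rather than one taking place past $J$ in the full maximal chain. The restriction $K, K' \subseteq J_s \cup J_F$ is exactly what forces this, for a new element falling in $J_\emptyset$ would correspond to a flip occurring entirely in the suffix beyond $J$ and would have no meaning as an operation on admissible $J$-orders. The constraint $X \in J_F$ pins the flip inside $J$, while Lemma \ref{DGH:lem:FlipR} removes any dependence on the choice of representative used to simultaneously expose $J$ as a prefix and $P_X$ as a chain, which is the most delicate point to verify carefully.
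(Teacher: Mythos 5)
Your proposal follows the same overall architecture as the paper's proof (bijection, then order-preservation in both directions), and the bijection portion is sound. Your injectivity argument is slightly different: you argue directly on $J$-orders with a common inversion set, paralleling the argument embedded in Theorem \ref{DGH:thm:AdmPathCorr}, whereas the paper's proof of this theorem extends representatives by a common suffix $\tau$ to maximal chains and invokes the injectivity of $\Inv$ on $B(n,k)$ via Equation \eqref{DGH:eq:GammaInv}. Both work. Your forward order-preservation direction is cleaner: you observe that a packet flip adds one element to the inversion set and immediately gives a covering relation in $\leq_{\SSO}$, avoiding the paper's lift to $B(n,k)$.

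There is, however, a genuine gap in your converse direction, and it is precisely the point you flag as ``the most delicate.'' You need a representative of $\tilde{r}$ in which $J$ is a prefix \emph{and} $P_X$ forms a chain, and then you restrict to $J$. You say Lemma \ref{DGH:lem:FlipR} ``removes any dependence on the choice of representative used to simultaneously expose $J$ as a prefix and $P_X$ as a chain.'' But Lemma \ref{DGH:lem:FlipR} only states that $p_X$ is well-defined once $X$ is known to be flippable in a given equivalence class; it says nothing about the \emph{existence} of a representative with both of the properties you need. After using Corollary \ref{DGH:cor:Zequiv} to expose $J$ as a prefix, the elementary equivalences available ``inside $J$'' may not suffice to bring $P_X$ together, since the witness for $X \in N(\tilde{r})$ may be an admissible $k$-order that does not have $J$ as a prefix, and the swaps relating it to your chosen $\tilde{\rho}$ may interleave $J$-elements with non-$J$-elements.

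The fix is to \emph{not} start from a representative with $J$ as a prefix. Instead, take any $\sigma \in \tilde{r}$ with $P_X$ a chain (which exists because $X \in N(\tilde{r})$ by Remark \ref{DGH:rmk:Ziegthm}) and restrict $\sigma$ directly to $J$. The total order $\sigma\vert_J$ is an admissible $J$-order regardless of whether $J$ is a prefix: for $Y \in J_F$ the induced order on $P_Y$ is lex or antilex since $\sigma$ is admissible; for $Y \in J_s$ we have $Y \in \Inv(\sigma)$ because $J_s \subseteq \Inv(\sigma)$, hence the restriction is antilex on $P_Y \cap J$; and symmetrically for $Y \in J_p$. One further checks that $\Inv(\sigma\vert_J) = J_s \cup (J_F \cap \Inv(\sigma)) = \Inv(\sigma)$, so $\sigma\vert_J$ represents $r$. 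Since $P_X \subseteq J$ is contiguous in $\sigma$ and restriction to $J$ preserves relative order, $P_X$ is still a chain in $\sigma\vert_J$, so $X \in N(r)$ and $p_X(r)$ is defined. This bypasses the prefix-plus-chain issue entirely, and together with injectivity yields $r' = p_X(r)$, so the covering relation descends.

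For completeness, note that the paper's own treatment of this step is also brief: it asserts that because $\rho_1$ and $\rho_2$ differ only by flips from $J_F$ and elementary equivalences, the restrictions $\gamma_1$ and $\gamma_2$ do as well. That claim requires an argument of roughly the above flavor applied to each intermediate order in the chain; your restatement of the difficulty is actually closer to acknowledging what is at stake.
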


\begin{proof}

	We first show that the mapping $r \mapsto \Inv(r)$ is injective. This is equivalent to showing that each equivalence class in $\epaths{\emptyset}{J}{k-1}$ is determined by its inversion set. Take $r, r' \in \epaths{\emptyset}{J}{k-1}$ with $r \neq r'$. Fix representatives $\gamma_{r} \in r$ and $\gamma_{r'} \in r'$. Both $\gamma_r$ and $\gamma_{r'}$ are paths from the $\emptyset$ to $J$ in $B(n,k-1)$. Pick a path $\tau$ in $B(n,k-1)$ from $J$ to $C(n,k)$. By concatenating $\tau$ on top of $\gamma_r$ and $\gamma_{r'}$ we create two paths from $\emptyset$ to $C(n,k)$ in $B(n,k-1)$. Label these new paths as $\tau \circ \gamma_{r}$ and $\tau \circ \gamma_{r'}$ respectively. 
	
	Consider the $(k+1)$-set\footnote{Recall Remark \ref{DGH:rmk:InvTop}} $\Inv(\tau)_{\emptyset} \coloneqq J_{\emptyset} \cap \Inv(\tau)$. We claim that 
	\begin{align}\label{DGH:eq:GammaInv}
		\Inv(\tau \circ \gamma_r) &= \Inv(\gamma_r) \cup \Inv(\tau)_{\emptyset} \\
		\Inv(\tau \circ \gamma_{r'}) &= \Inv(\gamma_{r'}) \cup \Inv(\tau)_{\emptyset}
	\end{align}
	It is clear that $\Inv(\gamma_{r}) \subseteq \Inv(\tau\circ \gamma_r)$. As $J$ is realizable, we know that $J_s \cup J_p \cup J_F \cup J_{\emptyset} = C(n,k+1)$. The order induced by $\tau \circ \gamma_r$ on any $k$-packet generated by $X \in J_s \cup J_p \cup J_F$ is determined by the order induced by $\gamma_r$. So any additions to the $\Inv(\tau \circ \gamma_r)$ must come from $\tau$ inducing the antilexicographic order on a $k$-packet generated by a $Y \in J_{\emptyset}$. The same holds for $\gamma_{r'}$ and $\tau \circ \gamma_{r'}$.
	
	Under the Manin-Schechtman correspondence (Theorem \ref{DGH:thm:MSthm}(\ref{MSthm:MSCor})), we can interpret $\tau \circ \gamma_r$ and $\tau \circ \gamma_{r'}$ as admissible $k$-orders. Because $\gamma_r$ and $\gamma_{r'}$ are not equivalent to each other, neither are $\tau \circ \gamma_r$ and $\tau \circ \gamma_{r'}$. From Theorem \ref{DGH:thm:Zieg}, we conclude that $\Inv(\tau\circ\gamma_r) \neq \Inv(\tau\circ\gamma_{r'})$. In light of \eqref{DGH:eq:GammaInv}, this can only occur if $\Inv(\gamma_r) \neq \Inv(\gamma_{r'})$. Hence $\Inv(r) = \Inv(\gamma_r) \neq \Inv(\gamma_{r'}) = \Inv(r')$ as desired.
	
	The mapping $r\mapsto \Inv(r)$ is also surjective. Take any realizable $(k+1)$-set $U$ for which $J_s \subseteq U \subseteq J_s \cup J_F$. According to Theorem \ref{DGH:thm:Zieg}, it must be the inversion set of some $\tilde{r}\in B(n,k)$. Because of Lemma \ref{DGH:cor:Zequiv}, there must be some admissible $k$-order $\rho \in \tilde{r}$ for which $J$ is a prefix. Again using the Manin-Schechtman correspondence, $\rho$ corresponds to a maximal chain in $B(n,k-1)$ which passes through $J$. If $\gamma \in \epaths{\emptyset}{J}{k-1}$ is the path taken by this maximal chain up to $J$, then $\Inv(\gamma) = U$. We get equality because $U$ contains no elements of $J_{\emptyset}$.
	
	All that is left is to show that $r \leq_{\MS} r'$ if and only if $\Inv(r) \leq_{\SSO} \Inv(r')$ for any pair $r, r' \in \epaths{\emptyset}{J}{k-1}$. Let us begin by assuming that $r \leq_{\MS} r'$. By Corollary \ref{DGH:cor:Lifting}, there exist $\tilde{r}, \tilde{r'} \in B(n,k)$ for which $\Inv(\tilde{r}) = \Inv(r)$ and $\Inv(\tilde{r'}) = \Inv(r')$. Furthermore, Lemma \ref{DGH:lem:Zequiv} allows us to select representatives $\rho \in \tilde{r}$ and $\rho' \in \tilde{r'}$ for which $J$ is a prefix. Again, we may view $\rho$ and $\rho'$ as paths in $B(n,k-1)$ from $\emptyset$ to $C(n,k)$ which pass through $J$. ``Decompose'' $\rho$ into a path $\gamma$ from $\emptyset \rightarrow J$ and $\tau$ from $J \rightarrow C(n,k)$, $\rho = \tau \circ \gamma$. Likewise, let $\rho' = \tau' \circ \gamma'$. 
	
	By definition, $\Inv(r)$ and $\Inv(r')$ differ by some subset of $J_F$. Recalling \eqref{DGH:eq:GammaInv}, we conclude that $\Inv(\tau)_{\emptyset} = \Inv(\tau') = \emptyset$, and $\Inv(r) = \Inv(\gamma_r)$ as well as $\Inv(r') = \Inv(\gamma_{r'})$. Thus $\gamma \in r$ and $\gamma_{r'} \in r'$. So the same sequence of packet flips which make $r \leq_{\MS} r'$ make $\tilde{r} \leq_{\MS} \tilde{r'}$. Using the isomorphism in Theorem \ref{DGH:thm:Zieg}, we have that $\Inv(r) = \Inv(\tilde{r}) \leq_{\SSO} \Inv(\tilde{r'}) = \Inv(r')$.
	 
	Now assume that $U_1$ and $U_2$ are realizable $(k+1)$-sets for which $J_s \subseteq U_i \subseteq J_s \cup J_F$ for $i \in \{1,2\}$. Further assume that $U_1 \leq_{\SSO} U_2$. Again, there exists maximal chains $\rho_1$ and $\rho_2$ in $B(n,k-1)$ for which $\Inv(\rho_1) = U_1$ and $\Inv(\rho_2) = U_2$. Theorem \ref{DGH:thm:Zieg} tells us that $\rho_1 \leq_{\MS} \rho_2$. As $\Inv(\rho_1)$ and $\Inv(\rho_2)$ differ only by elements in $J_F$, the sequence of packet flips consists entirely of packets generated by elements in $J_F$.
	 
	Arguing as before, we may use $\rho_1$ and $\rho_2$ to produce paths $\gamma_1, \gamma_2 \in \paths{\emptyset}{J}{k-1}$ for which $\Inv(\gamma_1) = \Inv(\rho_1)$, $\Inv(\gamma_2) = \Inv(\rho_2)$, $\gamma_1 \in r$, and $\gamma_2 \in r'$. Because $\rho_1$ and $\rho_2$ only differ by a sequence of packet flips from $J_F$ as well as elementary equivalences, $\gamma_1$ and $\gamma_2$ also only differ by a sequence of elementary equivalences and packet flips from $J_F$. Therefore, $r \leq_{\MS} r'$.  
\end{proof}

Although the proof of Theorem \ref{DGH:thm:PathZEquiv} does not proceed exactly along these lines, the key concept is that any path $\gamma \in \paths{\emptyset}{J}{k-1}$ may be extended to a maximal chain from $\emptyset$ to $C(n,k)$ in $B(n,k-1)$. According to Equation \eqref{DGH:eq:GammaInv}, the inversion set of this maximal chain is well-behaved. Not only is the inversion set well-behaved, but it can be made to equal the inversion set of $\gamma$!

\begin{lemma}\label{DGH:lem:ExtLem}
	If $\gamma \in \paths{\emptyset}{J}{k-1}$, then there exists a path $\tau$ in $B(n,k-1)$ from $J$ to $C(n,k)$ for which the maximal chain $\tau \circ \gamma$ has $\Inv(\tau \circ \gamma) = \Inv(\gamma)$. 
\end{lemma}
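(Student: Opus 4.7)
The plan is to lift $\gamma$ to a full admissible $k$-order on $C(n,k)$ with inversion set exactly $\Inv(\gamma)$ and with $J$ as a prefix, and then read off $\tau$ as the portion of the corresponding maximal chain lying above $J$. By Equation \eqref{DGH:eq:GammaInv} we will have $\Inv(\tau \circ \gamma) = \Inv(\gamma) \cup \Inv(\tau)_\emptyset$, and since $\Inv(\tau)_\emptyset \subseteq J_\emptyset$ is disjoint from $\Inv(\gamma) \subseteq J_s \cup J_F$, matching the two inversion sets amounts to forcing $\Inv(\tau)_\emptyset = \emptyset$; so the construction I describe below automatically achieves the conclusion.

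To produce such a lift, I first apply Theorem \ref{DGH:thm:InvReal} to conclude that $\Inv(\gamma)$ is a realizable $(k+1)$-set, so Ziegler's Theorem \ref{DGH:thm:Zieg} supplies some $\tilde r \in B(n,k)$ with $\Inv(\tilde r) = \Inv(\gamma)$. Since $J_s \subseteq \Inv(\tilde r)$ and $J_p \subseteq \Inv(\tilde r)^c$, Corollary \ref{DGH:cor:Zequiv} delivers a representative $\tilde\rho \in \tilde r$ in which $J$ is a prefix. Under the Manin-Schechtman correspondence of Theorem \ref{DGH:thm:MSthm}(\ref{MSthm:MSCor}), $\tilde\rho$ then corresponds to a maximal chain $\bar\gamma$ in $B(n,k-1)$ passing through $J$ whose overall inversion set equals $\Inv(\tilde\rho) = \Inv(\gamma)$.

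The remaining step, which I view as the main technical obstacle, is arranging that the initial portion of $\bar\gamma$ up to $J$ lies in the equivalence class of $\gamma$. For this it suffices to show that $\tilde\rho|_J$ and $\gamma$ agree as admissible $J$-orders up to elementary equivalence. Since both have the same inversion set $\Inv(\gamma)$ as $J$-orders, they induce the same (lex or antilex) order on every $k$-packet $P_X$ with $X \in J_F$, and they are automatically forced to agree on packets with $X \in J_s$ or $X \in J_p$. Hence any pair in $J$ on which they disagree cannot share a $k$-packet, and I reuse the out-of-order pair sorting argument from the proof of Theorem \ref{DGH:thm:AdmPathCorr}: some consecutive disagreeing pair exists, and swapping it via an elementary equivalence takes place strictly inside the prefix $J$, preserving both the equivalence class $\tilde r$ and the prefix structure of $\tilde\rho$. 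Induction on the number of out-of-order pairs terminates the adjustment. Once this is done, I decompose $\bar\gamma = \tau \circ \gamma'$ with $\gamma' \sim \gamma$; then $\tau \circ \gamma \sim \bar\gamma$ because elementary equivalences within the initial segment do not affect $\tau$, and since elementary equivalences preserve inversion sets, $\Inv(\tau \circ \gamma) = \Inv(\bar\gamma) = \Inv(\gamma)$, as desired.
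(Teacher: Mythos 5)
Your proposal is correct, and the setup is identical to the paper's: show $\Inv(\gamma)$ is realizable (Theorem \ref{DGH:thm:InvReal}), lift it to some $\tilde r \in B(n,k)$ via Theorem \ref{DGH:thm:Zieg}, choose a representative $\tilde\rho$ with $J$ as a prefix (Lemma \ref{DGH:lem:Zequiv}/Corollary \ref{DGH:cor:Zequiv}), and read off $\tau$ as the portion of the corresponding maximal chain above $J$.

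The finish differs. The paper does not establish that the bottom segment $\tilde\gamma$ of $\bar\gamma$ is equivalent to $\gamma$. Instead it applies \eqref{DGH:eq:GammaInv} to $\bar\gamma = \tau\circ\tilde\gamma$: since $\Inv(\bar\gamma)=\Inv(\gamma)\subseteq J_s\cup J_F$ and $\Inv(\tau)_\emptyset\subseteq J_\emptyset$, the union $\Inv(\tilde\gamma)\cup\Inv(\tau)_\emptyset$ forces $\Inv(\tau)_\emptyset=\emptyset$, and then \eqref{DGH:eq:GammaInv} is applied a second time directly to the pair $(\tau,\gamma)$, giving $\Inv(\tau\circ\gamma)=\Inv(\gamma)$. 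You instead prove the stronger claim $\tilde\rho|_J\sim\gamma$ by the out-of-order-pair argument (essentially re-deriving the injectivity in Theorem \ref{DGH:thm:PathZEquiv} for $J$-orders with a common inversion set) and conclude by equivalence-invariance of $\Inv$. Both finishes are valid; the paper's is slicker because it bypasses the equivalence step entirely. You also correctly anticipate in your first paragraph that the whole matter reduces to $\Inv(\tau)_\emptyset=\emptyset$, so you could have shortened your argument by deducing that directly from the disjointness $\Inv(\gamma)\cap J_\emptyset=\emptyset$ rather than routing through the equivalence $\tilde\rho|_J\sim\gamma$.
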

\begin{proof} 
	The inversion set of $\gamma$ is a realizable $(k+1)$-set. By Theorem \ref{DGH:thm:Zieg}, it must be the inversion set of some $\tilde{r} \in B(n,k)$. Because of Lemma \ref{DGH:lem:Zequiv}, there is a representative $\rho \in \tilde{r}$ for which $J$ is a prefix. Viewing $\rho$ as a maximal chain in $B(n,k-1)$, we may decompose it into $\tilde{\gamma}$, a path from $\emptyset$ to $J$, and $\tau$, a path from $J$ to $C(n,k)$. Equations \eqref{DGH:eq:GammaInv} show that $\Inv(\tau)_{\emptyset} = \emptyset$. Therefore $\tau \circ \gamma$ is a maximal chain in $B(n,k-1)$ for which $\Inv(\gamma) = \Inv(\tau \circ \gamma)$. 
\end{proof}
 

\begin{theorem}\label{DGH:thm:PathSnk}
	Whenever $1 < k \leq n$, the poset $\epaths{\emptyset}{J}{k-1}$ has a unique minimal element $r_{\min}$ and a unique maximal element $r_{\max}$. These elements are determined by their inversion sets which are, respectively, $J_s$ and $J_s \cup J_F$.
\end{theorem}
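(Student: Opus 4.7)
The plan is to invoke Theorem \ref{DGH:thm:PathZEquiv}, which identifies $\epaths{\emptyset}{J}{k-1}$ under $\leq_{\MS}$ with the poset of realizable $(k+1)$-sets $U$ satisfying $J_s \subseteq U \subseteq J_s \cup J_F$ under $\leq_{\SSO}$. Under this order isomorphism, exhibiting a unique $\leq_{\MS}$-minimum and maximum of the source poset reduces to showing that $J_s$ and $J_s \cup J_F$ lie in the target poset and are its unique $\leq_{\SSO}$-extrema. Uniqueness is then automatic from the injectivity statement in Theorem \ref{DGH:thm:PathZEquiv}.

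The first task is to verify that both $J_s$ and $J_s \cup J_F$ are themselves realizable $(k+1)$-sets. Realizability of $J_s$ is Lemma \ref{DGH:lem:Jreal}(\ref{Jreal:partone}). For $J_s \cup J_F$, I would appeal to Lemma \ref{DGH:lem:parts}: in each of the four allowed segmentations of a $(k+2)$-packet $P_Y$, the intersection $(J_s \cup J_F) \cap P_Y$ sits entirely at one end of $P_Y$---it is a prefix in the two arrangements where $J_s$ occupies the lex-minimal slot (\eqref{parts:sFp} and \eqref{parts:s0p}) and a suffix in the other two (\eqref{parts:pFs} and \eqref{parts:p0s})---so the prefix-or-suffix condition for realizability holds. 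Crucially this works because, by Lemma \ref{DGH:lem:Jreal}(\ref{Jreal:parttwo}), no $(k+2)$-packet contains elements of both $J_F$ and $J_\emptyset$, so the $J_F$ and $J_\emptyset$ parts never simultaneously interrupt the prefix/suffix structure. Both sets lie in the interval $[J_s, J_s \cup J_F]$ trivially.

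The final task is to show every realizable $U$ in the interval satisfies $J_s \leq_{\SSO} U \leq_{\SSO} J_s \cup J_F$ via chains of single-step inclusions staying in the interval. Any intermediate realizable set in such a chain automatically lies in the interval by transitivity of inclusion, so it suffices to exhibit, for each realizable $V$ with $J_s \subseteq V \subsetneq J_s \cup J_F$, some $X \in J_F \setminus V$ with $V \cup \{X\}$ realizable (and symmetrically for descent from $J_s \cup J_F$ down to $U$). To produce such an $X$, I would pick an admissible $J$-order $\rho$ with $\Inv(\rho) = V$ via Theorem \ref{DGH:thm:AdmPathCorr} and Corollary \ref{DGH:cor:Lifting}, extend it to an admissible $k$-order $\tilde\rho$ via Lemma \ref{DGH:lem:ExtLem} so that $\Inv(\tilde\rho) = V$ and $J$ is a prefix of $\tilde\rho$, and then invoke the Manin-Schechtman packet-flip machinery on the reverse $\tilde\rho^t$ (whose inversion set is $(J_F \setminus V) \cup J_p \cup J_\emptyset$) to produce a flippable inverted packet.

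The main obstacle is steering this Manin-Schechtman flip into $J_F \setminus V$ rather than into $J_p$ or $J_\emptyset$: a generic MS flip might lie in any of these three components. I expect this to require a careful structural examination of the packets at the boundary between the $J$-prefix and its complement in $\tilde\rho$. The key geometric input is again Lemma \ref{DGH:lem:Jreal}(\ref{Jreal:parttwo}), which decouples the $J_F$ and $J_\emptyset$ contributions on each $(k+2)$-packet; together with the fact that $J$ being a prefix of $\tilde\rho$ forces packets in $J_p \cup J_\emptyset$ to cross or sit above the $J$/$J^c$ interface, this should allow one to locate a flippable packet sitting entirely inside the $J$-prefix, i.e.\ in $J_F \setminus V$. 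Iterating this produces the desired ascending and descending chains in the interval.
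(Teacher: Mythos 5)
There is a genuine gap in the extremality step, and you acknowledge it yourself. The reduction via Theorem \ref{DGH:thm:PathZEquiv} and the realizability verifications for $J_s$ (Lemma \ref{DGH:lem:Jreal}) and $J_s \cup J_F$ (via Lemma \ref{DGH:lem:parts}) are correct and match the paper. But the crucial step --- showing that every realizable $K$ with $J_s \subseteq K \subseteq J_s \cup J_F$ satisfies $J_s \leq_{\SSO} K$ and $K \leq_{\SSO} J_s \cup J_F$ --- is where your proposal stops short. You outline a step-by-step flip argument: take $\tilde\rho$ with $\Inv(\tilde\rho) = K$ and $J$ a prefix, apply Manin--Schechtman packet-flip machinery, and hope the flip lands in $J_F \setminus K$. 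You then concede that a ``generic MS flip might lie in any of these three components'' ($J_F$, $J_p$, $J_\emptyset$) and that a ``careful structural examination'' is needed. That examination is the entire content of this theorem; deferring it is not a proof. There is no reason offered that such a steered flip always exists, and for a flip into $J_p$ or $J_\emptyset$ the iteration does not advance the restricted order at all (and in the $J_p$ case would destroy $J$ being a prefix).

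The paper sidesteps this obstacle entirely by never constructing a flip. Instead it invokes Lemma \ref{DGH:lem:SSrel}: to show $J_s \leq_{\SSO} K$ it suffices to find $r \in B(n,k+1)$ with $(J_s)_s \cup K_s \subseteq \Inv(r)$ and $(J_s)_p \cup K_p \subseteq \Inv(r)^c$, and such an $r$ exists whenever $(J_s)_s \cup K_s$ is a realizable $(k+2)$-set. The key observation is that $(J_s)_s \cup K_s = K_s$, which is already known to be realizable by Lemma \ref{DGH:lem:Jreal}; the equality is proven by noting that any $X \in (J_s)_s$ with $X \in K_F$ would force a segmentation of $P_X$ of the form $\{P_X \cap J_F < P_X \cap J_s\}$ (since $K \setminus J_s \subseteq J_F$), which is forbidden by Corollary \ref{DGH:cor:badsets}. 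The maximal case runs analogously with $(J_s \cup J_F)_s \cup K_s = K_s$. This is genuinely a different argument: a single global realizability statement replaces the greedy flip search, and the chain from $J_s$ to $K$ is then automatically inside the interval by inclusion. If you want to salvage your step-by-step route you would in effect have to re-derive the content of Lemma \ref{DGH:lem:SSrel} and Corollary \ref{DGH:cor:badsets} anyway, at which point the global argument is shorter.
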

\begin{proof}
	We begin by proving that $\epaths{\emptyset}{J}{k-1}$ has elements $r_{\min}$ and $r_{\max}$ with $\Inv(r_{\min}) = J_s$ and $\Inv(r_{\max}) = J_s \cup J_F$. If $r_{\min}$ exists, then it is clearly minimal because $J_s$ is minimal under the single-step inclusion order found in Theorem \ref{DGH:thm:PathZEquiv}. Likewise, $r_{\max}$ is clearly maximal.
	
	In order to prove that $r_{\min}$ exists, we need to find an admissible $J$ order $\rho$ with $\Inv(\rho) = J_s$. Then $r_{\min} = \pi(\rho) \in \epaths{\emptyset}{J}{k-1}$. Fortunately, $J_s$ is a realizable $(k+1)$-set. By Theorem \ref{DGH:thm:Zieg} there exists $r \in B(n,2)$ for which $\Inv(r) = J_s$. By Lemma \ref{DGH:lem:Zequiv}, there exists $\rho \in A(n,k)$ with $\Inv(\rho) = J_s$ and for which $J$ is a prefix. The order $\rho\vert_{J}$ is the desired admissible $J$ order. The same argument is used to prove that $r_{\max}$ exists. 	
	
	We now prove that $r_{\min}$ is the unique minimal element in $\epaths{\emptyset}{J}{k-1}$. This is accomplished by by proving that every realizable $(k+1)$-set $K$ where $J_s \subseteq K \subseteq J_s\cup J_F$ obeys $J_s \leq_{SS} K$. 
	
	Lemma \ref{DGH:lem:SSrel} equates $J_s \leq_{SS} K$ with finding an admissible $(k+1)$-order for which $J_s$ and $K$ are prefixes. This, in turn, is equivalent to finding some admissible $(k+1)$-order $\rho \in A(n,k+1)$ for which $((J_s)_s \cup K_s) \subseteq \Inv(\rho)$ and $((J_s)_p \cup K_p) \subseteq \Inv(\rho)^c$. Alternatively, we may find some $r \in B(n,k+1)$ which satisfies the same requirements. In particular, $\Inv(r) = (J_s)_s \cup K_s$ satisfies the requirement.
	
	According to Theorem \ref{DGH:thm:Zieg} such a $r$ exists whenever $(J_s)_s \cup K_s$ is a realizable $(k+2)$-set. We claim that $(J_s)_s \cup K_s = K_s$ and is therefore realizable by Lemma \ref{DGH:lem:Jreal}. Take $X \in (J_s)_s$. Since $J_s \subseteq K$, a suffix of $X$ is contained in $K$, so $X \in K_s \cup K_F$. We need only prove that $X \in K_F$ produces a contradiction. Since $K\setminus J_s \subseteq J_F$, such a packet $P_X$ would have segmentation \eqref{badsets:Fs}, which is forbidden by Corollary \ref{DGH:cor:badsets}. Hence $X$ cannot be in $K_F$, and $X \in K_s$.
		
	The same strategy is used to prove that $r_{\max}$ is the unique maximal element of $\epaths{\emptyset}{J}{k-1}$. We need to show that $K_s \cup (J_s \cup J_F)_s$ is realizable. It is sufficient to show that $K_s \cup (J_s \cup J_F)_s = K_s$.
	
	Let $X \in (J_s \cup J_F)_s$. The first element of $P_X$ is either in $J_p$ or $J_\emptyset$, while the last element of $P_X$ is either in $J_s$ or $J_F$. If the last element of $P_X$ is in $J_s$, then $X \in K_s$. If the last element of $P_X$ is in $J_F$, then Lemma \ref{DGH:lem:parts} shows that the first element is in $J_p$. According to Lemma \ref{DGH:lem:parts}, the only possible segmentation of $P_X$ is 
	\[\{P_X \cap J_p < P_X \cap J_F\}.\]
	But this is impossible by Lemma \ref{DGH:cor:badsets}.  
\end{proof}

Theorem \ref{DGH:thm:PathZEquiv} and Theorem \ref{DGH:thm:PathSnk} justify the following definition.

\begin{definition}\label{DGH:def:JBruhatOrd}
	The poset $\epaths{\emptyset}{J}{k-1}$ is called the \emph{second Bruhat order} of $J$.
\end{definition}

\section{Higher Bruhat orders for realizable $2$-sets}\label{DGH:Sec:GenBruhat}



The purpose of this section is to define an analog of higher Bruhat orders for an arbitrary realizable $2$-set. Drawing inspiration from Theorem \ref{DGH:thm:PathZEquiv}, the analog of the $k$-th higher Bruhat order will be a collection of intervals in $B(n,k)$. They will have unique minimal and maximal elements, again distinguished by their inversion sets. These subposets will also be defined inductively so that they automatically satisfy a generalization of the Manin-Schechtman correspondence. Before establishing the full theory, it is useful to study how the first step in this process works. Because the second higher Bruhat order for any realizable $k$-set $J$ has a unique minimal and maximal element, we begin by describing this first step for $\epaths{\emptyset}{J}{k-1}$.

\subsection{A higher Bruhat order for paths in $\epaths{\emptyset}{J}{k-1}$} \label{DGH:ssec:BPaths}


Theorem \ref{DGH:thm:PathSnk} allows us to discuss maximal chains in $\epaths{\emptyset}{J}{k-1}$. Recall that the minimal element has inversion set $J_s$ and the maximal element has inversion set $J_s \cup J_F$. Because elements of $\epaths{\emptyset}{J}{k-1}$ are ordered by the single step inclusion order, maximal chains in $\epaths{\emptyset}{J}{k-1}$ correspond to a subset of the set of total orders on $J_F$. Call this subset of total orders $\mathcal{O}_F$. Because $J_F$ is often not realizable, it does not make sense to talk about the entries of $\mathcal{O}_F$ as admissible. 

However, we can extend any order $\rho_F \in \mathcal{O}_F$ to an admissible $(k+1)$-order on $J_s \cup J_F$. We may do so by fixing an admissible $J_s$ order $\rho_s \in A_{J_s}(n,k+1)$. We then pre-append $\rho_s$ to every element in $\mathcal{O}_F$. This creates a collection of total orders on $J_s \cup J_F$ which we denote $\extOrd{\rho_s}{S\cup F} = \lbrace \rho_s\rho_F \mid \rho_F \in \mathcal{O}_F\rbrace$. 

\begin{lemma}\label{DGH:lem:OextA} Every element in $\extOrd{\rho_s}{S\cup F}$ is an admissible $J_s \cup J_F$ order. More precisely, it is a subset of the following subset of admissible $(J_s\cup J_F)$ orders:
\begin{equation}\label{DGH:eqn:AdmJSJF}
	\mathcal{A}_{S\leq F} = \lbrace \rho \in A_{J_s\cup J_F}(n,k+1) \mid s \leq_{\rho} f \text{ for all } x \in J_s, f \in J_F\rbrace
\end{equation}
\end{lemma}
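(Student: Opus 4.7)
The plan is to realize $\rho_s\rho_F$ as the admissible $(J_s\cup J_F)$-order corresponding, under Theorem \ref{DGH:thm:AdmPathCorr}, to a concatenated path $\emptyset \to J_s \to J_s\cup J_F$ inside $B(n,k)$. First I would record that $J_s$ and $J_s\cup J_F$ are both realizable $(k+1)$-sets: the former by Lemma \ref{DGH:lem:Jreal}, and the latter because by Theorem \ref{DGH:thm:PathSnk} it arises as $\Inv(r_{\max})$ for the unique maximum $r_{\max} \in \epaths{\emptyset}{J}{k-1}$, and every such inversion set is realizable by Theorem \ref{DGH:thm:InvReal}. Thus both sit as nodes of the poset $B(n,k)$ that it makes sense to travel between.

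Next, I would lift each factor to a path in $B(n,k)$. Applying Theorem \ref{DGH:thm:AdmPathCorr} with $k+1$ in place of $k$ (valid since $k\ge 2$) to the realizable $(k+1)$-set $J_s$, the admissible $J_s$-order $\rho_s$ corresponds to a path $\gamma_s \in \paths{\emptyset}{J_s}{k}$ whose induced order on $J_s$ is $\rho_s$. For $\rho_F$, a maximal chain in $\epaths{\emptyset}{J}{k-1}$ runs from the unique minimum (with inversion set $J_s$) to the unique maximum (with inversion set $J_s\cup J_F$) by Theorem \ref{DGH:thm:PathSnk}; under the poset isomorphism of Theorem \ref{DGH:thm:PathZEquiv}, this chain becomes a sequence $J_s = U_0 \sqsubset U_1 \sqsubset \cdots \sqsubset U_N = J_s\cup J_F$ of realizable $(k+1)$-sets, each differing from the previous by exactly one element of $J_F$. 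That is precisely a path $\gamma_F$ from $J_s$ to $J_s\cup J_F$ in $B(n,k)$, and $\rho_F$ records the order in which the new elements are appended.

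Concatenating yields a path $\gamma_F\circ\gamma_s \in \paths{\emptyset}{J_s\cup J_F}{k}$, and a second invocation of Theorem \ref{DGH:thm:AdmPathCorr} (this time for the realizable set $J_s\cup J_F$) converts this path back into an admissible $(J_s\cup J_F)$-order. Tracing through the bijection, that order inserts the elements of $J_s$ first in the order dictated by $\gamma_s$ (namely $\rho_s$) and then the elements of $J_F$ in the order dictated by $\gamma_F$ (namely $\rho_F$), and so it equals precisely $\rho_s\rho_F$. The membership $\rho_s\rho_F \in \mathcal{A}_{S\leq F}$ is then immediate from this description. The main subtlety to audit is that the bijection of Theorem \ref{DGH:thm:AdmPathCorr} really does send a path concatenated at $J_s$ to the order obtained by concatenating the two induced orders; this follows from Lemma \ref{DGH:lem:0JPaths}, whose inverse map is simply ``take the induced total order,'' a recipe that manifestly respects splitting a path into prefix and suffix pieces at an intermediate node.
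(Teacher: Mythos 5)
Your proposal is correct and follows essentially the same route as the paper's proof: both arguments convert $\rho_s$ and $\rho_F$ to paths in $B(n,k)$ via Theorem \ref{DGH:thm:AdmPathCorr} and Theorem \ref{DGH:thm:PathZEquiv} respectively, concatenate them at the intermediate node $J_s$, and then invoke the bijection of Theorem \ref{DGH:thm:AdmPathCorr} to conclude admissibility, with membership in $\mathcal{A}_{S\leq F}$ following because the concatenated path passes through $J_s$. The only difference is cosmetic: you spell out the realizability of $J_s$ and $J_s\cup J_F$ and trace the bijection in more detail than the paper does.
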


\begin{proof}
The only non-trivial part of the claim is that each order in $\extOrd{\rho_s}{S\cup F}$ is admissible. By Theorem \ref{DGH:thm:AdmPathCorr}, we can instead show that any $\rho \in \extOrd{\rho_s}{S\cup F}$ comes from some path in $B(n,k-1)$ from $\emptyset$ to $J_s \cup J_F$. This in turn is equivalent to producing an ascending chain of realizable $(k+1)$-sets from $\emptyset$ to $J_s \cup J_F$ under single step inclusion.

By design, the $\rho_s$ component of $\rho$ corresponds to a path from $\emptyset$ to $J_s$. This automatically produces an ascending chain from $\emptyset$ to $J_s$. The $\rho_F$ component also produces an ascending chain from $J_s$ to $J_s \cup J_F$ according to Theorem \ref{DGH:thm:PathZEquiv}. Stacking these two chains gives the desired ascending chain from $\emptyset$ to $J_s \cup J_F$. Since the chain contains $J_s$, we know that $\extOrd{}{S\cup F}$ is contained in $\mathcal{A}_{S\leq F}$.
\end{proof}

In order to produce an admissible $(J_s \cup J_F)$ order out of a maximal chain in $\epaths{\emptyset}{J}{k}$, we had to pick an admissible $J_s$ order. This choice ultimately does not matter, at least up to elementary equivalences. The general idea is that the choice of $\rho_s$ does not alter the resulting inversion sets. The proof depends upon this next lemma. 

\begin{lemma}\label{DGH:lem:JSF}
	Let $J$ be a realizable $k$-set. Then $(J_s)_{F} = \emptyset$.
\end{lemma}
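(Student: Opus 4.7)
The approach is to find, for any would-be $X \in (J_s)_F$, a single $k$-set that would be forced to be both in $J$ and not in $J$. Suppose for contradiction that $X = \{x_1 < x_2 < \cdots < x_{k+2}\} \in (J_s)_F$. By definition, every $(k+1)$-subset $\widehat{x}_i = X \setminus \{x_i\}$ lies in $J_s$, so each $P_{\widehat{x}_i} \cap J$ is a non-empty proper suffix of $P_{\widehat{x}_i}$.

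The key is to focus on the two extreme subsets $\widehat{x}_{k+2} = \{x_1,\ldots,x_{k+1}\}$ and $\widehat{x}_1 = \{x_2,\ldots,x_{k+2}\}$, and exploit the $k$-set
\[ Z \coloneqq X \setminus \{x_1, x_{k+2}\} = \{x_2, \ldots, x_{k+1}\}, \]
which belongs to both $P_{\widehat{x}_{k+2}}$ and $P_{\widehat{x}_1}$. Viewed inside $P_{\widehat{x}_{k+2}}$, $Z$ is obtained by removing the smallest element $x_1$ of $\widehat{x}_{k+2}$, so $Z$ is the lexicographic \emph{maximum} of $P_{\widehat{x}_{k+2}}$. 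Viewed inside $P_{\widehat{x}_1}$, however, $Z$ is obtained by removing the largest element $x_{k+2}$ of $\widehat{x}_1$, so $Z$ is the lexicographic \emph{minimum} of $P_{\widehat{x}_1}$.

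From $\widehat{x}_{k+2} \in J_s$ we will conclude that $P_{\widehat{x}_{k+2}} \cap J$ is a non-empty suffix and therefore contains its top element, giving $Z \in J$. From $\widehat{x}_1 \in J_s$ we will conclude that $P_{\widehat{x}_1} \cap J$ is a \emph{proper} suffix and therefore omits its bottom element, giving $Z \notin J$. These two conclusions contradict one another, so no such $X$ exists and $(J_s)_F = \emptyset$.

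I do not foresee any real obstacle; the only subtlety is recognizing that the single $k$-set $Z = X \setminus \{x_1, x_{k+2}\}$ plays opposite extremal roles in the two packets $P_{\widehat{x}_{k+2}}$ and $P_{\widehat{x}_1}$. Observe that the realizability of $J$ is not actually used in this argument --- only the definition of $J_s$ --- so the statement holds for every $J \subseteq C(n,k)$.
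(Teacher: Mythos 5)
Your proof is correct and uses essentially the same idea as the paper: both proofs hinge on the observation that the single $k$-set $Z = X \setminus \{x_1, x_{k+2}\}$ is the lexicographic maximum of $P_{\widehat{x}_{k+2}}$ but the lexicographic minimum of $P_{\widehat{x}_1}$, so it cannot simultaneously satisfy the suffix conditions imposed by $\widehat{x}_{k+2}, \widehat{x}_1 \in J_s$. (The paper phrases this as a contrapositive, deducing $\widehat{x}_{k+2} \notin J_s$ from $\widehat{x}_1 \in J_s$, whereas you run it as a direct contradiction, but the content is identical.) Your closing remark is also accurate: the paper's sentence ``So $\widehat{k+2} \in J_p \cup J_{\emptyset}$'' tacitly appeals to realizability to place $\widehat{k+2}$ in one of the four bins, but as you point out this is unnecessary --- one only needs that $P_{\widehat{k+2}} \cap J$ omits its maximal element and hence is not a nonempty suffix, so the lemma holds for arbitrary $J \subseteq C(n,k)$.
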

\begin{proof}
	The set $(J_s)_F$ is a $(k+2)$-set so let $X = \{1,\dots,k+2\}$. Suppose that $\widehat{1} \in J_s$. Then $\widehat{1}\widehat{k+2} \notin J$. This is the lexicographic maximal element in $\widehat{k+2}$. So $\widehat{k+2} \in J_p \cup J_{\emptyset}$. We conclude that $X \notin (J_s)_F$.
\end{proof}

\begin{corollary}\label{DGH:cor:rsequiv}
	All admissible $J_s$ orders are equivalent to each other.
\end{corollary}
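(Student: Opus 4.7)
The plan is to apply Theorem \ref{DGH:thm:PathZEquiv} directly to $J_s$ itself, treating $J_s$ as the realizable set whose admissible orders we are classifying, and then invoke Lemma \ref{DGH:lem:JSF} to collapse the poset to a single point.

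First I would note that by Lemma \ref{DGH:lem:Jreal}, $J_s$ is a realizable $(k+1)$-set, so all of the machinery of Sections \ref{DGH:Sec:Adm} and \ref{DGH:Two:Two} applies with $J$ replaced by $J_s$ (and $k$ replaced by $k+1$). In particular, Theorem \ref{DGH:thm:AdmPathCorr} gives a bijection $A_{J_s}(n,k+1) \cong \paths{\emptyset}{J_s}{k}$, and the quotient by elementary equivalence is the second Bruhat order $\epaths{\emptyset}{J_s}{k}$. So to show that any two admissible $J_s$-orders are equivalent, it suffices to show that $\epaths{\emptyset}{J_s}{k}$ consists of a single equivalence class.

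Next I would apply Theorem \ref{DGH:thm:PathZEquiv} (with $J_s$ in place of $J$), which identifies $\epaths{\emptyset}{J_s}{k}$ with the poset of realizable $(k+2)$-sets $U$ satisfying
\[
(J_s)_s \;\subseteq\; U \;\subseteq\; (J_s)_s \cup (J_s)_F.
\]
By Lemma \ref{DGH:lem:JSF} (applied to the realizable $(k+1)$-set $J_s$), we have $(J_s)_F = \emptyset$. Therefore the only admissible value of $U$ is $(J_s)_s$ itself, and the poset $\epaths{\emptyset}{J_s}{k}$ has exactly one element. Consequently every pair of admissible $J_s$-orders lies in the same equivalence class, which is the claim.

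There is essentially no obstacle here beyond bookkeeping: the only subtle point is the index shift in the hypothesis of Lemma \ref{DGH:lem:JSF}, where the ``$J$'' of that lemma is instantiated by our $J_s$, so $(J_s)_F$ is interpreted as a set of $(k+2)$-sets. Once that is noted, the corollary is immediate from the combination of Lemma \ref{DGH:lem:Jreal}, Lemma \ref{DGH:lem:JSF}, Theorem \ref{DGH:thm:AdmPathCorr}, and Theorem \ref{DGH:thm:PathZEquiv}.
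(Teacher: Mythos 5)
Your argument is correct and mirrors the paper's own proof: both reduce the claim via Theorem \ref{DGH:thm:PathZEquiv} to the fact that $(J_s)_F = \emptyset$ (Lemma \ref{DGH:lem:JSF}), which forces every admissible $J_s$-order to have the same inversion set $(J_s)_s$ and hence to lie in a single equivalence class. One small correction to your closing remark about instantiation: Lemma \ref{DGH:lem:JSF} should be applied with its ``$J$'' being the original realizable $k$-set $J$, not $J_s$ --- the lemma's conclusion already reads $(J_s)_F = \emptyset$, which is exactly the collection of $(k+2)$-sets you need; instantiating the lemma's $J$ with $J_s$ would instead give $((J_s)_s)_F = \emptyset$, a statement about $(k+3)$-sets, which is not what you are after.
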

\begin{proof}
	By Theorem \ref{DGH:thm:PathZEquiv}, admissible $J_s$ orders are equivalent to each other if they have the same inversion set. Every admissible $J_s$ order has the same inversion set because $(J_s)_F = \emptyset$.
\end{proof}

\begin{lemma}\label{DGH:lem:choice}
	If $\rho_s$ and $\rho'_s$ are both admissible $J_s$ order, then 
\begin{equation}\label{DGH:eq:choice}
	\pi(\extOrd{\rho_s}{S\cup F}) = \pi(\mathcal{O}^{\rho'_s}_{S\cup F}),
\end{equation}
	where $\pi: \paths{\emptyset}{J_s\cup J_F}{k} \rightarrow \epaths{\emptyset}{J_s\cup J_F}{k}$ is the quotient map defined in Definition \ref{DGH:def:EquivPaths}.
\end{lemma}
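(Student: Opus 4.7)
The plan is to show that for every $\rho_F \in \mathcal{O}_F$, the two concatenations $\rho_s \rho_F$ and $\rho'_s \rho_F$ lie in the same equivalence class in $\epaths{\emptyset}{J_s\cup J_F}{k}$. Given this, each set in \eqref{DGH:eq:choice} equals $\{[\rho_s \rho_F] \mid \rho_F \in \mathcal{O}_F\} = \{[\rho'_s \rho_F] \mid \rho_F \in \mathcal{O}_F\}$, so the two images under $\pi$ agree.

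First I would invoke Corollary \ref{DGH:cor:rsequiv}: since $J_s$ is a realizable $(k+1)$-set (Lemma \ref{DGH:lem:Jreal}) with $(J_s)_F = \emptyset$ (Lemma \ref{DGH:lem:JSF}), all admissible $J_s$-orders share the empty inversion set, hence form a single elementary equivalence class. Therefore $\rho_s$ and $\rho'_s$ are connected by a finite chain $\rho_s = \sigma_0 \sim \sigma_1 \sim \cdots \sim \sigma_m = \rho'_s$ of admissible $J_s$-orders, where each step swaps two adjacent elements $A, B \in J_s$ which do not lie in a common $(k+1)$-packet.

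Next, for a fixed $\rho_F \in \mathcal{O}_F$, each such elementary equivalence lifts to an elementary equivalence between the concatenations $\sigma_i\rho_F$ and $\sigma_{i+1}\rho_F$, viewed as orders on $J_s \cup J_F$. Indeed, the swapped elements $A$ and $B$ both sit inside the $\sigma_i$-prefix and remain adjacent (no element of $J_F$ has been inserted between them), and the packet condition $|A \cap B| < k$ is intrinsic to the pair and independent of the ambient order. By Lemma \ref{DGH:lem:OextA} each intermediate concatenation $\sigma_i \rho_F$ lives in $\paths{\emptyset}{J_s\cup J_F}{k}$, so this chain of elementary equivalences takes place entirely within the category where $\pi$ is defined. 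Chaining these yields $\rho_s \rho_F \sim \rho'_s \rho_F$, hence $[\rho_s \rho_F] = [\rho'_s \rho_F]$, which is exactly what we need.

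There is no real obstacle here; the only subtlety is the verification that an elementary equivalence in a smaller universe remains an elementary equivalence after appending a common suffix, and this follows immediately from the fact that adjacency is preserved and the shared-packet condition depends only on the pair being swapped. The result can therefore be stated succinctly: concatenation with a fixed $\rho_F$ defines a map on elementary equivalence classes, and Corollary \ref{DGH:cor:rsequiv} collapses every choice of $\rho_s$ to the same class.
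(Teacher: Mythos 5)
Your proof is correct and takes essentially the same route as the paper: both invoke Corollary \ref{DGH:cor:rsequiv} to get $\rho_s \sim \rho'_s$ and then lift the chain of elementary equivalences through concatenation with $\rho_F$. The only difference is that you conclude $[\rho_s\rho_F] = [\rho'_s\rho_F]$ directly from the lifted equivalence, whereas the paper takes a small detour through inversion sets and the injectivity from Theorem \ref{DGH:thm:PathZEquiv}; your version is slightly more streamlined, and your verification that adjacency and the shared-packet condition survive the concatenation makes the lifting step more explicit than in the original.
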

\begin{proof}
	We know that $\rho_s$ and $\rho'_s$ are equivalent to each other from Corollary \ref{DGH:cor:rsequiv}. As $\rho'_s\rho_F$ and $\rho_s\rho_F$ differ only in how $\rho_s$ and $\rho'_s$ differ, it follows that $\rho'_s\rho_F$ and $\rho_s\rho_F$ are also equivalent. So they have the same inversion set. 
	
	Hence $\pi(\extOrd{\rho_s}{S\cup F})$ and $\pi(\extOrd{\rho'_s}{S\cup F})$ give rise to the same collection of inversion sets. Because $\pi(\extOrd{\rho_s}{S\cup F}) \subseteq \epaths{\emptyset}{J_s\cup J_F}{2}$, its entries are uniquely determined by their inversion sets. The same is true for $\pi(\extOrd{\rho'_s}{S\cup F})$. We conclude that \eqref{DGH:eq:choice} holds.	 
\end{proof}

Since every admissible $(J_s\cup J_F)$ order which orders $J_s$ before $J_F$ automatically induces an admissible $J_s$ order, we may generalize Lemma \ref{DGH:lem:choice} a bit further in the following corollary. This corollary allows us to define the third Bruhat order of $J$.

\begin{corollary}\label{DGH:cor:choice}
	The following subposets of $\epaths{\emptyset}{J_s\cup J_F}{2}$ are equal:
	\begin{equation}
		\pi(\extOrd{\rho_s}{S\cup F}) = \pi(\mathcal{A}_{S\leq F})
	\end{equation}
\end{corollary}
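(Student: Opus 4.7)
The inclusion $\pi(\extOrd{\rho_s}{S\cup F}) \subseteq \pi(\mathcal{A}_{S\leq F})$ is immediate from Lemma \ref{DGH:lem:OextA}, so the work is in the reverse inclusion. The plan is to take an arbitrary $\rho \in \mathcal{A}_{S\leq F}$ and exhibit it as equivalent to an element of $\extOrd{\rho_s}{S\cup F}$. Since $J_s$ precedes $J_F$ in $\rho$, I would first decompose $\rho = \rho'_s\rho'_F$, where $\rho'_s \coloneqq \rho\vert_{J_s}$ and $\rho'_F \coloneqq \rho\vert_{J_F}$. The key claim is that $\rho'_s$ is an admissible $J_s$-order and $\rho'_F \in \mathcal{O}_F$.

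Granting this claim, the proof is essentially finished: $\rho$ lies in $\extOrd{\rho'_s}{S\cup F}$ by construction, and Lemma \ref{DGH:lem:choice} applied to the two admissible $J_s$-orders $\rho_s$ and $\rho'_s$ yields
\[
\pi(\rho) \in \pi(\extOrd{\rho'_s}{S\cup F}) = \pi(\extOrd{\rho_s}{S\cup F}).
\]

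To verify the claim, I would lift $\rho$ via Theorem \ref{DGH:thm:AdmPathCorr} to a path $\gamma \in \paths{\emptyset}{J_s\cup J_F}{k}$ in $B(n,k)$. Because $J_s$ is a prefix of $\rho$ and $J_s$ is realizable (Lemma \ref{DGH:lem:Jreal}), the path $\gamma$ passes through the intermediate realizable set $J_s$, and so factors as $\gamma = \gamma_F \circ \gamma_s$ with $\gamma_s$ a path from $\emptyset$ to $J_s$ and $\gamma_F$ a path from $J_s$ to $J_s\cup J_F$ in $B(n,k)$. Applying Theorem \ref{DGH:thm:AdmPathCorr} (or Lemma \ref{DGH:lem:0JPaths}) to $\gamma_s$ shows that the induced order $\rho'_s$ on $J_s$ is an admissible $J_s$-order. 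Meanwhile, $\gamma_F$ is an ascending chain of realizable $(k+1)$-sets under single-step inclusion between $J_s$ and $J_s\cup J_F$, which by Theorem \ref{DGH:thm:PathZEquiv} is exactly the data of a maximal chain in $\epaths{\emptyset}{J}{k-1}$. Hence the total order $\rho'_F$ that $\gamma_F$ induces on $J_F$ belongs to $\mathcal{O}_F$, completing the claim.

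The main obstacle is conceptual bookkeeping rather than anything delicate: one must consistently translate between admissible $(J_s\cup J_F)$-orders and paths in $B(n,k)$ via Theorem \ref{DGH:thm:AdmPathCorr}, between the two viewpoints on $\mathcal{O}_F$ (total orders on $J_F$ versus maximal chains in $\epaths{\emptyset}{J}{k-1}$) via Theorem \ref{DGH:thm:PathZEquiv}, and finally invoke Lemma \ref{DGH:lem:choice} at the right moment so that the choice of the fixed representative $\rho_s$ can be swapped for the $\rho'_s$ read off from $\rho$.
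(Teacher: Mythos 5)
Your proof is correct and follows the same line of reasoning the paper intends: the paper gives no formal proof, only the one-sentence remark preceding the corollary that ``every admissible $(J_s\cup J_F)$ order which orders $J_s$ before $J_F$ automatically induces an admissible $J_s$ order,'' together with an appeal to Lemma~\ref{DGH:lem:choice}. You have simply filled in the details that this remark suppresses, verifying via the path correspondence (Theorems~\ref{DGH:thm:AdmPathCorr} and~\ref{DGH:thm:PathZEquiv}) that the restriction $\rho\vert_{J_s}$ is an admissible $J_s$-order and that $\rho\vert_{J_F}$ lies in $\mathcal{O}_F$, after which Lemma~\ref{DGH:lem:choice} finishes the argument exactly as the paper suggests.
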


\begin{definition}\label{DGH:def:J1Higher}
	Let $J$ be a realizable $k$-set. The third Bruhat order of $J$ is defined as
	\begin{equation}
		\HB{3}{J} \coloneqq \pi(\mathcal{A}_{S\leq F})
	\end{equation}
	By definition, $\HB{3}{J} \subseteq \epaths{\emptyset}{J_s \cup J_F}{k}$. 
\end{definition}

We proved in Theorem \ref{DGH:thm:PathSnk}, that $\epaths{\emptyset}{J_s\cup J_F}{k}$ is a ranked poset with unique minimal and maximal elements. The minimal element is uniquely determined by its inversion set, which is $(J_s \cup J_F)_s$. The maximal element is uniquely determined by the inversion set $(J_s\cup J_F)_s \cup (J_s\cup J_F)_F$. 

The third Bruhat order of $J$ inherits a ranked poset structure from $\epaths{\emptyset}{J_s \cup J_F}{k}$. We also claim that $\HB{3}{J}$ has a unique minimal element. The unique minimal element of $\HB{3}{J}$ coincides with the unique minimal element of $\epaths{\emptyset}{J_s\cup J_F}{k}$. It has $(J_s \cup J_F)_s$ as its inversion set. In general, we do not know if $\HB{3}{J}$ has a unique maximal element. If it does then the unique maximal element of $\HB{3}{J}$ cannot coincide with the unique maximal element of $\epaths{\emptyset}{J_s\cup J_F}{2}$. Instead, the inversion set of the maximal element of $\HB{3}{J}$ must be a subset of: 
\begin{equation}\label{DGH:eq:maxinv}
(J_s\cup J_F)_s \cup (J_F)_F.
\end{equation}
This asymmetry arises because $(J_s \cup J_F)_s \cup (J_s \cup J_F)_F$ often intersects non-trivially with $(J_s)_p$. When this occurs, any admissible $(J_s\cup J_F)$ order $\rho$ with $\Inv(\rho) = (J_s \cup J_F)_s \cup (J_s\cup J_F)_F$ is no longer equivalent to an order that orders the elements of $J_s$ before the elements of $J_F$. In particular, it fails the criterion laid out in Lemma \ref{DGH:lem:SSrel}. At the moment we are unable to prove that the realizable set in \eqref{DGH:eq:maxinv} is maximal when $J$ is an arbitrary $k$-set. However, we are able to prove it when $J$ is a realizable $2$-set. At this point, we eschew further discussion of the minimal and maximal elements of $\HB{3}{J}$ in favor of presenting the general theory for $k = 2$. 

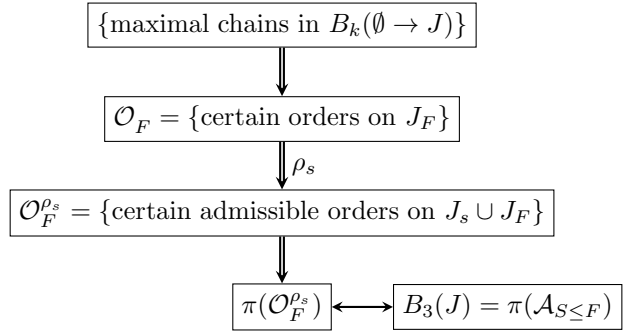
\begin{figure}
	\centering
	\begin{tikzpicture}
		\node[rectangle, draw] (MC) at (1.5,7){$\lbrace \text{maximal chains in } \epaths{\emptyset}{J}{k}\rbrace$};
		\node[rectangle, draw] (OF) at (1.5,5.75){$\extOrd{}{F} = \lbrace\text{certain orders on } J_F\rbrace$};
		\node[rectangle, draw] (EOF) at (1.5,4.5){$\extOrd{\rho_s}{F} = \lbrace\text{certain admissible orders on } J_s\cup J_F\rbrace$};
		\node[rectangle, draw] (PEOF) at (1.5,3.25){$\pi(\extOrd{\rho_s}{F})$};
		\node[rectangle, draw] (HB) at (4.5,3.25){$\HB{3}{J} = \pi(\mathcal{A}_{S\leq F})$};

		\draw[-stealth,double, thick] (MC.south) -- (OF.north);
		\draw[-stealth,double, thick] (OF.south) -- (EOF.north) node[right, xshift = 0cm, yshift = .3cm]{$\rho_s$};
		\draw[-stealth,double, thick] (EOF.south) -- (PEOF.north);
		\draw[stealth-stealth,thick] (PEOF.east) -- (HB.west);
		
	\end{tikzpicture}
\caption{A simple flow chart outlining how $\HB{3}{J}$ was constructed.}
\end{figure}

\subsection{General Result when $k = 2$}\label{DGH:ssec:GenRes}

\begin{rmk}
	Throughout this section assume that $J$ is a realizable $2$-set.
\end{rmk}

\begin{definition}
	Let $J$ be a realizable $2$-set. Recursively define the following sets
	\begin{center}
		\begin{tabular}{cc}
			$L^2 = \emptyset$ & $M^2 = J$ \\
			$L^3 = J_s$ & $M^3 = J_s \cup J_F$ \\
			$L^4 = (M^3)_{s}$ & $M^4 = (M^3)_s \cup (M^3 \setminus L^3)_{F}$ \\
			$\vdots$&$\vdots$ \\
			$L^i = (M^{i-1})_s$ & $M^i = (M^{i-1})_{s} \cup (M^{i-1}\setminus L^{i-1})_{F}$
		\end{tabular}
	\end{center}
	The \emph{i-th Bruhat order of $J$} is defined as the set
	\begin{equation}
		\HB{i}{J} = \pi\Big(\lbrace\text{admissible orders on } M^i\text{ which place }L^i \text{ first}\rbrace\Big)
	\end{equation}
	The i-th Bruhat order $B_i(J)$ inherits a partial order as a subposet of $\epaths{\emptyset}{M^i}{i-1}$.
\end{definition}

\begin{rmk}\label{DGH:rmk:SecondBruhat}
	Whenever $L^i = \emptyset$, we get that $\HB{i}{J} = \epaths{\emptyset}{M^i}{i-1}$. In particular, this is true when $i = 2$ and $\HB{2}{J} = \epaths{\emptyset}{J}{1}$.
\end{rmk}

\begin{theorem}\label{DGH:thm:DGHBruhat}
	Let $J$ be a realizable $2$-set. The i-th Bruhat order $B_i(J)$ has the following properties.
		\begin{enumerate}
		\item It is a ranked poset under the the relation $\leq_{\MS}$. The rank function is given by $\vert\Inv(\cdot)\vert$. \label{DGHBruhat:partone}
		
		\item As a ranked poset, $\HB{i}{J}$ has a unique minimal element $r_{\min}$ and unique maximal element $r_{\max}$. Their inversion sets are $\Inv(r_{\min}) = L^{i+1}$ and $\Inv(r_{\max}) = M^{i+1}$. \label{DGHBruhat:parttwo}
		
		\item \label{DGHBruhat:MSCor} Maximal chains in $B_i(J)$ correspond to a subset of orders on $(M^{i+1} \setminus L^{i+1})$. These maximal chains can be extended to admissible orders on $M^{i+1}$ which order $L^{i+1}$ before $(M^{i+1}\setminus L^{i+1})$. Up to elementary equivalences, these are all such admissible orders on $M^{i+1}$ which order $L^{i+1}$ first.
		This gives a surjection between maximal chains in $\HB{i}{J}$ and the poset $\HB{i+1}{J}$.
		
		\item Each element of $\HB{i}{J}$ is uniquely defined by its inversion set. \label{DGHBruhat:partfour}
		\end{enumerate}
\end{theorem}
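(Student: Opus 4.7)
The plan is to induct on $i$. The base case $i=2$ essentially rephrases Theorem \ref{DGH:thm:PathSnk}: by Remark \ref{DGH:rmk:SecondBruhat} we have $\HB{2}{J} = \epaths{\emptyset}{J}{1}$, so parts (1) and (4) are immediate from Theorem \ref{DGH:thm:PathZEquiv}, part (2) matches $L^3 = J_s$ and $M^3 = J_s \cup J_F$ to the minimal and maximal inversion sets of Theorem \ref{DGH:thm:PathSnk}, and part (3) is precisely the construction of $\HB{3}{J}$ developed in Section \ref{DGH:ssec:BPaths}.

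For the inductive step, assume the theorem holds through level $i-1$. Part (3) at level $i-1$ identifies equivalence classes of maximal chains in $\HB{i-1}{J}$ with $\HB{i}{J}$, which then embeds into $\epaths{\emptyset}{M^i}{i-1}$ provided $M^i$ is realizable. I would establish this as an auxiliary lemma using the recursive structure $M^i = (M^{i-1})_s \cup (M^{i-1}\setminus L^{i-1})_F$ together with Lemmas \ref{DGH:lem:Jreal} and \ref{DGH:lem:parts} and Corollary \ref{DGH:cor:badsets}. Parts (1) and (4) then follow by restriction from the ambient poset via Theorem \ref{DGH:thm:PathZEquiv}. The minimum in part (2) is easy: since $((M^i)_s)_F = \emptyset$ by Lemma \ref{DGH:lem:JSF}, the global minimum of $\epaths{\emptyset}{M^i}{i-1}$, whose inversion set is $(M^i)_s = L^{i+1}$, is (up to equivalence) representable by an order placing $L^i$ first, hence lies in $\HB{i}{J}$.

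The heart of the argument, and the expected main obstacle, is showing that $M^{i+1}$ is realizable and that it is the inversion set of the unique maximum of $\HB{i}{J}$. For maximality one breaks the problem into three pieces: (a) realizability of $M^{i+1}$; (b) that every element of $\HB{i}{J}$ has inversion set contained in $M^{i+1}$, which I would deduce from Lemma \ref{DGH:lem:SSrel} by noting that any $X \in (M^i\setminus L^i)_s$ in an inversion set would force some $L^i$-element to follow an $(M^i\setminus L^i)$-element, contradicting the prefix condition; and (c) exhibiting an admissible $M^i$-order placing $L^i$ first whose inversion set equals $M^{i+1}$, which follows from Corollary \ref{DGH:cor:Zequiv}. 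The difficult step (a) I would prove by a packet-by-packet analysis on $C(n,i+2)$, cataloguing the segmentations of each $P_X$ relative to $M^i$ and $L^i$ and ruling out the non-realizable configurations via Corollary \ref{DGH:cor:badsets}. Here the $k=2$ hypothesis enters decisively, since it guarantees $J_{\emptyset} = \emptyset$ at the base of the recursion and so prevents the iterated construction from producing the interposed empty-stratum segmentations that would break realizability; the counterexample at the end of Section \ref{DGH:Sec:GenBruhat} shows this degeneration actually occurs for general realizable $k$-sets. Finally, part (3) at level $i$ follows by the same prepending construction used for $\HB{3}{J}$ in Section \ref{DGH:ssec:BPaths}: an admissible order on $L^{i+1}$ is unique up to equivalence (again by Lemma \ref{DGH:lem:JSF}), so a direct generalization of Lemma \ref{DGH:lem:choice} shows prepending it to an order arising from a maximal chain yields a well-defined surjection onto $\HB{i+1}{J}$ independent of the prepended order.
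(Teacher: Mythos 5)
Your overall architecture matches the paper's: induct on $i$, use the ambient poset $\epaths{\emptyset}{M^i}{i-1}$ via Theorem \ref{DGH:thm:PathZEquiv} for parts (1) and (4), realize part (3) by the prepending construction of Section \ref{DGH:ssec:BPaths}, and reduce part (2) to realizability of $L^{i+1}$ and $M^{i+1}$ plus the min/max identifications. You also correctly flag that the hard work is showing $M^{i+1}$ is realizable and maximal, and you rightly observe (step (b)) that any $r' \in \HB{i}{J}$ must have $\Inv(r') \subseteq M^{i+1}$ — a point the paper elides in Lemma \ref{DGH:lem:unimax}, and your prefix-condition argument for it is sound.

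However, there is a genuine gap in your treatment of the central step (a), and a factual error supporting it. You propose to establish realizability of $M^{i+1}$ by ``cataloguing the segmentations... and ruling out the non-realizable configurations via Corollary \ref{DGH:cor:badsets}.'' This does not suffice. The problematic segmentation that must be excluded is (in the paper's indexing)
\[\{P_X \cap (L_p^{i} \cap M_F^{i}) < P_X \cap (L^{i}_{\emptyset} \cap M^{i}_{F}) < P_X \cap (L^{i}_{\emptyset} \cap M^{i}_{p})\},\]
which, read separately against $M^i$ (giving $F<F<p$) and against $L^i$ (giving $p<\emptyset<\emptyset$), is perfectly legal in each case and is not touched by Corollary \ref{DGH:cor:badsets}. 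Likewise, the segmentation $\{K_{\emptyset}\cap M^i_s < K_{\emptyset}\cap M^i_F < K_s \cap M^i_F\}$ needed for maximality is admissible for both $K$ and $M^i$ individually. Ruling these out requires the paper's Lemmas \ref{DGH:lem:nobadLpMF} and \ref{DGH:lem:nobadK0MF}, which rest on Lemma \ref{DGH:lem:nobadobs} and a recursive descent unwinding the chain $L^i, M^i, L^{i-1}, M^{i-1}, \ldots$ all the way down to $J$ before reaching a contradiction. This descent is the technical core of the theorem, and your proposal leaves it unaccounted for.

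Relatedly, your assertion that the $k=2$ hypothesis ``guarantees $J_{\emptyset} = \emptyset$ at the base of the recursion'' is false: for a non-longest $w$, $J = \Inv(w)$ typically has $J_\emptyset \neq \emptyset$ (e.g.\ $J = \{12\} \subset C(4,2)$ has $234 \in J_\emptyset$). What the $k=2$ hypothesis actually buys, in the paper's descent argument, is that $L^2 = \emptyset$ (hence $L^2_p = \emptyset$) at the bottom of the recursion, together with the fact that for $X \in M^i\setminus L^i$ every sub-$2$-set eventually lands in $J$ (Lemma \ref{DGH:lem:nobadobs}); it is these, not vanishing of $J_\emptyset$, that produce the terminal contradictions in Lemmas \ref{DGH:lem:nobadLpMF} and \ref{DGH:lem:nobadK0MF}. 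Finally, your justification of the minimum — citing Lemma \ref{DGH:lem:JSF} that $(L^{i+1})_F = \emptyset$ — is not quite what is needed; the relevant fact is that $L^i_s \cap M^i_F = \emptyset$ (Lemma \ref{DGH:lem:J1SJ2F}), which is what lets the minimal inversion set be realized by an order placing $L^i$ first.
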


Parts \ref{DGHBruhat:partone} and \ref{DGHBruhat:partfour} are true because they are true for $\epaths{\emptyset}{M^i}{i-1}$. As a subposet, $B_i(J)$ inherits these properties. Part \ref{DGHBruhat:MSCor} was explained in detail for the second Bruhat order in Subsection \ref{DGH:ssec:BPaths}. The process works exactly the same for higher Bruhat orders. Given an order on $(M^{i+1}\setminus L^{i+1})$ arising as a maximal chain, we can extend this to an admissible order on $L^{i+1}$ by pre-appending with any admissible order on $L^{i+1}$. The resulting order on $M^{i+1}$ must be admissible. It corresponds to some element in $\paths{\emptyset}{M^{i+1}}{i}$, which is in bijection with admissible orders on $M^{i+1}$ by Theorem \ref{DGH:thm:AdmPathCorr}.

Part \ref{DGHBruhat:parttwo} requires additional work. First, we must show that $L^i$ and $M^i$ are realizable $i$-sets. Then we must show that they are in fact the inversion sets of minimal and maximal elements in $B_i(J)$. This is not as difficult for $L^i$, so we tackle it first. 

\begin{rmk}
	In general, all of the proceeding proofs rely on an inductive hypothesis that $L^{k}$ and $M^{k}$ are realizable for all $3 \leq k \leq i-1$. The base case when $k = 2$ is trivially true as $L^2 = \emptyset$ and $M^2 = J$. 
\end{rmk}

\begin{lemma}
	For $i > 2$, $L^i$ is realizable if $M^{i-1}$ is realizable.
\end{lemma}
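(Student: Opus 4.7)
The plan is to apply Lemma \ref{DGH:lem:Jreal}(\ref{Jreal:partone}) directly. By the recursive definition given just before the lemma, we have $L^i = (M^{i-1})_s$. Assuming $M^{i-1}$ is a realizable $(i-1)$-set, Lemma \ref{DGH:lem:Jreal}(\ref{Jreal:partone}) immediately yields that $(M^{i-1})_s$ is a realizable $i$-set, and hence $L^i$ is realizable.

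So no new combinatorial argument is required: the definition $L^i \coloneqq (M^{i-1})_s$ was crafted precisely so that Ziegler-style realizability of suffix-sets transfers from one level of the recursion to the next. There is no real obstacle to this step. The genuine difficulty, which the paper presumably tackles in the subsequent lemma, will be to show that $M^i = (M^{i-1})_s \cup (M^{i-1}\setminus L^{i-1})_F$ is realizable; the full-in-$M^{i-1}$ piece is not covered by Lemma \ref{DGH:lem:Jreal} and will presumably require the hypothesis that the base set $J$ is a realizable $2$-set (the assumption highlighted at the start of \S\ref{DGH:ssec:GenRes} and flagged as essential at the end of the section).
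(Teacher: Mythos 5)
Your proof is correct and matches the paper's argument exactly: $L^i = (M^{i-1})_s$ by definition, so Lemma \ref{DGH:lem:Jreal}(\ref{Jreal:partone}) applied to the realizable set $M^{i-1}$ gives realizability immediately. Your surrounding remarks about $M^i$ being the genuinely hard case are also accurate.
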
 
\begin{proof}
	As $L^i = M^{i-1}_s$, it is realizable by Lemma \ref{DGH:lem:Jreal}. 
\end{proof}

\begin{lemma}\label{DGH:lem:unimin}
	The element $r_{\min} \in B_{i-1}(J)$ with inversion set $L^i$ is the unique minimal element in $\HB{i-1}{J}$.
\end{lemma}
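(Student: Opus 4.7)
The plan is to deduce the lemma from Theorem \ref{DGH:thm:PathSnk}, which produces the unique minimum $r^{*}$ of the ambient poset $\epaths{\emptyset}{M^{i-1}}{i-2}$ with $\Inv(r^{*})=(M^{i-1})_{s}=L^{i}$. Since $\HB{i-1}{J}$ inherits its order as a subposet, once we show $r^{*}\in \HB{i-1}{J}$ it will automatically be the minimum, and hence the unique minimal element, of $\HB{i-1}{J}$. So the whole lemma reduces to producing one representative of $r^{*}$ that is an admissible $M^{i-1}$-order placing $L^{i-1}$ first.

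To do this I would lift $r^{*}$ via Corollary \ref{DGH:cor:Lifting} to $\tilde{r}\in B(n,i-1)$ with $\Inv(\tilde r)=L^{i}$, then apply Lemma \ref{DGH:lem:SSrel}(3): $\tilde r$ has a representative on $C(n,i-1)$ with $L^{i-1}$ and $M^{i-1}$ as prefixes (in that order) exactly when
\[
(L^{i-1})_{s}\cup (M^{i-1})_{s}\subseteq L^{i}\quad\text{and}\quad (L^{i-1})_{p}\cup (M^{i-1})_{p}\subseteq (L^{i})^{c}.
\]
Restricting such a representative to $M^{i-1}$ yields the admissible $M^{i-1}$-order I need. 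Since $L^{i}=(M^{i-1})_{s}$, the required conditions split into three pieces. The two disjointnesses $(L^{i-1})_{p}\cap (M^{i-1})_{s}=\emptyset$ and $(M^{i-1})_{p}\cap (M^{i-1})_{s}=\emptyset$ are easy combinatorics: a proper prefix and a proper suffix of a packet $P_{X}$ cannot coincide, and the nesting $L^{i-1}\subseteq M^{i-1}$ prevents a proper prefix of $P_{X}\cap L^{i-1}$ from sitting inside a proper suffix of $P_{X}\cap M^{i-1}$. The third, $(L^{i-1})_{s}\subseteq (M^{i-1})_{s}$, is equivalent via Lemma \ref{DGH:lem:ABpart} to $(L^{i-1})_{s}\cap (M^{i-1})_{F}=\emptyset$, and this is the main obstacle.

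My plan for the hard inclusion is a direct contradiction argument that unpacks the recursive definitions. Suppose $X=\{x_{1}<\cdots<x_{i}\}$ belonged to $(L^{i-1})_{s}\cap (M^{i-1})_{F}$. Fullness gives $\widehat{x}_{i}\in M^{i-1}$, and since $P_{X}\cap L^{i-1}$ is a proper suffix missing $\widehat{x}_{i}$, we would get $\widehat{x}_{i}\in M^{i-1}\setminus L^{i-1}=(M^{i-2}\setminus L^{i-2})_{F}$; fullness of $\widehat{x}_{i}$ in $M^{i-2}$ then forces $X\setminus\{x_{1},x_{i}\}\in M^{i-2}$. On the other hand, the suffix $P_{X}\cap L^{i-1}$ contains its last element $\widehat{x}_{1}$, so $\widehat{x}_{1}\in L^{i-1}=(M^{i-2})_{s}$; hence $P_{\widehat{x}_{1}}\cap M^{i-2}$ is a proper suffix of $P_{\widehat{x}_{1}}$ and in particular misses its first lex element $\widehat{x}_{1}\widehat{x}_{i}=X\setminus\{x_{1},x_{i}\}$, contradicting the previous step. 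The base case $i=3$ is vacuous: $L^{2}=\emptyset$ makes $\HB{2}{J}=\epaths{\emptyset}{J}{1}$, and Theorem \ref{DGH:thm:PathSnk} directly hands over the minimum with inversion set $J_{s}=L^{3}$.
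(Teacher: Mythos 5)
Your proof is correct and has the same overall architecture as the paper's: identify the minimum $r^{*}$ of the ambient poset $\epaths{\emptyset}{M^{i-1}}{i-2}$ via Theorem \ref{DGH:thm:PathSnk}, then verify $r^{*}\in\HB{i-1}{J}$ by checking the two inclusions $L^{i-1}_{p}\cap L^{i}=\emptyset$ and $L^{i-1}_{s}\subseteq L^{i}$. Where you differ is in how you establish the hard inclusion $L^{i-1}_{s}\subseteq L^{i}$, i.e.\ $L^{i-1}_{s}\cap M^{i-1}_{F}=\emptyset$. The paper factors this out as the standalone Lemma \ref{DGH:lem:J1SJ2F} (applied with $J=M^{i-2}$), which in turn reduces to the forbidden-segmentation Corollary \ref{DGH:cor:badsets} via the convexity framework of Lemma \ref{DGH:lem:parts}. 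You instead unroll the recursion directly: take $X=\{x_1<\cdots<x_i\}$ in the alleged intersection, trace $\widehat{x}_{i}$ into $(M^{i-2}\setminus L^{i-2})_{F}$ and $\widehat{x}_{1}$ into $(M^{i-2})_{s}$, and observe these force the element $X\setminus\{x_1,x_i\}$ both into and out of $M^{i-2}$. This is the same combinatorial observation underlying Corollary \ref{DGH:cor:badsets} (chase the lex-extreme $(i-2)$-subset through two packets), just stated without the segmentation vocabulary. Your route is more self-contained for this one lemma; the paper's abstraction pays off because Lemma \ref{DGH:lem:J1SJ2F} and Corollary \ref{DGH:cor:badsets} are reused several more times in the proofs of Lemmas \ref{DGH:lem:Mireal}, \ref{DGH:lem:KsMsreal}, and \ref{DGH:lem:nobadLpMF}. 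Your routing of the prefix construction through Corollary \ref{DGH:cor:Lifting} and Lemma \ref{DGH:lem:SSrel}(3) rather than Lemma \ref{DGH:lem:Zequiv} directly is equivalent; one should just note that after restricting the resulting $\rho\in A(n,i-1)$ to $M^{i-1}$, the inversion set remains $L^{i}$ (no element of $M^{i-1}_F$ is inverted since $\Inv(\rho)=L^{i}\subseteq M^{i-1}_s$), so it really is a representative of $r^{*}$.
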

\begin{proof}
	By Theorem \ref{DGH:thm:PathSnk} we already know that the unique minimal element $r_{\min}$ of $\epaths{\emptyset}{M^{i-1}}{i-2}$ has $\Inv(r_{\min}) = (M^{i-1})_s$. We just need to show that $r_{\min} \in \HB{i-1}{J}$. This is true if $L_p^{i-1} \cap \Inv(r_{\min}) = \emptyset$ and $L^{i-1}_s \subseteq \Inv(r_{\min})$.
	
	Lemma \ref{DGH:lem:ABpart} with $A = L^{i-1}$ and $B = M^{i-1}$ tells us that 
	\begin{align*}
		L^i &\coloneqq M^{i-1}_s \\
		&= (M^{i-1}_s \cap L^{i-1}_s) \cup (M^{i-1}_s \cap L^{i-1}_{\emptyset}).
	\end{align*}
	This shows that $L^{i-1}_p \cap \Inv(r_{\min}) = \emptyset$. 
	
	Lemma \ref{DGH:lem:ABpart} also shows that 
	\begin{equation}\label{unimin:Ls1}
		L^{i-1}_s = (L^{i-1}_s \cap M^{i-1}_s) \cup (L^{i-1}_s \cap M^{i-1}_F).
	\end{equation}
	If $X \in L^{i-1}_s \cap M^{i-1}_F$, then because $L^{i-1} \subseteq M^{i-1}$, $P_X$ has the following segmentation (see proof of Lemma \ref{DGH:lem:J1SJ2F} for more detail)
	\begin{equation*}
		\{P_X \cap M^{i-2}_F < P_X \cap M^{i-2}_s\}.
	\end{equation*}
	Such segmentations are disallowed by Lemma \ref{DGH:cor:badsets}. Therefore Equation \eqref{unimin:Ls1} becomes
	\begin{equation}\label{unimin:Ls2}
		L^{i-1}_s = (L^{i-1}_s \cap M^{i-1}_s).
	\end{equation}
	This shows that $L^{i-1}_s \subseteq L^{i} \subseteq \Inv(r_{\min})$. 
	
	Hence $L^{i}$ satisfies the criterion in Lemma \ref{DGH:lem:Zequiv}. Any path with $L^i$ as its inversion set is equivalent to one which passes through $L^{i-1}$ and then $M^{i-1}$. Thus there exists an admissible $M^{i-1}$ order $\rho \in r_{\min}$ that puts $L^{i-1}$ before $(M^{i-1}\setminus L^{i-1})$ as desired.  
\end{proof}

The emptiness of $L^{i-1}_s \cap M_F^{i-1}$ played a crucial role in the proof of Lemma \ref{DGH:lem:unimin}. Because we will use it several more times, it is worth stating a slight generalization as a standalone result.

\begin{lemma}\label{DGH:lem:J1SJ2F}
	Let $U = J_s$ for a realizable $k$-set $J$ and let $U'$ be any realizable $(k+1)$-set satisfying $U \subseteq U' \subseteq J_s \cup J_F$. Then $U'_F \cap U_s = \emptyset$.
\end{lemma}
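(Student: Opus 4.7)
The plan is to argue by contradiction, leveraging the segmentation analysis of Lemma \ref{DGH:lem:parts} together with the list of forbidden segmentations in Corollary \ref{DGH:cor:badsets}. The whole argument should take only a few lines once the types are unpacked: $U = J_s$ and $U'$ are both collections of $(k+1)$-sets, so $U'_F$ and $U_s$ live in the set of $(k+2)$-sets, and for any $X \in U'_F \cap U_s$ the relevant object is the $(k+1)$-packet $P_X$ viewed with respect to the segmentation of $J$.

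Concretely, I would proceed as follows. First, suppose for contradiction that there exists $X \in U'_F \cap U_s$. Since $X \in U'_F$, every element of the packet $P_X$ belongs to $U'$, and since $U' \subseteq J_s \cup J_F$ this forces $P_X \subseteq J_s \cup J_F$. In the language of Lemma \ref{DGH:lem:parts} applied to the realizable $k$-set $J$ and the $(k+1)$-packet $P_X$, this means $P_X \cap J_p = \emptyset$ and $P_X \cap J_\emptyset = \emptyset$, so the only possible segmentations of $P_X$ are
\[
\{P_X \cap J_s < P_X \cap J_F\}\qquad\text{or}\qquad\{P_X \cap J_F < P_X \cap J_s\}.
\]

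Second, I would use $X \in U_s = (J_s)_s$ to locate the $J_s$-portion. By definition of $(\,\cdot\,)_s$, the intersection $P_X \cap J_s = P_X \cap U$ is a proper suffix of $P_X$ in the lexicographic order. In particular $P_X \cap J_s$ is nonempty and its complement in $P_X$, which lies in $P_X \cap J_F$, is also nonempty and forms a prefix of $P_X$. This rules out the first segmentation above and pins the segmentation of $P_X$ down to
\[
\{P_X \cap J_F < P_X \cap J_s\}, \qquad P_X \cap J_p = \emptyset, \qquad P_X \cap J_\emptyset = \emptyset,
\]
with both of the displayed parts strictly nonempty.

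Finally, this is precisely the segmentation forbidden by Equation \eqref{badsets:Fs} of Corollary \ref{DGH:cor:badsets}: a nonempty $P_X \cap J_F$ sitting lex-below a nonempty $P_X \cap J_s$, with no $J_p$-piece present. The contradiction completes the proof. There is no real obstacle here; the only thing one must take care with is the bookkeeping that $U'_F$ and $U_s$ are sets of $(k+2)$-sets (so that Lemma \ref{DGH:lem:parts} and Corollary \ref{DGH:cor:badsets}, applied to the $(k+1)$-packet $P_X$ with respect to $J$, are the right tools to invoke).
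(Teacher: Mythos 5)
Your argument is correct and matches the paper's proof: both pin the segmentation of $P_X$ down to $\{P_X \cap J_F < P_X \cap J_s\}$ with both blocks nonempty and $P_X \cap J_p = \emptyset$, and then invoke Corollary \ref{DGH:cor:badsets} to reach a contradiction. The only cosmetic difference is that the paper additionally cites Lemma \ref{DGH:lem:JSF} to emphasize that $P_X$ mixes $J_s$ and $J_F$, a fact you derive directly from $X \in U'_F$ and $X \in (J_s)_s$.
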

\begin{proof}
	The proof is exactly as laid out in Lemma \ref{DGH:lem:unimin}. We provide more detail here. Take $X \in U'_F \cap U_s$. By Lemma \ref{DGH:lem:JSF}, we know that $P_X$ must contain a mix of elements from $J_s$ and $J_F$. In particular, the lexicographic maximal element of $P_X$ must be in $J_s$, while the lexicographic minimal element of $P_X$ must be in $J_F$. According to Lemma \ref{DGH:lem:parts}, $P_X$ must have the following segmentation
	\begin{equation}
		\{P_X \cap J_F < P_X \cap J_s\}
	\end{equation}
	Such a segmentation is disallowed by Corollary \ref{DGH:cor:badsets}. Hence, $X$ does not exist and $U'_F \cap U_s = \emptyset$.
\end{proof}

So far we have repeatedly used Corollary \ref{DGH:cor:badsets} in order to rule out the existence of various kinds of packets. This was necessary when proving that $L^i$ was the inversion set of the unique minimal element of $\HB{i-1}{J}$. We will need to prove the non-existence of two more kinds of packets in order to prove that $M^i$ is the inversion set of the unique maximal element of $\HB{i-1}{J}$. The realizability of $M^i$ depends on showing that no $i$-packets have the following segmentation:
\begin{equation}\label{DGH:eqn:badLiMi}
	\{P_X \cap (L_p^{i-1} \cap M_F^{i-1}) < P_X \cap (L^{i-1}_{\emptyset} \cap M^{i-1}_{F}) < P_X \cap (L^{i-1}_{\emptyset} \cap M^{i-1}_{p})\}.
\end{equation}
While the maximality of $M^i$ in the single step inclusion order will depend on showing that there are no $(i+1)$-packets with the following segmentation:
\begin{equation}\label{DGH:eqn:badKsMs}
	\{K_{\emptyset} \cap M^i_s < K_{\emptyset} \cap M^i_F < K_s \cap M^i_F\}.
\end{equation}

Both proofs will critically depend upon the assumption that the $J$ which we use to define $L^i$ and $M^i$ is a realizable $2$-set. It will also depend upon the following observation. 

\begin{lemma}\label{DGH:lem:nobadobs} For any $X \in M^i\setminus L^i$, every $(i-1)$-subset of $X$ is in $M^{i-1}\setminus L^{i-1}$. Moreover, any $(i-l)$-subset of $X$ is in $M^{i-l}\setminus L^{i-l}$ for $1 \leq l \leq i-2$.
\end{lemma}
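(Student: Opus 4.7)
The plan is to reduce the lemma to a direct unwinding of the recursion $M^j = L^j \cup (M^{j-1}\setminus L^{j-1})_F$, then to bootstrap via a short induction on $l$. First I would establish the case $l = 1$: given $X \in M^i \setminus L^i$, the recursive definition forces $X \in (M^{i-1}\setminus L^{i-1})_F$, since $L^i$ is one of the two pieces in the union defining $M^i$. But $X \in (M^{i-1}\setminus L^{i-1})_F$ unpacks, by Definition \ref{DGH:def:JsJpJFJ0}, to $P_X \subseteq M^{i-1}\setminus L^{i-1}$, which is exactly the desired conclusion.

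For $l \geq 2$, I would induct on $l$, relying on the observation that the $l = 1$ argument is actually an index-free schema: at any level $j \geq 3$, every element of $M^j \setminus L^j$ has its entire packet $P_X$ contained in $M^{j-1}\setminus L^{j-1}$. Given the inductive hypothesis that each $(i-l+1)$-subset of $X$ lies in $M^{i-l+1}\setminus L^{i-l+1}$, I would fix an arbitrary $(i-l)$-subset $Y \subseteq X$, interpolate an $(i-l+1)$-subset $Z$ with $Y \subset Z \subseteq X$, and apply the schema at $j = i-l+1$ to $Z$; this gives $Y \in M^{i-l}\setminus L^{i-l}$.

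I do not expect this proof to encounter any real obstacles: the content is essentially bookkeeping around the recursion, and no delicate use of the realizability of $J$ is required beyond what is already baked into the definitions of $L^j$ and $M^j$. The one point to verify is that the level $j = i - l + 1$ remains in the range $j \geq 3$ where the schema is available, which holds precisely because $l \leq i-2$; at the extreme $l = i - 2$, the penultimate step deposits $Z$ in $M^3 \setminus L^3 \subseteq J_F$, so the final inclusion $P_Z \subseteq J = M^2 \setminus L^2$ is just the defining fullness condition for elements of $J_F$.
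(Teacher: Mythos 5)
Your proof is correct and takes essentially the same approach as the paper: both extract $M^i \setminus L^i \subseteq (M^{i-1}\setminus L^{i-1})_F$ directly from the recursion, deduce $P_X \subseteq M^{i-1}\setminus L^{i-1}$, and iterate. Your write-up makes the induction and the boundary at $j = 3$ explicit where the paper simply says ``repeatedly apply these definitions,'' but there is no difference in substance.
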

\begin{proof}
	By definition $M^i\setminus L^i = (M^{i-1}\setminus L^{i-1})_F$. So $P_X \subset M^{i-1}\setminus L^{i-1}$. Hence, every $(i-1)$-subset of $X$ is in $M^{i-1}\setminus L^{i-1}$. Repeatedly apply these definitions in order to prove that every $(i-l)$-subset of $X$ is in $M^{i-l}\setminus L^{i-l}$.  
\end{proof}

\begin{lemma}\label{DGH:lem:nobadLpMF}
	For $2 \leq i \leq n-1$ there are no $(i+1)$-packets with the following segmentation:
	\begin{equation}\label{DGH:eq:badLpMF}
		\{P_X \cap (L_p^{i} \cap M_F^{i}) < P_X \cap (L^{i}_{\emptyset} \cap M^{i}_{F}) < P_X \cap (L^{i}_{\emptyset} \cap M^{i}_{p})\}.
	\end{equation}
\end{lemma}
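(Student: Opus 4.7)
The plan is to assume for contradiction that some $(i+1)$-packet $P_X$ admits the displayed segmentation with all three pieces non-empty, and to derive that the lex-largest $i$-subset of $\widehat{x}_1$ lies in $M^i$, contradicting the condition $\widehat{x}_1 \in M^i_p$. Write $X = \{x_1 < \cdots < x_{i+2}\}$ and $y_k = \widehat{x}_k$; the extreme elements of $P_X$ then force $y_{i+2} \in L^i_p \cap M^i_F$ and $y_1 \in L^i_\emptyset \cap M^i_p$, while the middle piece supplies some $y_j \in L^i_\emptyset \cap M^i_F$ with $j \in \{2,\ldots,i+1\}$. Because $y_{i+2}\in L^i_p$, the lex-smallest element of $P_{y_{i+2}}$, namely $\{x_1,\ldots,x_i\}$, lies in $L^i$; because $y_1 \in L^i_\emptyset \cap M^i_p$, the lex-smallest element of $P_{y_1}$, namely $\{x_2,\ldots,x_{i+1}\}$, lies in $M^i\setminus L^i = (M^{i-1}\setminus L^{i-1})_F$.

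A useful observation, obtained by iterating the identity $M^i\setminus L^i = (M^{i-1}\setminus L^{i-1})_F$ down to the base case $M^2\setminus L^2 = J$, is that an $i$-set $Y$ lies in $M^i\setminus L^i$ if and only if every $2$-subset of $Y$ lies in $J$. The forward direction is Lemma \ref{DGH:lem:nobadobs} with $l = i-2$, and the converse is an easy induction on $i$ from the definition of $(\cdot)_F$. Applied to $\{x_2,\ldots,x_{i+1}\}$, this shows $\{x_k,x_l\} \in J$ for all $2 \leq k < l \leq i+1$. Next, $j = i+1$ is excluded, because $\{x_1,\ldots,x_i\}$ is an $i$-subset of $y_{i+1}$ and lies in $L^i$, contradicting $y_{i+1} \in L^i_\emptyset$. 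So $j \leq i$; and since $y_j \in L^i_\emptyset \cap M^i_F$, every $i$-subset of $y_j$ lies in $M^i\setminus L^i$. In particular, for each $k \neq j$, the set $X\setminus\{x_j,x_k\}$ is in $M^i\setminus L^i$, so every pair of its elements lies in $J$.

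The remaining task is to show that every $2$-subset of $\{x_3,\ldots,x_{i+2}\}$ lies in $J$. Pairs inside $\{x_3,\ldots,x_{i+1}\}$ are already covered. For a pair $\{x_k,x_{i+2}\}$ with $k \in \{3,\ldots,i+1\}$ and $k \neq j$, pick any $k' \in \{1,\ldots,i+2\}\setminus\{j,k,i+2\}$, which is non-empty since $i \geq 2$; then $\{x_k,x_{i+2}\}$ is a $2$-subset of $X\setminus\{x_j,x_{k'}\}$ and therefore lies in $J$. The main obstacle is the pair $\{x_j,x_{i+2}\}$ itself, which arises only when $j \in \{3,\ldots,i\}$ and cannot be reached this way because no $i$-subset of $y_j$ contains $x_j$. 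I will handle it by a transitivity step: pick any $l \in \{j+1,\ldots,i+1\}$; then $\{x_j,x_l\} \in J$ from the pairs inside $\{x_2,\ldots,x_{i+1}\}$, and $\{x_l,x_{i+2}\} \in J$ by the previous case. Realizability of $J$ forces its intersection with the $2$-packet $P_{\{x_j,x_l,x_{i+2}\}}$ to be a prefix or suffix, and the only such option containing both $x_jx_l$ and $x_lx_{i+2}$ is the full packet; hence $\{x_j,x_{i+2}\} \in J$.

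Combining all cases, every $2$-subset of $\{x_3,\ldots,x_{i+2}\}$ lies in $J$; by the equivalence from the second paragraph, $\{x_3,\ldots,x_{i+2}\} \in M^i\setminus L^i \subseteq M^i$. But $\{x_3,\ldots,x_{i+2}\}$ is the lex-largest element of $P_{y_1}$, whereas $y_1 \in M^i_p$ demands that the lex-largest element of $P_{y_1}$ lie outside $M^i$. This contradiction establishes the lemma.
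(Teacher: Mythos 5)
Your proof is correct, and it takes a genuinely different route from the paper's. You first isolate the clean characterization
\[
M^i\setminus L^i \;=\; \bigl\{ Y \in C(n,i) \;\bigm|\; \text{every } 2\text{-subset of } Y \text{ lies in } J \bigr\},
\]
obtained by iterating $M^i\setminus L^i = (M^{i-1}\setminus L^{i-1})_F$ down to $M^2\setminus L^2 = J$; only then do you chase pairs. Given the segmentation, you feed $\{x_2,\ldots,x_{i+1}\}\in M^i\setminus L^i$ (from $\widehat{x}_1 \in L^i_\emptyset\cap M^i_p$) and the $i$-subsets of the middle element $\widehat{x}_j$ into the characterization, and close the one remaining pair $\{x_j,x_{i+2}\}$ by a transitivity argument on a $2$-packet of $J$ — this is precisely the step that uses the realizability of $J$ as a $2$-set. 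The conclusion $\{x_3,\ldots,x_{i+2}\}\in M^i$ directly contradicts $\widehat{x}_1\in M^i_p$. The paper instead tracks a shrinking chain of sets $Y_l = X\setminus\{x_{i+2-l},\ldots,x_{i+2}\}$ down the recursion, proving by induction on $l$ that $Y_l$ alternates between $L^{i-l+1}$ and $L^{i-l}_p\cap M^{i-l}_F$ depending on the parity of $l$, and reaches a contradiction at $l=i-2$ by a case split on whether $i$ is odd or even. Your version avoids the parity bookkeeping entirely, avoids invoking Lemmas \ref{DGH:lem:JSF}, \ref{DGH:lem:J1SJ2F}, \ref{DGH:lem:ABpart} inside the induction, and makes the dependence on $J$ being a $2$-set transparent (it is exactly what makes the $2$-subset characterization and the transitivity step work). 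One small remark: as in the paper's own proof, the argument as written tacitly assumes $i\geq 3$ so that the middle block can be non-empty; for $i=2$ the first block is forced empty since $L^2_p=\emptyset$, so the statement is vacuous — worth a one-line note but not a gap.
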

\begin{proof}
	Without loss of generality let $X = \lbrace 1,\ldots, i+2\rbrace$. Suppose that $P_X$ segmented as in Equation \eqref{DGH:eq:badLpMF}. Our ultimate goal is to arrive at a contradiction. In order to do so we must prove two subclaims. The first is that:
	\begin{align}\label{nobadLpMF:C1}
		\begin{split}
		\widehat{1}, \widehat{2} \in L^i_{\emptyset} \cap M^i_p \\
		\widehat{i+1}, \widehat{i+2} \in L^i_p\cap M^i_F.
		\end{split}
	\end{align}
	By assumption $\widehat{i+2} \in L^i_{p}\cap M^i_F$. This implies that $\widehat{i+1}\widehat{i+2} \in L^i$. As $\widehat{i+1}\widehat{i+2} \in \widehat{i+1}$, we have that $\widehat{i+1}\notin L^{i}_{\emptyset}$. Furthermore, $\widehat{i+1} \in M^i_F$, again by assumption. The only possibility, given the segmentation above, is that $\widehat{i+1} \in L^i_p \cap M^i_F$. Similar reasoning applied to $\widehat{1}$ establishes that $\widehat{2} \in M^i_p$ and thus $\widehat{2} \in L^i_{\emptyset} \cap M^i_p$.
	
	Let $2 \leq l \leq i-2$. The second subclaim is that
	\begin{align}
		Y_l &\coloneqq X\sdiff{i+2-l,\ldots,i+2} \in M_s^{i-l} = L^{i-l+1}, \text{ when l is odd}\label{nobadLpMF:odd} \\
		Y_l &\coloneqq X\sdiff{i+2-l,\ldots,i+2} \in L_p^{i-l} \cap M_F^{i-l} \text{ when l is even}\label{nobadLpMF:even}
	\end{align}	
	
	To prove this second subclaim we induct on $l$. When $l = 1$, the base case states that $\widehat{i+1}\widehat{i+2} \in L^i$, which was just proved. If $l$ is odd, then our induction hypothesis states that $Y_{l-1} \in L_p^{i-l} \cap M_F^{i-l}$. Of course, the lexicographic minimal element in the packet generated by $Y_{l-1}$
	is $Y_{l}$. Hence $Y_l \in L^{i-l}$ as desired. 
		
	Now suppose that $l$ is even. By our inductive hypothesis $Y_{l-1} \in M_s^{i-(l-1)}$. Hence $Y_l \notin M^{i-(l-1)}$ by the definition of $M^{i-(l-1)}$. Lemma \ref{DGH:lem:ABpart} combined with Lemmas \ref{DGH:lem:JSF} and \ref{DGH:lem:J1SJ2F} implies that: 
	\begin{equation}\label{nobadLpMF:evensub}
		Y_l \in M_p^{i-l} \cup M_{\emptyset}^{i-l} \cup (M_F^{i-l} \cap L_p^{i-l}).
	\end{equation}
	To prove our subclaim first remember that $\widehat{1} \in L^i_{\emptyset}\cap M^i_F$. So $\widehat{1}\widehat{i+2} \in M^i\setminus L^i$. Lemma \ref{DGH:lem:nobadobs} implies that every subset of $\widehat{1}\widehat{i+2}$ is in $M^k\setminus L^k$ for an appropriate $k$. Therefore, every packet of every subset of $\widehat{i+2}$ has its maximal element in an appropriate $M^k\setminus L^k$. In particular, this is true of for the packet generated by $Y_l$, whose maximal element is in $M^{i-l}$. It follows that $Y_l$ is not in $M_p^{i-1}$ nor $M^{i-l}_{\emptyset}$. In light of \eqref{nobadLpMF:evensub}, we conclude that $Y_l \in M_F^{i-l} \cap L_p^{i-l}$ as desired. This completes our induction.
	
	To complete the proof of this lemma we resort to analyzing the cases where $i$ is odd and where $i$ is even separately.
	\begin{description}
		\item[$i$ is odd] Set $l = i-2$. Because $l$ is odd, Claim 2 gives $\lbrace 1,2,3 \rbrace \in M_s^2 = J_s$. Therefore $\lbrace 1,2\rbrace \notin J$. Based on the segmentation in \eqref{DGH:eq:badLpMF} there must be some $j$ with $\widehat{j} \in L_{\emptyset}^i \cap M_F^i$. We know that $2 < j < i+1$ due to \eqref{nobadLpMF:C1}. We know that $\widehat{j} \in M^i \setminus L^i$. As $2 < j < i+1$, Lemma \ref{DGH:lem:nobadobs} implies that $\lbrace 1,2\rbrace \in J$; a contradiction.
		\item[$i$ is even] Set $l = i-2$. Because $l$ is even, Claim 2 gives $\lbrace 1,2,3\rbrace \in M^2_F\cap L_p^2$. But $L^2 = \emptyset$, so $L^2_p = \emptyset$. This leads to an obviously nonsensical result.
	\end{description}
\end{proof}

\begin{lemma}\label{DGH:lem:nobadK0MF}
	Let $i \geq 2$. Pick any realizable $i$-set $K$ where $L^i \subseteq K \subseteq M^i$. No $i$-packet $P_X$ is segmented as:
	\begin{equation}\label{nobadK0MF:eqn:seg}
		\{K_{\emptyset}\cap M^i_s < K_{\emptyset}\cap M^i_F < K_s \cap M^i_F\}
	\end{equation} 
\end{lemma}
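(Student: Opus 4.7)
Suppose toward contradiction that $P_X$ has the forbidden segmentation with all three segments non-empty; take $X=\{1,\ldots,i+2\}$ without loss of generality, so that $P_X=\{\widehat{i+2}<_{\lex}\widehat{i+1}<_{\lex}\cdots<_{\lex}\widehat{1}\}$. The first step is to pin down the four corner elements of the segmentation. Since $\widehat{i+2}\in M^i_s$, the lex-smallest element $\widehat{i+1}\widehat{i+2}=\{1,\ldots,i\}$ of $P_{\widehat{i+2}}$ is not in $M^i$; this same element is also the lex-smallest of $P_{\widehat{i+1}}$, forcing $\widehat{i+1}\notin M^i_F$ and hence $\widehat{i+1}\in K_\emptyset\cap M^i_s$. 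Dually, $\widehat{1}\in K_s$ implies that the lex-largest element $\widehat{1}\widehat{2}=\{3,\ldots,i+2\}$ of $P_{\widehat{1}}$ lies in $K$; as this is also the lex-largest of $P_{\widehat{2}}$, we cannot have $\widehat{2}\in K_\emptyset$, giving $\widehat{2}\in K_s\cap M^i_F$.

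The second step exploits the characterization $M^i\setminus L^i=\{Y\in C(n,i):\binom{Y}{2}\subseteq J\}$, which follows by iterating $M^j\setminus L^j=(M^{j-1}\setminus L^{j-1})_F$ down to $M^2\setminus L^2=J$ (this is essentially Lemma \ref{DGH:lem:nobadobs} read in reverse). For each $a\in\{1,2\}$ and $b\in\{i+1,i+2\}$, the $i$-subset $\widehat{a}\widehat{b}$ lies in $M^i$ (since $\widehat{a}\in M^i_F$) and outside $K\supseteq L^i$ (since $\widehat{b}\in K_\emptyset$), so $\widehat{a}\widehat{b}\in M^i\setminus L^i$. Taking the union of the pair-sets of these four $i$-subsets one checks that every $2$-subset of $X$ except possibly $\{1,2\}$ and $\{i+1,i+2\}$ lies in $J$. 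On the other hand $\{1,\ldots,i\}=\widehat{i+1}\widehat{i+2}\notin M^i$ (from step one), hence $\{1,\ldots,i\}\notin M^i\setminus L^i$, so some $2$-subset of $\{1,\ldots,i\}$ must fail to be in $J$; the only candidate among its $2$-subsets not already established to be in $J$ is $\{1,2\}$, so $\{1,2\}\notin J$.

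Finally I exploit the middle segment, which is non-empty by hypothesis. Pick any $\widehat{c}$ in the middle; the preceding steps force $c\in\{3,\ldots,i\}$, since the last segment contains $\widehat{1},\widehat{2}$ and the first segment contains $\widehat{i+1},\widehat{i+2}$. Because $\widehat{c}\in M^i_F\cap K_\emptyset$, the full packet $P_{\widehat{c}}$ is contained in $M^i\setminus K\subseteq M^i\setminus L^i$. Choose any $k\in\{3,\ldots,i+2\}\setminus\{c\}$, which is non-empty for $i\geq 2$. Then $\widehat{c}\widehat{k}=X\setminus\{c,k\}$ contains both $1$ and $2$ since $c,k\geq 3$, so $\widehat{c}\widehat{k}\in M^i\setminus L^i$ forces $\{1,2\}\in J$, contradicting step two. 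The main technical obstacle will be verifying the recursive characterization of $M^i\setminus L^i$ cleanly and doing the pair-coverage bookkeeping in step two with care; once those are in place the final contradiction is essentially a one-line calculation, and the crucial use of $J$ being a realizable $2$-set enters only through the base case $M^2\setminus L^2=J$ of the characterization.
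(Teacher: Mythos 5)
Your proof is correct, and it takes a genuinely streamlined route compared to the paper's. The paper's argument, after pinning down the same four corner elements $\widehat{1},\widehat{2}\in K_s\cap M^i_F$ and $\widehat{i+1},\widehat{i+2}\in K_\emptyset\cap M^i_s$, runs an induction on the chain $Y_l=X\setminus\{i+2-l,\ldots,i+2\}$, proving an alternating (parity-dependent) membership statement, and then reaches the final contradiction differently depending on the parity of $i$: one case lands in $L^2_p=\emptyset$ (an immediate absurdity), while the other case derives $\{1,2\}\notin J$ and contradicts it via a middle element of the segmentation. You instead compress the recursion $M^j\setminus L^j=(M^{j-1}\setminus L^{j-1})_F$ once and for all into the two-sided characterization $M^i\setminus L^i=\{Y\in C(n,i):\binom{Y}{2}\subseteq J\}$ — the forward inclusion is exactly Lemma \ref{DGH:lem:nobadobs}, and the reverse is the same induction run the other way. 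With this in hand, the four sets $\widehat{a}\widehat{b}\in M^i\setminus L^i$ cover all pairs in $X$ except $\{1,2\}$ and $\{i+1,i+2\}$, the fact $\{1,\ldots,i\}\notin M^i$ forces $\{1,2\}\notin J$, and any middle element $\widehat{c}$ forces $\{1,2\}\in J$ — one uniform contradiction for all $i$. This buys a parity-free argument, sidesteps the auxiliary lemmas \ref{DGH:lem:ABpart}, \ref{DGH:lem:JSF}, \ref{DGH:lem:J1SJ2F} that the paper invokes for the base case of its chain, and makes it transparent exactly where the $k=2$ hypothesis enters (namely through the base case $M^2\setminus L^2=J$ of the characterization, which is what degrades for larger $k$). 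One small point worth spelling out: for $i=2$ the middle segment index set $\{3,\ldots,i\}$ is empty, so the contradiction is already reached at the end of step one; your step three handles this implicitly but it would be cleaner to flag it.
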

\begin{proof}
	The proof is very similar to the proof of Lemma \ref{DGH:lem:nobadLpMF}. We again argue by way of contradiction. Assume that there is an $(i+1)$-packet $P_X$ segmented as in Equation \eqref{DGH:eqn:badKsMs}. 
	
	The following analog to \eqref{nobadLpMF:C1} holds:
	\begin{align}\label{nobadK0MF:C1}
		\begin{split}
			\widehat{1}, \widehat{2} \in K_s \cap M^i_F \\
			\widehat{i+1}, \widehat{i+2} \in K_{\emptyset} \cap M^i_s.
		\end{split}
	\end{align}
	The proof is similar and left to the reader.
		
	By assumption $\widehat{1} \in K_s \cap M^i_F$. As the lexicographic minimal subset of $\widehat{1}$ of size $i$, we know that
	\begin{equation}
		\widehat{1}\widehat{i+2} \in M\setminus K \subseteq (M^{i-1}\setminus L^{i-1})_F.
	\end{equation}
	Lemma \ref{DGH:lem:nobadobs} tells us that every $(i-1-l)$ subset of $\widehat{1}\widehat{i+2}$ is in $M^{i-1-l}$.  
	
	As in the proof of Lemma \ref{DGH:lem:nobadLpMF} we claim the following for all $1 \leq l \leq i-2$:
	\begin{align}
		X&\sdiff{i+2-l,\ldots,i+2} \in M_s^{i-l} = L^{i-l+1} \tag{\text{odd l}} \\
		X&\sdiff{i+2-l,\ldots,i+2} \in L^{i-l}_p \cap M^{i-l}_F \tag{\text{even l}}.
	\end{align}
	The proof once again proceeds via induction. The base case is when $l = 1$. When $l = 1$, we must show that $\widehat{i+1}\widehat{i+2} \in M_s^{i-1}$. Note that $\widehat{i+1}\widehat{i+2}$ is the lexicographic minimal element in $\widehat{i+2}$. Additionally, we have assumed that $\widehat{i+2} \in K_{\emptyset} \cap M^i_s$. Therefore, $\widehat{i+1}\widehat{i+2} \notin M^i$ by definition. Thus Lemma \ref{DGH:lem:ABpart} combined with Lemmas \ref{DGH:lem:JSF} and \ref{DGH:lem:J1SJ2F} shows that $\widehat{i+1}\widehat{i+2} \in M^{i-1}_p \cup M^{i-1}_{\emptyset}\cup (M_{F}^{i-1}\cap L^{i-1}_p)$. Due to our earlier observation, $\widehat{1}\widehat{i+1}\widehat{i+2}$, the lexicographic maximal $i$-subset of $\widehat{i+1}\widehat{i+2}$, is in $M^{i}$. Thus $\widehat{i+1}\widehat{i+2} \in M^{i-1}_F \cap L^{i-1}_p$ as desired. The remainder of the inductive proof proceeds exactly as it did in Lemma \ref{DGH:lem:nobadLpMF}.
	
	In fact, the remainder of this proof is identical to the end of the proof in Lemma \ref{DGH:lem:nobadLpMF}. The same contradictions are reached when we set $l = i-2$. 
\end{proof}

\begin{lemma}\label{DGH:lem:Mireal}
	The set $M^i$ is a realizable $i$-set for $i \geq 2$.
\end{lemma}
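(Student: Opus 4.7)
The proof proceeds by induction on $i$, with base case $i=2$ immediate since $M^2 = J$ is realizable by hypothesis. For the inductive step, I assume that both $M^{i-1}$ and $L^{i-1} = (M^{i-2})_s$ are realizable (the latter follows from Lemma \ref{DGH:lem:Jreal}) and verify the convexity criterion from Lemma \ref{DGH:lem:Zreal}: for every $X \in C(n, i+1)$ and every triple $\{j_1 < j_2 < j_3\} \subseteq X$, the set $M^i \cap \{\widehat{j_3}, \widehat{j_2}, \widehat{j_1}\}$ must be neither $\{\widehat{j_3}, \widehat{j_1}\}$ nor $\{\widehat{j_2}\}$.

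The strategy is to classify each $\widehat{j_k}$ by its pair of types $(M^{i-1}\text{-type}, L^{i-1}\text{-type})$. Directly from the definition, $\widehat{j_k} \in M^i$ iff its $M^{i-1}$-type is $s$ or the pair equals $(F, \emptyset)$. I would then apply three ingredients: the inclusions $L^{i-1}_s \subseteq M^{i-1}_s$, $L^{i-1}_F \subseteq M^{i-1}_F$, and $L^{i-1}_p \subseteq M^{i-1}_p \cup M^{i-1}_F$ coming from Lemma \ref{DGH:lem:ABpart}; the vanishing $L^{i-1}_s \cap M^{i-1}_F = \emptyset$ from Lemma \ref{DGH:lem:J1SJ2F}; and the four allowed segmentations of $P_X$ under $M^{i-1}$ and under $L^{i-1}$ supplied by Lemma \ref{DGH:lem:parts}. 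In the configuration where $\widehat{j_3}, \widehat{j_1} \in M^i$ and $\widehat{j_2} \notin M^i$, contiguity of the $s$-, $F$-, and $\emptyset$-regions of $P_X$ (under both classifications) forces $\widehat{j_2}$ to share the relevant types with the endpoints and therefore lie in $M^i$, yielding a contradiction in each endpoint-type pairing. In the reverse configuration, where $\widehat{j_2} \in M^i$ and $\widehat{j_3}, \widehat{j_1} \notin M^i$, if $\widehat{j_2} \in M^{i-1}_s$ then contiguity of the $s$-region forces an endpoint into $M^i$; otherwise $\widehat{j_2} \in M^{i-1}_F \cap L^{i-1}_\emptyset$, and a subcase analysis on the endpoint types isolates the arrangement with combined segmentation $\{P_X \cap (L^{i-1}_p \cap M^{i-1}_F) < P_X \cap (L^{i-1}_\emptyset \cap M^{i-1}_F) < P_X \cap (L^{i-1}_\emptyset \cap M^{i-1}_p)\}$, precisely the segmentation forbidden by Lemma \ref{DGH:lem:nobadLpMF} applied at index $i-1$.

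The main obstacle is the bookkeeping in the reverse configuration, specifically eliminating the ``mirror'' arrangement $\{P_X \cap (L^{i-1}_\emptyset \cap M^{i-1}_p) < P_X \cap (L^{i-1}_\emptyset \cap M^{i-1}_F) < P_X \cap (L^{i-1}_p \cap M^{i-1}_F)\}$, which is not literally stated as forbidden. The plan is to intersect the $M^{i-1}$-segmentations (a) and (b) with the $L^{i-1}$-segmentations (c') and (d'), and use the constraints that the $L^{i-1}_\emptyset$-region of $P_X$ is contiguous and that $L^{i-1}_s$-elements can only sit inside $M^{i-1}_s$-regions; these force a would-be rightmost $L^{i-1}_p \cap M^{i-1}_F$ element to conflict either with the placement of a rightmost $M^{i-1}_s$-element or with the prescribed $L^{i-1}$-ordering, collapsing the mirror case to a contradiction. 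The hypothesis that $J$ is a realizable $2$-set is essential throughout, since it underlies both the inductive proof of Lemma \ref{DGH:lem:nobadLpMF} and the recursive definitions of $L^i$ and $M^i$ that keep each level realizable.
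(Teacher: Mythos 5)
Your proposal follows the paper's proof essentially step by step: induction on $i$, verifying the convexity criterion of Lemma \ref{DGH:lem:Zreal}, classifying each $\widehat{j_k}$ by its $(M^{i-1}$-type$, L^{i-1}$-type$)$ pair, and invoking the same battery of tools --- Lemma \ref{DGH:lem:ABpart}, Lemma \ref{DGH:lem:JSF}, Lemma \ref{DGH:lem:J1SJ2F}, Lemma \ref{DGH:lem:parts}, Corollary \ref{DGH:cor:badsets}, and Lemma \ref{DGH:lem:nobadLpMF} (applied at index $i-1$). The case split and the single genuinely difficult configuration you identify are the same as in the paper.

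The one place where the paper is crisper than your sketch is exactly the ``mirror'' arrangement that you flag as the main obstacle. You propose an indirect argument by ``intersecting segmentations,'' which would work but is more elaborate than needed. The clean observation is this: in the mirror arrangement
\[
\{P_X \cap (L^{i-1}_\emptyset \cap M^{i-1}_p) \ < \ P_X \cap (L^{i-1}_\emptyset \cap M^{i-1}_F) \ < \ P_X \cap (L^{i-1}_p \cap M^{i-1}_F)\},
\]
any would-be $M^{i-1}_s$ segment would have to sit lexicographically above the displayed block, hence inside the $L^{i-1}_p$ segment of the forced $L^{i-1}$-segmentation $\{L^{i-1}_s < L^{i-1}_\emptyset < L^{i-1}_p\}$; but $M^{i-1}_s \cap L^{i-1}_p = \emptyset$ by Lemma \ref{DGH:lem:ABpart}, so the $M^{i-1}_s$ segment is empty. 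The pure $M^{i-1}$-segmentation of $P_X$ is then $\{M^{i-1}_p < M^{i-1}_F < \emptyset\}$, which Corollary \ref{DGH:cor:badsets} forbids outright. This is precisely what the paper does. So no new lemma is needed; the mirror case is killed by Corollary \ref{DGH:cor:badsets}, and only the non-mirror case requires the heavier Lemma \ref{DGH:lem:nobadLpMF}. With that substitution your outline matches the paper's argument.
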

\begin{proof}
	As in the proof that $L^i$ is realizable, we proceed via induction. The base case is when $i = 2$. This is true by assumption as $M^2 \coloneqq J$. We now prove the statement for general $i$. 
	
	Once again, our goal is to prove that $M^i$ and its complement are convex. According to Lemma \ref{DGH:lem:Zreal}, this is equivalent to realizability. Let $X$ be an $(i+1)$-set and $P_X$ the $i$-packet generated by $X$. Take $A, B, C \in P_X$ with $A <_{\lex} B <_{\lex} C$. There are two general cases to tackle:
	\begin{align}
		A, C \in M^i \tag{Case 1}\\
		B \in M^i \tag{Case 2}
	\end{align} 
	
	\textbf{Case 1}: Suppose that $A, C \in M^i$. We need to show that $B \in M^i$ as well. By definition, we are assuming that 
		\begin{equation}\label{Mireal:eqn1}
			A, C \in M^{i-1}_{s} \cup (M^{i-1} \setminus L^{i-1})_F 
		\end{equation}
		
	There are several subcases to consider depending upon precisely which component of $M^{i-1}$ $A$ and $C$ live in.
	\begin{align}
		A, C &\in M^{i-1}_s \tag{1.i} \\
		A \in M^{i-1}_s, C &\in (M^{i-1}\setminus L^{i-1})_F \text{ (or vice-a-versa)}\tag{1.ii} \\
		A, C &\in (M^{i-1}\setminus L^{i-1})_F\tag{1.iii}
	\end{align}
		
	\textbf{Case 1.i)} By definition, $L^i \coloneqq M^{i-1}_s$. Because $B$ is between $A$ and $C$ in the lexicographic order, the realizability of $L^i$ demands that $B \in L^i$ as well. Hence $B \in M^i$.
			
	\textbf{Case 1.ii)} As $M^i \subseteq M^{i-1}_s \cup M^{i-1}_F$, we are assuming that $A, C \in M^{i-1}_s \cup M^{i-1}_F$. Our inductive hypothesis implies that $M^{i-1}_s \cup M^{i-1}_F$ is a realizable $i$-set. Convexity requires that $B \in M^{i-1}_s \cup M^{i-1}_F$. By Lemma \ref{DGH:lem:ABpart}, one of the following is true:
			\begin{align}
				B &\in M_s^{i-1} \label{Mireal:C1:good1} \\
				B &\in M^{i-1}_F \cap L^{i-1}_F \label{Mireal:C1:bad1} \\
				B &\in M_F^{i-1} \cap L^{i-1}_s \label{Mireal:C1:bad2} \\
				B &\in M_{F}^{i-1} \cap L^{i-1}_{\emptyset} \label{Mireal:C1:good2} \\ 
				B &\in M_F^{i-1} \cap L^{i-1}_p \label{Mireal:C1:bad3}
			\end{align}

	If either Equation \eqref{Mireal:C1:good1} or Equation \eqref{Mireal:C1:good2} is true, then convexity holds. Fortunately, our previous work rules out Equations \eqref{Mireal:C1:bad1}, \eqref{Mireal:C1:bad2}, \eqref{Mireal:C1:bad3} from being true. Equation \eqref{Mireal:C1:bad1} is not possible because $L^{i-1}_F = \emptyset$ by Lemma \ref{DGH:lem:JSF}. Equation \eqref{Mireal:C1:bad2} is impossible because $M^{i-1}_F \cap L^{i-1}_s = \emptyset$ by Lemma \ref{DGH:lem:J1SJ2F}. If Equation \eqref{Mireal:C1:bad3} held, then $B \in L^{i-1}_p$ and $C \in L^{i-1}_{\emptyset}$. The realizability of $L^{i-1}_p$ would dictate that $A \in L^{i-1}_p$, which is impossible because $M^{i-1}_s \cap L^{i-1}_p = \emptyset$. Hence either Equation \eqref{Mireal:C1:good1} or \eqref{Mireal:C1:good2} must hold, placing $B \in M^i$.
			
	\textbf{Case 1.iii)} In this case, both $A, C \in M^{i-1}_F \cap L^{i-1}_{\emptyset}$. Lemma \ref{DGH:lem:parts} implies that $B \in M^{i-1}_F \cap L^{i-1}_{\emptyset}$ as well. Thus $B \in M^i$. 			
	
	\textbf{Case 2:} Now suppose that $B \in M^i$. We need to show that either $A \in M^i$ or $C \in M^i$ (or both). There are two subcases:
	\begin{align}
		B \in M^{i-1}_s \tag{2.i}\\
		B \in (M^{i-1}\setminus L^{i-1})_F \tag{2.ii}
	\end{align}
	\textbf{Case 2.i)} The realizability of $M^{i-1}_s$ demands that either $A \in M^{i-1}_s$ or $C \in M^{i-1}_s$. 
			
	\textbf{Case 2.ii)} The realizability of $M^{i-1}_s \cup M^{i-1}_F$ requires that either $A \in M^{i-1}_s \cup M^{i-1}_F$ or $C \in M^{i-1}_s \cup M^{i-1}_F$. We handle each case separately.
			
	Suppose that $A \in M^{i-1}_s \cup M^{i-1}_F$. Our goal is to show that either $A \in M^i$ or $C \in M^i$. Consider the statements analogous to Equations \eqref{Mireal:C1:good1}-\eqref{Mireal:C1:bad3} but for $A$ instead of $B$. Again, if either analogs to Equations \eqref{Mireal:C1:good1} or \eqref{Mireal:C1:good2} are true, then convexity holds as $A \in M^i$. We may rule out the analogs of Equations \eqref{Mireal:C1:bad1} and \eqref{Mireal:C1:bad2} as before. We are left to rule out the analog of equation \eqref{Mireal:C1:bad3}. 
	
	Suppose that $A \in M^{i-1}_F \cap L_{p}^{i-1}$. In order for this to imply that $M^i$ is non-convex, we would need $C \notin M^i$ as well. We claim that it is not simultaneously possible for $A \in M^{i-1}_F \cap L^{i-1}_p$ and $C \notin M^i$. 
			
	So far we have assumed that $P_X$ has the following (sub)segmentations:
		\begin{center}
			\begin{tabular}{ccccccccc}
				$\lbrace\ldots$ & $<$ & $A$ & $<$ & $B$ & $<$ & $C$ & $<$ & $\ldots\rbrace$ \\
				$\lbrace\ldots$ & $<$ & $M_{F}^{i-1}$ & $<$ & $M_F^{i-1}$ & $<$ & ?? & $<$ & $\ldots\rbrace$ \\
				$\lbrace\ldots$ & $<$ & $L^{i-1}_{p}$ & $<$ & $L^{i-1}_{\emptyset}$ & $<$ & ?? & $<$ & $\ldots\rbrace$
			\end{tabular}
		\end{center}
	By the assumed realizability of $M^{i-1}$, we know that
	\[C \in M^{i-1}_F \cup M^{i-1}_{\emptyset} \cup M^{i-1}_s \cup M^{i-1}_p\]
	Because $B \in M^{i-1}_F$, Lemma \ref{DGH:lem:Jreal} states that $C \notin M^{i-1}_{\emptyset}$. Thus if $C \notin M^i$, one of the following is true
		\begin{align}
				C &\in M^{i-1}_F \setminus (M^i \cap M_F^{i-1}), \label{Mireal:C2:bad1},\\
				C &\in M^{i-1}_p \label{Mireal:C2:bad2}.
		\end{align}	
		
	If Equation \eqref{Mireal:C2:bad1} holds, then applying Lemma \ref{DGH:lem:ABpart} with $A = L^{i-1}$ and $B = M^{i-1}$, along with Lemma \ref{DGH:lem:JSF} applied to $L^{i-1}$ and Lemma \ref{DGH:lem:J1SJ2F}, imply that $C \in M^{i-1}_F \cap L_p^{i-1}$. However, $A, C \in L^{i-1}_p$ and $B \in L^{i-1}_{\emptyset}$ violates the realizability of $L^{i-1}_p$.  
			
	If Equation \eqref{Mireal:C2:bad2} holds then Lemma \ref{DGH:lem:parts} implies that $C \in M^{i-1}_p \cap L^{i-1}_{\emptyset}$. Lemma \ref{DGH:lem:ABpart} along with the realizability of $M^{i-1}_p$ and $L^{i-1}_p$ implies that $P_X$ must have the following segmentation:
		\begin{equation*}
			\lbrace M_{F}^{i-1} \cap L^{i-1}_{p} < M^{i-1}_{F} \cap L^{i-1}_{\emptyset} < M^{i-1}_{p}\cap L^{i-1}_{\emptyset}\rbrace.
		\end{equation*}
	Such a segmentation is forbidden by Lemma \ref{DGH:lem:nobadLpMF}. As both equations \eqref{Mireal:C2:bad1} and \eqref{Mireal:C2:bad2} lead to contradictions, it is not possible to simultaneously have $A \in M^{i-1}_F \cup L^{i-1}_p$ and $C \notin M^i$. We conclude that either $A \in M^i$ or $C \in M^i$ whenever $A \in M^{i-1}_s \cup M^{i-1}_F$. So convexity holds. 
			
	We now assume that $C \in M^{i-1}_s \cup M^{i-1}_F$. Our goal is the same; to show that either $C \in M^{i}$ or $A \in M^i$. As before the only real cause for concern is the analog of equation \eqref{Mireal:C1:bad3}; that $C \in M^{i-1}_F \cap L^{i-1}_p$. If the analog of equation \eqref{Mireal:C1:bad3} holds, then we have the following (sub)segmentations:
		\begin{center}
		 	\begin{tabular}{ccccccccc}
		 		$\lbrace\ldots$ & $<$ & $A$ & $<$ & $B$ & $<$ & $C$ & $<$ & $\ldots\rbrace$ \\
		 		$\lbrace\ldots$ & $<$ & ?? & $<$ & $M_F^{i-1}$ & $<$ & $M_{F}^{i-1}$ & $<$ & $\ldots\rbrace$ \\
		 		$\lbrace\ldots$ & $<$ & ?? & $<$ & $L^{i-1}_{\emptyset}$ & $<$ & $L^{i-1}_{p}$ & $<$ & $\ldots\rbrace$
		 	\end{tabular}
		\end{center}
		
	Again, we wish to show that $A \in M^i$. Otherwise, one of the analogs of equations \eqref{Mireal:C2:bad1} or \eqref{Mireal:C2:bad2} must hold. As before, the realizability of $L^{i-1}_p$ prohibits the analog of equation \eqref{Mireal:C2:bad1} from being true. The analog of equation \eqref{Mireal:C2:bad2} also cannot be true. If it were, then the realizability of $M^{i-1}_p$ and $L^{i-1}_p$ would imply that $P_X$ have the following invalid segmentation (Corollary \ref{DGH:cor:badsets}):
		\begin{equation*}
				\lbrace M^{i-1}_p < M^{i-1}_{F}\rbrace.
		\end{equation*}
	It follows that neither the analog of equation \eqref{Mireal:C2:bad1} nor \eqref{Mireal:C2:bad2} are possible when $B \in (M^{i-1}\setminus L^{i-1})_F$ and $C \in M_s^{i-1}\cup M_F^{i-1}$. Hence, under those conditions, $C \in M^i$ and $M^i$ is convex. 
	
	This completes the proof that $M^i$ and its complement are convex. We have shown that whenever $A, C \in M^i$, then $B \in M^i$ as well. Furthermore, we have shown that whenever $B \in M^i$ either $A \in M^i$ or $C \in M^i$. 
\end{proof}

Now that we have shown that $M^i$ is realizable, we can proceed with proving that it is the inversion set of the unique maximal element in $\HB{i-1}{J}$. One aspect of this proof requires showing that $M^i$ is greater in the single step inclusion order than any other realizable $i$-set $K$ for which $L^i \subseteq K \subseteq M^i$. Similar to how we proved that $L^i$ was the inversion set of the unique minimal element, we will prove maximality by finding an admissible $M^i$ order that has $K$ has a prefix. Rather than explicitly writing down the order, it is easier to identify the order via its inversion set. Lemma \ref{DGH:lem:SSrel} tells us $K_s \cup M^i_s$ must be in the inversion set of any admissible $M^i$ order with $K$ as a prefix. If we can prove that $K_s\cup M_s^i$ is realizable, then we will be well on our way to proving the maximality of $M^i$.

\begin{lemma}\label{DGH:lem:KsMsreal}
	Let $i \geq 2$ and let $K$ be a realizable $i$-set for which $L^i \subseteq K \subseteq M^i$. Then $K_s \cup M^i_s$ is a realizable $(i+1)$-set.
\end{lemma}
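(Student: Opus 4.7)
The plan is to check realizability directly via the convexity form of Lemma \ref{DGH:lem:Zreal}: fix any $(i+2)$-set $X$ and show that $(K_s \cup M^i_s) \cap P_X$ is a prefix or suffix of the $(i+1)$-packet $P_X$. The chief tools are Lemma \ref{DGH:lem:parts} (applied separately to the realizable $i$-sets $K$ and $M^i$ to segment $P_X$), Lemma \ref{DGH:lem:ABpart} (which records the compatibility of the two segmentations under $K \subseteq M^i$), and --- crucially --- Lemma \ref{DGH:lem:nobadK0MF}.

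First I would record the containment $K_s \subseteq M^i_s \cup M^i_F$: if $Y \in K_s$, then $P_Y \cap K$ is a proper suffix contained in $P_Y \cap M^i$, which forces $P_Y \cap M^i$ to be a suffix of $P_Y$ or all of it by realizability of $M^i$. Combined with $K_F \subseteq M^i_F$ from Lemma \ref{DGH:lem:ABpart}, this already kills the easy configurations: when $M^i_F \cap P_X = \emptyset$ (cases (c), (d) of Lemma \ref{DGH:lem:parts} for $M^i$), we have $K_s \cap P_X \subseteq M^i_s \cap P_X$, so the union equals $M^i_s \cap P_X$, a prefix or suffix of $P_X$; and whenever the $M^i$-segmentation and $K$-segmentation both place the $s$-piece on the same end of $P_X$, their union is trivially a prefix or suffix of $P_X$.

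The only non-trivial case is when $M^i_F \cap P_X \neq \emptyset$ and $K_s \cap P_X$ sits at the end of $P_X$ opposite to $M^i_s \cap P_X$. Without loss of generality the $M^i$-segmentation of $P_X$ is $M^i_s < M^i_F < M^i_p$ (case (a)) and the $K$-segmentation is $K_p < K_\emptyset < K_s$ (case (d$'$)); the case (b$'$) variant with $K_F$ in the middle collapses to a trivial prefix or suffix once one observes, via Lemma \ref{DGH:lem:ABpart} and $K_F \subseteq M^i_F$, that either $M^i_s \cap P_X$ or $K_F \cap P_X$ is empty. In the remaining (a)/(d$'$) configuration, the top element of $P_X$ lies in both $K_s$ and $M^i_p \cup M^i_F$, so $M^i_p \cap P_X = \emptyset$; and the bottom element lies in $M^i_s \subseteq K_s \cup K_\emptyset$, forcing $K_p \cap P_X = \emptyset$ (assuming $M^i_s \cap P_X$ is nonempty, otherwise the union is already a suffix). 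What remains is the joint segmentation
\[\{K_\emptyset \cap M^i_s < K_\emptyset \cap M^i_F < K_s \cap M^i_F\},\]
and this is precisely the $(i+1)$-packet segmentation ruled out by Lemma \ref{DGH:lem:nobadK0MF}. Hence no such $P_X$ exists, and $(K_s \cup M^i_s) \cap P_X$ is a prefix of $P_X$. Case (b) of Lemma \ref{DGH:lem:parts} for $M^i$ is handled by the dual argument, yielding a suffix.

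The main obstacle is the combinatorial bookkeeping of the $4 \times 4$ joint segmentations of $P_X$ --- most are incompatible with $K \subseteq M^i$ or produce an obvious prefix/suffix, but the argument must be run carefully enough to isolate the unique dangerous configuration. Lemma \ref{DGH:lem:nobadK0MF} was evidently designed to forbid exactly this obstruction, so once the case analysis is in place the proof closes immediately.
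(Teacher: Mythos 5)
Your proposal takes essentially the same route as the paper: prove the prefix/suffix condition on each $(i+1)$-packet directly, organize the case analysis around the joint segmentation of $P_X$ under $K$ and $M^i$ via Lemma~\ref{DGH:lem:parts} and Lemma~\ref{DGH:lem:ABpart}, and isolate the one truly dangerous joint segmentation $\{K_\emptyset\cap M^i_s < K_\emptyset\cap M^i_F < K_s\cap M^i_F\}$, which Lemma~\ref{DGH:lem:nobadK0MF} forbids. The paper phrases the case analysis as a $3\times 3$ table indexed by the possible locations of the endpoints $A,C \in K_s \cup M^i_s = (K_s\cap M^i_s)\cup(K_s\cap M^i_F)\cup(K_\emptyset\cap M^i_s)$ rather than by which of the four segmentation shapes $K$ and $M^i$ take on $P_X$, but that is a bookkeeping choice, not a conceptual one; the same lemmas are invoked and the same obstruction is identified. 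One small imprecision in your writeup: the mirror configuration (your ``case (b) of Lemma~\ref{DGH:lem:parts} for $M^i$,'' which corresponds to the paper's table entry $(1,2)$) is not killed by a dual version of Lemma~\ref{DGH:lem:nobadK0MF} --- no such dual is stated or proved. It falls more cheaply: once you force $M^i_p\cap P_X = \emptyset$ via $K_s \subseteq M^i_s\cup M^i_F$, the $M^i$-segmentation becomes $\{M^i_F < M^i_s\}$ with $M^i_p$ empty, which is already one of the forbidden shapes in Corollary~\ref{DGH:cor:badsets}. So the two directions are genuinely asymmetric, one heavy and one light; flagging them both as ``dual'' papers over that distinction, though it does not create a gap in the argument.
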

\begin{proof}
	Instead of proving realizability we will prove convexity. Let $P_X$ be an $i$-packet containing $A \leq_{\lex} B \leq_{\lex} C$. We need to prove that neither of the following occurs
	\begin{align}
		A, C \in K_s \cup M^i_s &\text{ but } B \notin  K_s \cup M^i_s \label{KsMsreal:c1} \tag{*}\\
		A, C \notin K_s \cup M^i_s &\text{ but } B \in K_s \cup M^i_s \label{KsMsreal:c2}\tag{**}
	\end{align}
	
	First we show that \eqref{KsMsreal:c1} is impossible. Assume that $A, C \in K_s \cup M^i_s$. By Lemma \ref{DGH:lem:ABpart}, we know that 
	\begin{equation}\label{KsMsreal:KsMs}
		K_s \cup M_s^i = (K_s \cap M^i_s) \cup (K_s \cap M^i_F) \cup (K_{\emptyset} \cup M^i_s)
	\end{equation}
	Equation \eqref{KsMsreal:KsMs} implies that there are nine distinct ways for our assumption to hold. Each possibility is listed in Table \ref{MsKsreal:C1:Segs}.
	\begin{center}
		\begin{table}[h]
		\begin{tabular}{c|ccc|ccc|ccc}
			& A & B & C & A & B & C & A & B & C \\
		  \hline K & $K_s$ & ?  & $K_s$ & $K_s$ & ? & $K_s$ & $K_s$ & ? & $K_{\emptyset}$ \\
		  M & $M^i_s$ & ? & $M^i_s$ & $M^i_s$ & ? & $M^i_F$ & $M^i_s$ & ? & $M^i_s$ \\
		  \hline K  & $K_s$ & ?  & $K_s$ & $K_s$ & ? & $K_s$ & $K_s$ & ? & $K_{\emptyset}$ \\
		  M & $M^i_F$ & ? & $M^i_s$ & $M^i_F$ & ? & $M^i_F$ & $M^i_F$ & ? & $M^i_s$ \\
		  \hline K & $K_{\emptyset}$ & ? & $K_s$ & $K_{\emptyset}$ & ? & $K_s$ & $K_{\emptyset}$ & ? & $K_{\emptyset}$ \\
		  M & $M^i_s$ & ? & $M^i_s$ & $M^i_s$ & ? & $M^i_F$ & $M^i_s$ & ? & $M^i_s$ 
		\end{tabular}
		\caption{\label{MsKsreal:C1:Segs} Possible ways to have $A, C \in K_s \cup M^i_s$.}
		\end{table}
	\end{center}
	Beginning with an index of 0, the $(0,0), (0,1), (0,2), (1,0), (1,1), (2,0), \text{and } (2,2)$ entries of Table \ref{MsKsreal:C1:Segs} are all require $B \in K_s \cup M^i_s$ in order to maintain the realizability of $K_s$ or $M^i_s$. 
	
	It is impossible for entry $(1,2)$ to occur. If it did then Lemma $\ref{DGH:lem:parts}$ implies that $P_X$ is segmented as 
	\[P_X = \lbrace M^i_F < M^i_s\rbrace\]
	However, Corollary \ref{DGH:cor:badsets} forbids such a packet segmentation.
	
	Entry $(2,1)$ is the only one which we have not addressed. Lemma \ref{DGH:lem:parts} implies that 
	\begin{align*}
		B &\in K_{\emptyset} \cup K_s \\
		B &\in M^i_s \cup M^i_F
	\end{align*}
	We only run into issues if $B \in K_{\emptyset} \cap M^i_F$. But if this is true, then $P_X$ is segmented as:
	\[P_X = \lbrace K_{\emptyset} \cap M^i_s < K_{\emptyset} \cap M^i_F < K_s \cap M^i_F\rbrace \]
	Such a segmentation is forbidden by Lemma \ref{DGH:lem:nobadK0MF}. So whenever entry $(2,1)$ is true, $B \in K_s \cup M^i_s$. Since all valid entries in Table \ref{MsKsreal:C1:Segs} imply that $B \in K_s \cup M^i_s$, it follows that \eqref{KsMsreal:c1} is impossible.
	
	We now rule out \eqref{KsMsreal:c2}. Assume that $B \in K_s \cup M^i_s$. If $B \in K_s$, then the realizability of $K_s$ requires that either $A \in K_s$ or $C \in K_s$. Likewise if $B \in M^i_s$, either $A \in M^i_s$ or $C \in M^i_s$. Therefore one or both of $A \in K_s \cup M^i_s$ and $C \in K_s \cup M^i_s$ is true. Hence \eqref{KsMsreal:c2} is impossible. As both \eqref{KsMsreal:c1} and \eqref{KsMsreal:c2} are impossible, $K_s \cup M_s$ is realizable. 
\end{proof}

\begin{lemma}\label{DGH:lem:unimax}
	The element $r \in B_{i-1}(J)$ with inversion set $M^i$ is the unique maximal element of $\HB{i-1}{J}$.
\end{lemma}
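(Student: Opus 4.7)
The plan is to establish three facts: (a) the element $r_*$ of $\epaths{\emptyset}{M^{i-1}}{i-2}$ with $\Inv(r_*) = M^i$ lies in $\HB{i-1}{J}$, (b) every $r' \in \HB{i-1}{J}$ satisfies $\Inv(r') \subseteq M^i$, and (c) for any such $r'$, $\Inv(r') \leq_{\SSO} M^i$ as realizable $i$-sets. Under the isomorphism of Theorem \ref{DGH:thm:PathZEquiv} applied to $M^{i-1}$ in place of $J$, (a)+(b)+(c) identify $r_*$ as the greatest, and hence the unique maximal, element of $\HB{i-1}{J}$.

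For (a), Lemma \ref{DGH:lem:Mireal} gives that $M^i$ is realizable, and by construction $L^i \subseteq M^i \subseteq (M^{i-1})_s \cup (M^{i-1})_F$, so Theorem \ref{DGH:thm:PathZEquiv} produces a unique $r_* \in \epaths{\emptyset}{M^{i-1}}{i-2}$ with $\Inv(r_*) = M^i$. To verify that $r_* \in \HB{i-1}{J}$, I would lift a representative via Lemma \ref{DGH:lem:ExtLem} to an admissible $(i-1)$-order $\tilde{\rho} \in A(n,i-1)$ with $\Inv(\tilde{\rho}) = M^i$ and $M^{i-1}$ as a prefix, then apply Lemma \ref{DGH:lem:SSrel} to the pair $L^{i-1} \subset M^{i-1}$ to produce a representative which additionally has $L^{i-1}$ as a prefix ordered before $M^{i-1}\setminus L^{i-1}$. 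The hypotheses $((L^{i-1})_s \cup (M^{i-1})_s) \subseteq M^i$ and $((L^{i-1})_p \cup (M^{i-1})_p) \cap M^i = \emptyset$ follow by intersecting with Lemma \ref{DGH:lem:ABpart}, after killing $(L^{i-1})_F$ via Lemma \ref{DGH:lem:JSF} and $(L^{i-1})_s \cap (M^{i-1})_F$ via Lemma \ref{DGH:lem:J1SJ2F}.

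For (b), let $\rho$ be a representative of $r' \in \HB{i-1}{J}$ that orders $L^{i-1}$ before $M^{i-1}\setminus L^{i-1}$, and consider any $X \in M^{i-1}_F \cap \Inv(\rho)$. Since $P_X \subseteq M^{i-1}$, exactly one of $X \in (L^{i-1})_s,\, (L^{i-1})_F,\, (L^{i-1})_p,\, (L^{i-1})_{\emptyset}$ must hold. Lemma \ref{DGH:lem:J1SJ2F} rules out the first, Lemma \ref{DGH:lem:JSF} the second, and in the third case $P_X \cap L^{i-1}$ is a nonempty lex-prefix of $P_X$ which $\rho$ must order first, forcing $\rho\vert_{P_X}$ to be lex so $X \notin \Inv(\rho)$ --- contradiction. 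Thus $\Inv(r') \cap M^{i-1}_F \subseteq (L^{i-1})_{\emptyset} \cap M^{i-1}_F$, and hence $\Inv(r') \subseteq L^i \cup ((L^{i-1})_{\emptyset} \cap M^{i-1}_F) = M^i$.

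For (c), set $K \coloneqq \Inv(r') \subseteq M^i$. Lemma \ref{DGH:lem:KsMsreal} yields that $K_s \cup M^i_s$ is a realizable $(i+1)$-set, so Theorem \ref{DGH:thm:Zieg} furnishes $\rho \in A(n,i)$ with $\Inv(\rho) = K_s \cup M^i_s$. Because $K \subseteq M^i$, no packet can have its intersection with $K$ a proper lex-prefix while its intersection with $M^i$ is a proper lex-suffix (or symmetrically), so $(K_p \cup M^i_p) \cap (K_s \cup M^i_s) = \emptyset$ and $\rho$ witnesses condition (3) of Lemma \ref{DGH:lem:SSrel}. Therefore $K \leq_{\SSO} M^i$, and since every single-element-addition chain from $K$ to $M^i$ remains inside $[L^i,\, L^i \cup (M^{i-1})_F]$, Theorem \ref{DGH:thm:PathZEquiv} delivers $r' \leq_{\MS} r_*$ in $\epaths{\emptyset}{M^{i-1}}{i-2}$, and hence in the subposet $\HB{i-1}{J}$. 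The main obstacle is step (b), where one must trace how the ``$L^{i-1}$ first'' condition propagates to restrict inversion sets; the heaviest black box is Lemma \ref{DGH:lem:KsMsreal} via Lemma \ref{DGH:lem:nobadK0MF}, which is precisely where the realizable $2$-set hypothesis on $J$ is used.
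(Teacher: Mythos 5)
Your proof is correct and follows essentially the same route as the paper: existence via Theorem \ref{DGH:thm:PathZEquiv} plus the prefix criterion of Lemma \ref{DGH:lem:SSrel}, and maximality via Lemma \ref{DGH:lem:KsMsreal} feeding into Lemma \ref{DGH:lem:SSrel}(3). The one genuine addition is your step (b), which explicitly verifies $\Inv(r') \subseteq M^i$ for every $r' \in \HB{i-1}{J}$ (using Lemmas \ref{DGH:lem:JSF} and \ref{DGH:lem:J1SJ2F} together with the ``$L^{i-1}$ ordered first'' condition to rule out $L^{i-1}_s$, $L^{i-1}_F$, and $L^{i-1}_p$); the paper's proof quietly takes this containment for granted when it passes to ``any realizable $i$-set $L^i \subseteq K \subseteq M^i$,'' so spelling it out is worthwhile.
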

\begin{proof}
	To complete this proof, we must show that there exists $r \in \HB{i-1}{J}$ with $\Inv(r) = M^i$ and that such an $r$ is the unique maximal element in $\HB{i-1}{J}$. 
	
	We begin with existence. To prove existence we will find an admissible $M^{i-1}$ order $\rho$ for which $L^{i-1}$ is a prefix and for which $\Inv(\rho) = M^i$. Then $r \coloneqq \pi(\rho)$.
	
	Lemma \ref{DGH:lem:Mireal} asserts that $M^i$ is a realizable $i$-set. By definition, $M^{i-1}_s \subseteq M^i \subseteq M^{i-1}_s \cup M^{i-1}_F$. Therefore Theorem \ref{DGH:thm:PathZEquiv} asserts that there is a $\tilde{r} \in \epaths{\emptyset}{M^{i-1}}{i-2}$ for which $\Inv(r) = M^i$. Pick $\rho \in \tilde{r}$. By definition, $\rho \in A_{M^{i-1}}(n,k)$. 
	
	We claim that, up to a sequence of elementary equivalences, $L^{i-1}$ is a prefix of $\rho$. As we have seen before, this is because $L^{i-1}_s \subseteq M^i$ and $L^{i-1}_p \cap M^i = \emptyset$ (Lemma \ref{DGH:lem:Zequiv}). Without loss of generality, assume that $\rho$ has $L^{i-1}$ as a prefix. Then $r \coloneqq \pi(\rho) \in \HB{i-1}{J}$ and $\Inv(r) = M^i$ as desired.
	
	We now show that $r$ is the unique maximal element in $\HB{i-1}{J}$. Take any other $r' \in \HB{i-1}{J}$. We need to show that $r' \leq_{\MS} r$. Theorem \ref{DGH:thm:PathZEquiv} instead allows us to prove that $\Inv(r') \leq_{\SSO} \Inv(r)$, or that $K \leq_{\SSO} M^i$ for any realizable $i$-set $L^i \subseteq K \subseteq M^i$. This, of course, is equivalent to finding an admissible $M^i$ order for which $K$ is a prefix. As we saw in the proof of Theorem \ref{DGH:thm:PathSnk} this is equivalent to proving that $K_s \cup M^{i}_s$ is a realizable $(i+1)$-set, which is precisely the result of Lemma \ref{DGH:lem:KsMsreal}.
\end{proof}

This completes the proof of Theorem \ref{DGH:thm:DGHBruhat}. Lemmas \ref{DGH:lem:unimin} and \ref{DGH:lem:unimax} establish that $\HB{i-1}{J}$ have unique minimal and maximal elements whose inversion sets, respectively, are $L^i$ and $M^i$. 

\section{Issues with a general $k$}

Theorem \ref{DGH:thm:DGHBruhat} was only proved a realizable $2$-set $J$. In particular, the proofs of Lemmas \ref{DGH:lem:nobadLpMF} and \ref{DGH:lem:nobadK0MF} relied on $J$ being a realizable $2$-set.
It is unknown if Theorem \ref{DGH:thm:DGHBruhat} holds for a general realizable $k$-set. The proof certainly cannot. In particular, Lemma \ref{DGH:lem:nobadK0MF} fails for some certain realizable $k$-sets $J$. The following counterexample is due to Ben Elias.

\begin{example}
	The counterexample occurs when $n = 9$ and $k = 5$. Let $J = M^5 = C(9,5)\sdiff{12345}$. Then
	\begin{align}
		L^6 &= \lbrace X \in C(9,6) \mid \{12345\} \subset X\rbrace\\
		\begin{split}
		M^6 &= L^6 \cup \lbrace X \in C(9,6) \mid \{12345\}\nsubseteq X \rbrace \\
		&= C(9,6).
		\end{split}
	\end{align}
	Recall that $L^6$ and $M^6$ are, respectively, the inversion sets of the unique minimal and maximal elements of the second Bruhat order of $J$. Continuing, we get that
	\begin{align}
		L^7 &= \emptyset \\
		\begin{split}
			M^7 &= L^7 \cup (M^6\setminus L^6)_F \\
				&= \{X \in C(9,7) \mid P_X \subseteq M^6\setminus L^6\} \\
				&= \{X \in C(9,7) \mid L^6 \cap P_X = \emptyset\} \\
				&= \{X \in C(9,7) \mid \{12345\}\nsubseteq X\}
		\end{split}
	\end{align}
	It is at this point where our proof strategy breaks down. Consider the realizable $7$-set 
	\[K = \{2356789, 2456789, 3456789\}.\]
	Clearly $L^7 \subset K \subset M^7$. However, $K_s \cup M_s$ is not a realizable $8$-set. In particular, the $8$-packet generated by $\{123456789\}$ is segmented as in \eqref{nobadK0MF:eqn:seg}. To quickly see this note that 
	\begin{align*}
		\{12345789\} <_{\lex} \{1234&6789\} <_{\lex} \{12356789\}\\
		\{12345789\} &\in K_{\emptyset} \cap M^7_s \\
		\{12346789\} &\in K_{\emptyset} \cap M^7_F \\
		\{12356789\} &\in K_s \cap M^7_F.
	\end{align*}
	This does not serve as a counterexample to our general result. There is no realizable $2$-set for which $L^7 = \emptyset$ and $M^7 = \{X \in C(9,7) \mid \{12345\}\nsubseteq X\}$.
\end{example} 
	
This also does not mean that it is impossible to generalize Theorem \ref{DGH:thm:DGHBruhat} to higher $k$'s; only that our approach of proving that $K_s \cup M^i_s$ is realizable fails. In fact, Theorem \ref{DGH:thm:PathSnk} proves that the second higher Bruhat order of a realizable $k$-set does have a unique minimal and maximal element. This holds for any $k$. In order to further extend higher Bruhat orders to arbitrary realizable $k$-sets, it is likely that a more complicated formula than $K_s \cup M^i_s$ will be needed, or perhaps even an entirely different proof strategy.	

\section{Conclusion and Next Steps}

\subsection{Summary of Results}

Theorem \ref{DGH:thm:DGHBruhat} is the main result of this paper. It extends Manin and Schechtman's original result on higher Bruhat orders to higher Bruhat orders defined for non-longest words in $S_n$. This is accomplished using the isomorphism between realizable $2$-sets $J$ and words $w$ in $S_n$ to define the second Bruhat order of a non-longest word as equivalence classes of paths from $\emptyset$ to $J$ in the weak Bruhat order of $S_n$. Subsequent higher Bruhat orders are then defined as equivalence classes of subposets of paths in $B(n,k)$. By defining higher Bruhat orders in this manner, we are able prove an analog of Ziegler isomorphism between realizable $(k+1)$-sets and elements of $B(n,k)$. This allows us to take full advantage of Manin and Schechtman as well as Ziegler's original results in order to prove that non-longest word higher Bruhat orders have unique minimal and maximal elements, as well as satisfy a suitable generalization of the Manin-Schechtman correspondence. 

As discussed, our proofs crucially rely on beginning with a realizable $2$-set. In particular, we prove that $\HB{i-1}{J}$ has a unique maximal element by proving that $K_s \cup M^i_s$ is realizable for all realizable $i$-sets $L^i \subseteq K \subseteq M^i$. This allowed us to prove that $K \leq_{SS} M^i$ by finding an admissible $M^i$ order with $K$ as a prefix. The proof that $K_s \cup M^i_s$ is realizable relied on a contradiction that we only showed occurred when $J$ is a realizable $2$-set. In general, it is not clear if an analogous contradiction occurs when $J$ is not a realizable $2$-set. Instead, we are only able to prove that the second higher Bruhat order of an arbitrary realizable $k$-set has a source and sink.

\subsection{Affine Case}\label{DGH:sec:Affine}

Fully extending these results (or providing a counterexample) is not the only direction for future research. Shelley-Abrahamson and Vijaykumar's results on second and third Bruhat orders to Type B Weyl groups suggest that Manin and Schechtman's results may be extend to other Weyl or Coxeter groups. Affine type A Coxeter groups are a good candidate. 

\begin{definition}\label{DGH:def:CZk}
	For any $k > 1$, let $C(\mathbb{Z},k)$ denote the size $k$ subsets of $\mathbb{Z}$. We identify $C(\mathbb{Z},k)$ with the image of $\mathbb{Z}^{k}$ under the natural action of $S_k$. 
\end{definition}

Fix $N$ and recall the Coxeter presentation of $W^{aff}_{N-1}$.

\begin{equation}\label{DGH:def:affine}
	\begin{split}
	W^{aff}_{N-1} = \langle s_1\ldots s_N \mid &s_is_js_i = s_js_is_j \text{ if } i \equiv j\pm 1\mod N, \\
	&s_is_j = s_js_i \text{ otherwise}, s_i^2 = 1 \rangle
	\end{split}
\end{equation} 

We can embed $W^{aff}_{N-1}$ into the set of permutations of $\mathbb{Z}$ via the following map:
\begin{align*}
	s_{1} &\mapsto \ldots (1,2)(1+N,2+N)\ldots \\
	s_{2} &\mapsto \ldots (2,3)(2+N,3+N)\ldots \\
	&\vdots \\
	s_N &\mapsto \ldots (0,1)(N,N+1)\ldots
\end{align*}

In particular, the image of $W^{aff}_{N-1}$ is a subset of the $N$-periodic permutations of $\mathbb{Z}$. 

\begin{definition}
	Let $w$ be a permutation of $S_n$. If $w(i+N) = w(i)+N$ for all $i \in \mathbb{Z}$, then $w$ is an $N$-periodic permutation of $\mathbb{Z}$. The set of all $N$-periodic permutations of $\mathbb{Z}$ is denoted $\Sigma_N$.
\end{definition}

There is an $N$-periodic permutation $w_{t}$ defined by $w_t(i) = i+1$ which generates the subgroups of translations of $\mathbb{Z}$. This subgroup is isomorphic to $\mathbb{Z}$. There is an isomorphism $\Sigma_N \cong \mathbb{Z} \rtimes W_N^{aff}$. 

The action of $W^{aff}_{N}$ on $\mathbb{Z}$ allows us to define the following set for any $\tilde{w} \in W^{aff}_{N-1}$.
\begin{equation}\label{DGH:def:Inv'}
	\Inv'(\tilde{w}) \coloneqq \lbrace (x,y) \in \mathbb{Z}\times\mathbb{Z} \mid x < y, \tilde{w}(y) < \tilde{w}(x)\rbrace
\end{equation}

Periodicity necessitates that $(x, y) \in \Inv'(\tilde{w})$ whenever $(x+N, y+N) \in \Inv'(\tilde{w})$. Let $N\mathbb{Z}$ be the additive subgroup of $\mathbb{Z}$ generated by $N$. Define the diagonal action of $N\mathbb{Z}$ on $\mathbb{Z}\times\mathbb{Z}$ by
\begin{equation}\label{DGH:eqn:diagact}
	(Nk).(x,y) = (x+Nk, y+Nk) \forall k \in \mathbb{Z}
\end{equation}
As we just observed, $\Inv'(\tilde{w})$ is invariant under the diagonal action of $N\mathbb{Z}$. 

\begin{notation}
	Let $\sim_d$ be the equivalence relation generated by the diagonal action of $N\mathbb{Z}$ in equation \eqref{DGH:eqn:diagact}. Then $\tilde{C}(N,k) \coloneqq C(\mathbb{Z},k)\big/ \sim_d$. An element of $\tilde{C}(N,k)$ is written as $[x_1\ldots x_k]$.
\end{notation}

\begin{definition}\label{DGH:def:AffInv}
	 If $\pi_d: \mathbb{Z}^2 \rightarrow \mathbb{Z}^2\big/ \sim_d$ is the projection map, then we define
	\[
		\Inv(\tilde{w}) = \pi(\Inv'(\tilde{w}))
	\]
	Moreover, under the identification in Definition \ref{DGH:def:CZk}, $\Inv(\tilde{w})$ is a subset of $\tilde{C}(N,2)$.
\end{definition}

As defined, $\Inv(\tilde{w})$ never contains pairs $[x,y]$ where $x \equiv y \mod N$. This is because whenever $x \equiv y \mod N$, then periodicity implies that $\tilde{w}(y) = \tilde{w}(x) + (y-x) > \tilde{w}(x)$. So $\Inv(\tilde{w})$ avoids the $N\mathbb{Z}$-invariant subset $Y_2 \coloneqq \lbrace [x,y] \mid x\equiv y \mod N \rbrace$. While we could work with $\tilde{C}(N,2)\setminus Y_2$, we will see momentarily that it is convenient to at least consider $\tilde{C}(N,2)$ in its entirety.

A new difficulty that arises in the affine setting is determining the lexicographic order on $\tilde{C}(N,2)$. For example, let $N = 3$ and consider $[13]$ and $[34] = [01]$. Consider the three strands with labels $\{1,3,4\}$ and their packet $P_{[134]} = \{[13] < [14] < [34]\}$ ordered lexicographically by their representatives. Within this packet $[13] < [34]$. Now consider the packet $P_{[013]} = \{[01] < [03] < [13]\}$. Within this packet $[34] = [01] < [13]$. This suggests that it is foolish to consider total orders on $\tilde{C}(N,2)$ as there is not total order on $\tilde{C}(N,2)$ that respects the lexicographic order on each $2$-packet. Instead, we must limit ourselves to total orders on finite realizable subsets of $\tilde{C}(N,2)$.

Again consider $P_{[134]}$. The middle element $[14]$ is in $Y_2$. Therefore it never appears in an inversion set. Using precisely the same argument as in the non-affine setting (see the paragraph below \ref{DGH:ex:stutst}), $\Inv(\tilde{w})$ must intersect $P_{[134]}$ as either a prefix or a suffix. The same is true of $P_{[013]}$. Hence no inversion set contains both $[13]$ and $[34]$, and we need not concern ourselves with their ordering. This is true in general if we exclude $Y_2$ from any realizable $2$-set.

If we are trying to pin down a characterization of the inversion sets of periodic permutations (i.e. realizability), then it is clear that $P_{[134]}$ plays a role. However, $P_{[134]}$ will never be full, so it can never be flipped in any higher Bruhat order. This is analogous to how reduced expressions for non-longest words in $S_n$ have associated $J_p$ and $J_s$ sets.

Having illustrated some of the subtleties, we now give the main definitions. 

\begin{definition}\label{DGH:def:diagG}
	For $k \geq 1$, define the \emph{diagonal subgroup of $\mathbb{Z}^k$} or $\Delta_kN\mathbb{Z}$ as the subgroup generated by $(N,\ldots,N)$. Then define $\tilde{C}(N,k) \coloneqq C(\mathbb{Z},k)\big/\Delta_kN\mathbb{Z}$. 
\end{definition}

\begin{definition}\label{DGH:def:Yk}
	Let $Y_k \coloneqq \{[x_1,\ldots,x_k] \mid x_i\equiv x_j \text{ for some } i,j\}$. Then $Y_k \subset \tilde{C}(N,k)$. Observe that $Y_k = \tilde{C}(N,k)$ for $k \geq N$.
\end{definition}

\begin{definition}\label{DGH:def:affPack}
	Given an element $X = [x_1 < \ldots < x_{k+1}]$ in $\tilde{C}(N,k+1)$, the \emph{k-packet generated by X} is defined as 
	\begin{equation*}
		P_X \coloneqq \{[\widehat{x}_i] \mid 1 \leq i \leq k+1\}
	\end{equation*}
	The $k$-packet $P_X$ is ordered lexicographically by $[\widehat{x}_i] < [\widehat{x}_j]$ whenever $i > j$. 
\end{definition}
The definition of the lexicographic order of $P_X$ in Definition \ref{DGH:def:affPack} is independent of the chosen representative. It is invariant under translations. 

\begin{definition}\label{DGH:def:affReal}
	A finite subset $J \subset \tilde{C}(N,k)\setminus Y_k$ is realizable if its intersection with every $k$-packet is either a prefix or suffix of the $k$-packet. We define the following $(k+1)$-sets for any realizable $k$-set:
	\begin{enumerate}
		\item $J_p \coloneqq \{X \in \tilde{C}(N,k+1) \mid P_X \cap J \text{ is a proper prefix of }P_X\}$
		\item $J_s \coloneqq \{X \in \tilde{C}(N,k+1) \mid P_X \cap J \text{ is a proper suffix of }P_X\}$
		\item $J_F \coloneqq \{X \in \tilde{C}(N,k+1) \mid P_X \cap J \text{ is full in }P_X\}$
		\item $J_{\emptyset} \coloneqq \{X \in \tilde{C}(N,k+1) \mid P_X \cap J \text{ is empty in }P_X\}$
	\end{enumerate}
\end{definition}

Note that $\tilde{C}(N,k)\setminus Y_k$ is \emph{not} a realizable $k$-set, due to the existence of packets like $P_{[134]}$. 

\begin{definition}\label{DGH:def:affadm}
	A total order $\rho$ on a realizable $k$-set $J$ is \emph{admissible} if it induces
	\begin{enumerate}
		\item ... the antilexicographic order on $P_X$ when $X \in J_s$,
		\item ... the lexicographic order on $P_X$ when $X \in J_p$,
		\item ... either the lexicographic or antilexicographic order on $P_X$ when $X \in J_F$.
	\end{enumerate}
\end{definition}

Packet flips and elementary equivalences still make sense and are defined as they were in the non-affine case. 

Numerous examples suggest that, up to elementary equivalence, the collection of admissible orders on $\Inv(\tilde{w})$ have a well-defined source and sink, as do paths between paths etc. In particular, the unique minimal $J$-order appears to have $J_s$ as its inversion set, while the unique maximal $J$-order has $J_s \cup J_F$ as its inversion set. 

There are some obstructions to directly copying the finite case proofs over to the affine case. Unfortunately, Ziegler's correspondence between the single step inclusion order and Manin-Schechtman order is no longer available in the affine case. In other words, if two inversion sets differ by a single element $X$, it is no longer clear that you can flip $P_X$ (see Remark \ref{DGH:rmk:Ziegthm}). Even if Ziegler's bijection could be established, the notion of ``extending orders to maximal chains'', so central to our proof, is no longer applicable in the affine case. Another approach is to replicate Manin and Schechtman's argument using ``good orders.'' But again, the author is unsure as to how to adapt this to the affine case. A third approach is based upon using the results of this paper.

Unlike $S_n$, $\tilde{A}_{N}$ has no longest word. A path in the weak Bruhat graph of $\tilde{A}_{N}$ from the identity element $e$ to $\tilde{w}$ is therefore somewhat similar to a path in the weak Bruhat graph of $S_n$ from $e$ to $w$. If it were somehow possible to fully encode $\Inv(\tilde{w})$ as the inversion set of some $w \in S_{n'}$ for $n' > N$, then it might be possible to use Theorem \ref{DGH:thm:DGHBruhat} to prove that the second Bruhat order of $\tilde{w}$ has a unique minimal and maximal element. Even higher Bruhat orders could then be defined based on the higher Bruhat orders of $w$. We leave this for future research efforts!

\appendix

\section{Generating Higher Bruhat Orders}\label{DGH:Sec:App}

This appendix is devoted to providing and explaining a simple algorithm for generating higher Bruhat orders. At its core is a greedy algorithm for determining which packets are flippable in a given admissible $k$-order. Once you can determine which packets are flippable in a given admissible $k$-order you can generate the entire higher Bruhat order by running the algorithm on $\rho_{\lex}$, as well as the generated admissible $k$-orders.

The algorithm consists of a main routine and three subroutines. Pseudocode for the main routine is found in Algorithm \ref{DGH:alg:Flip}. This portion of the algorithm checks every $k$-packet $P$ to see if $P$ is in the lexicographic order and if it can be brought together by a sequence of elementary equivalences. If $P$ can be brought together, then we store the equivalent admissible $k$-order in which $P$ forms a continuous block. This new admissible $k$-order is then used to check if any future $k$-packets can be brought together. 

\begin{algorithm}
	\caption{Finding Flippable Packets}\label{DGH:alg:Flip}
	\begin{algorithmic}[1]
		\Procedure{findFlip}{admOrder, packets} \Comment{}
		\State \textbf{array} newOrders
		\For{p in packets}
		\State	indices = [index of each entry in p]
		\State slice = admOrder[indices[0]:indices[last]] \Comment{minimal chain containing p}
			\If{sorted(indices) equals indices} \Comment{check that p is in lex}
			\State	together, subexp = comeTogether(slice, p)
				\If{together is True}
				\State	flip(subexp, p) \Comment{reverse the order on p}
				\State	newOrder = admOrder
				\State	newOrder[indices[0]:indices[last]] = subexp 
				\State	newOrders.append(neworder)
				\EndIf
			\EndIf
		\EndFor	
		\EndProcedure
	\end{algorithmic}
\end{algorithm}

Algorithm \ref{DGH:alg:Flip} calls Algorithm \ref{DGH:alg:Come} every time it checks if a $k$-packet $P$ can be brought together. The parameters of Algorithm \ref{DGH:alg:Come} are $P$ as well as the minimal chain $\overline{P}$ containing $P$ in your admissible order. It is assumed that some prefix of $P_X$ is also a prefix of $\overline{P}$. Algorithm \ref{DGH:alg:Come} works by checking if the elements $B \coloneqq \{x \in \overline{P} \mid x \notin P\}$ can be commuted to the beginning or end of $\overline{P}$. By initially iterating over $B$ lexicographically, this reduces to checking if each entry $x \in B$ can be commuted down past $L_x \coloneqq \{y \in P \mid y < x\}$ or, if that is not possible, up past all the remaining elements of $\overline{P}$. In order to simplify matters, every time $x$ is commuted down past $L_x$ we update our order so that $x < y$ for all $y \in L_x$. 

\begin{algorithm}
	\caption{Check if an element commutes up past a prefix}\label{DGH:alg:moveDown}
	\begin{algorithmic}[1]
		\Procedure{moveDown}{prefix,elt}
		\For{x in prefix}
		\If{x and elt share a packet}
		\State \textbf{return False}
		\EndIf
		\EndFor
		\State \textbf{return True}
		\EndProcedure
	\end{algorithmic}
\end{algorithm}

Because we update our order as we go, it is very easy to check if $x$ commutes past $L_x$. You only need to check that $x$ is not in any shared packets with the entries of $L_x$. This is accomplished by Algorithm \ref{DGH:alg:moveDown}. It is harder to check if $x$ commutes up. We cannot just check if $x$ commutes past $G_x \coloneqq \{y \in P \mid x < y\}$. This is because $x$ may be blocked by (i.e. may not commute with) an element $x' \in B$ or an element of $G_x$. Of course, if $x$ is blocked by an element in $G_x$, then $P$ cannot be brought together. However, if $x$ is blocked by $x' \in B$, then we may be able to commute $x'$ up before commuting $x$ up. This check is accomplished by Algorithm \ref{DGH:alg:bubble}.

\begin{algorithm}
	\caption{Checking if a packet can be brought together}\label{DGH:alg:Come}
	\begin{algorithmic}[1]
		\Procedure{comeTogether}{exp, packet}
			\State pointer = 0 
			\State curPos = 0
			\State endPos = $\text{len(exp)} - 1$
		
			\While{True}
				\If{$\text{endPos} - (\text{curPos}-\text{pointer}+1) == \text{len(packet)}$}
					\State	\textbf{return} \textbf{True}, exp, endPos \Comment{packet brought together}
				\EndIf
				\State nextPos = curPos + 1
				\State nextElt = exp[nextPos]
				\If{nextElt in packet}
					\State pointer += 1
					\State curPos +=1 
					\State \textbf{continue}
				
					\Else
						\If{\textbf{moveDown}(packet[0:pointer+1],nextElt)}\State \Comment{Blocking elt moved down}
							\State \textbf{move} nextElt up past exp[curPos-pointer:curPos]
							\State curPos += 1
						\Else
							\State slice = exp[nextPos, endPos+1]
							\State bubbled, subexp = \textbf{bubbleUp}(nextElt, slice, packet)
							\If{bubbled is \textbf{True}} \Comment{Blocking elt bubbled up}
								\State exp[slice] = subexp
								\State endPos = exp.index(packet[-1])
							\Else	
								\State \textbf{return False}, exp, endPos
							\EndIf
						\EndIf
				\EndIf
			\EndWhile
		\EndProcedure
	\end{algorithmic}
\end{algorithm}

Algorithm \ref{DGH:alg:bubble} takes as inputs $x$, a suffix of $\overline{P}$(as subexp), and $P$. If Algorithm \ref{DGH:alg:bubble} is successful, then it returns a modification of subexp with all the required commutations to put $x$ below $G_x$. Algorithm \ref{DGH:alg:bubble} begins by commuting $x$ as far up as is possible. This proceeds so long as $x$ is not in a shared packet with the neighbor below it. If $x$ is blocked by a neighbor in a shared packet, then Algorithm \ref{DGH:alg:bubble} calls itself to try to bubble up the blocking neighbor. If this is successful, then we continue trying to bubble up $x$. Otherwise, we return false indicating the failure to bubble up $x$. If $x$ commutes up past the rest of packet, then Algorithm \ref{DGH:alg:bubble} returns true along with a modified subexpression. This subexpression is a modification of the original input string which contains all of the commutatation moves performed by the algorithm. We expect some elements of $B$ to be commuted down past $P_X$, while the initial $x$ and, possibly, other members of $B$ are commuted up past $P_X$. These elements can now be ignored as they need not commute with future elements in order to bring $P_X$ together. Algorithm \ref{DGH:alg:Come} keeps track of this information, the index of the final entry of $P_X$ within the expression, with the pointer endPos. Likewise, the index of the first entry of $P_X$ within the expression is given by $\text{curPos} - \text{pointer}$.

\begin{algorithm}
	\caption{Try to bubble up a blocking element}\label{DGH:alg:bubble}
	\begin{algorithmic}[1]
		\Procedure{bubbleUp}{headElt, subexp, packet}
		\State curPos = 0
		\State endPos = len(subexp) - 1
		\State endElt = packet[-1]
		\While{$curPos < endPos$}
			\State nextPos = curPos + 1
			\State nextElt subexp[nextPos]
			
			\If{headElt and nextElt don't share a packet}
				\State subexp = flip(headElt, nextElt)
				\State curPos += 1
				\If{nextElt is endElt}
					\State endPos -= 1
				\EndIf
			\Else
				\If{nextElt in packet}
					\State \textbf{return False}, subexp
				\Else	
					\State bubbled, subsubexp = \textbf{bubbleUp}()
					\State slice = [nextPos:endPos+1]
					\State subexp[slice] = subsubexp
					\If{bubbled is \textbf{True}}
						\State endPos -= 1
					\Else
						\State \textbf{return False}, subexp
					\EndIf
				\EndIf
			\EndIf	
		\EndWhile
		\State \textbf{return True}, subexp
		\EndProcedure
	\end{algorithmic}
\end{algorithm}

\begin{lemma}\label{DGH:lem:BubbleWorks}
	Given a total order $\rho$ on a collection of $k$-sets $X$, Algorithm \ref{DGH:alg:bubble} will successfully identify if there exists a sequence of elementary equivalences moving $x_{\min} \coloneqq \min(\rho)$ above $x_{\max} \coloneqq \max(\rho)$.
\end{lemma}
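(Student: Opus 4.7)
The plan is to verify correctness of Algorithm \ref{DGH:alg:bubble} by proving the two directions of the biconditional separately: \emph{soundness} (the algorithm returns \textbf{True} only if such a sequence of elementary equivalences exists) and \emph{completeness} (the algorithm returns \textbf{True} whenever such a sequence exists). Together these yield the claim that the algorithm successfully identifies the existence of the desired sequence.

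Soundness is immediate from direct inspection. Every swap executed by the algorithm is guarded by the test that the two adjacent elements fail to share a $k$-packet, and any such swap is by definition an elementary equivalence. Hence the concatenation of swaps along any accepting computation path furnishes an explicit sequence of elementary equivalences. The loop invariants keep $x_{\min}$ (the initial $\mathrm{headElt}$) moving upward and $\mathrm{endPos}$ tracking the current position of $x_{\max}$, so when the algorithm returns \textbf{True} the element $x_{\min}$ has indeed been transported above $x_{\max}$.

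For completeness I would proceed by strong induction on $|\mathrm{subexp}|$. Assume a sequence $S$ of elementary equivalences moves $x_{\min}$ above $x_{\max}$. The key tool is a normalization lemma: independent swaps commute past one another, and any consecutive pair of swaps that first moves $x_{\min}$ downward and then back upward can be cancelled. Applying these moves exhaustively, one can reorder $S$ so that $x_{\min}$ moves strictly upward, one position at a time. Now let $y$ be the first element $x_{\min}$ crosses in the normalized $S$. If $\{x_{\min}, y\}$ do not share a $k$-packet, the algorithm performs exactly this commutation and we close by induction on the shorter tail. If they do share a $k$-packet, then in $S$ the element $y$ must first be moved above $x_{\max}$ before $x_{\min}$ can advance; the corresponding portion of $S$ witnesses that $y$ can be bubbled up through a strictly shorter suffix, so by induction the recursive call $\mathbf{bubbleUp}(y,\ldots)$ returns \textbf{True} and the main loop resumes.

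The principal obstacle is matching the algorithm's explicit failure modes to genuine combinatorial obstructions. Algorithm \ref{DGH:alg:bubble} returns \textbf{False} exactly when either (i) $x_{\min}$ encounters a blocker $y\in\mathrm{packet}$ with which it shares a $k$-packet, or (ii) a recursive invocation returns \textbf{False}. In case (i) any hypothetical sequence would be forced to permute two elements of a common $k$-packet, contradicting the definition of elementary equivalence. Case (ii) will follow from the inductive hypothesis, since the recursive failure certifies that $y$ cannot be lifted above $x_{\max}$, and any normalized sequence that moves $x_{\min}$ past $y$ must perform precisely such a lift. The technical heart of the argument is making the normalization lemma precise; everything else is routine bookkeeping about the interaction between the recursion and the normalized sequence.
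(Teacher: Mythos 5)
Your proposal follows essentially the same inductive skeleton as the paper's proof, but it is dressed differently: you split the argument into soundness and completeness and you propose a normalization lemma (reorder a witnessing sequence $S$ so that $x_{\min}$ moves monotonically upward) as the technical heart of the completeness direction. The paper does not normalize $S$ at all; it inducts directly on $\vert X\vert$ and uses one sharp observation to avoid normalization entirely: if $x_{\min}$ and a blocker $y$ lie in a common $k$-packet then they can \emph{never} cross, so any sequence putting $x_{\min}$ above $x_{\max}$ must independently put $y$ above $x_{\max}$; the inner recursive call on the strictly smaller slice above $x_{\min}$ then settles the matter by induction, and success or failure there is exactly success or failure of the original instance. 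That observation does all the work your normalization lemma was meant to do, with less machinery. Your soundness paragraph is correct (every executed swap is guarded, so accepting runs yield honest sequences), though the paper glosses it; that is a genuine improvement in rigor, if a minor one.

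One concrete flaw in your completeness case analysis: you define $y$ as ``the first element $x_{\min}$ crosses in the normalized $S$'' and then split on whether $\{x_{\min},y\}$ share a $k$-packet, but if they share a packet then $x_{\min}$ never crosses $y$ at all, so the definition of $y$ and the second branch of your dichotomy are in tension. What you want is $y$ to be the element currently \emph{adjacent} to $x_{\min}$, not the first element crossed; with that change the dichotomy is well posed and the argument proceeds. You should also state the normalization lemma precisely before leaning on it: cancellation of a ``down-then-up'' pair applies only when the two swaps become adjacent, which requires commuting intermediate swaps past them, and not all of those are independent of $x_{\min}$. The paper avoids this entire issue, which is a real economy. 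Finally, both your proof and the paper's implicitly assume the greedy choice of crossing a non-blocking neighbor never forecloses a solution; neither spells this out, so you are no worse than the original there, but it is worth flagging as a shared informality.
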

\begin{proof}
	We induct on the size of $X$. When $\vert X\vert = 1$, there is nothing to do. More generally, it is evident that algorithm \ref{DGH:alg:bubble} works if $x_{\min}$ does not share a packet with any other element of $X$. So we may assume that there exists at least one $y \in X$ such that $x_{\min}$ and $y$ are in a common packet $P_Y$. Without loss of generality, we may assume that $x_{\min}$ and $y$ are adjacent under $\rho$. 
	
	Because $x_{\min}$ and $y$ are in a shared packet, it is not possible to move $y$ below $x_{\min}$. The only way to move $x_{\min}$ above $x_{\max}$ is by moving $y$ above $x_{\max}$ as well. By our induction hypothesis, Algorithm \ref{DGH:alg:bubble} will correctly accomplish this for $X\sdiff{x_{\min}}$. If we are unable to move $y$ up past $x_{\max}$, then we return false. If we are able to move $y$ up past $x_{\max}$, then we are left with running Algorithm 4 on $X\sdiff{y}$. As $\vert X\sdiff{y}\vert < \vert X\vert$ our induction hypothesis guarantees that our algorithm will work. 
\end{proof}

\begin{theorem}
	Given an admissible $k$ order $\rho$, algorithms (\ref{DGH:alg:Flip})-(\ref{DGH:alg:bubble}) will return 
	\begin{equation}\label{DGH:eqn:Nlex}
	N([\rho])_{\lex} \coloneqq \{X \in N([\rho]) \mid \rho\vert_{P_X} = \rho_{\lex}\vert_{P_X}\}.
	\end{equation}
\end{theorem}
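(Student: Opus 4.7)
The plan is to verify the algorithm in both directions: soundness, that every returned $X$ lies in $N([\rho])_{\lex}$, and completeness, that every $X \in N([\rho])_{\lex}$ is returned. Since Algorithm \ref{DGH:alg:Flip} processes each candidate packet independently, the analysis reduces to what \texttt{comeTogether} does for a single $(k+1)$-set $X$. Soundness is transparent from the code: the check \texttt{sorted(indices) equals indices} is exactly the assertion that $\rho\vert_{P_X} = \rho_{\lex}\vert_{P_X}$, and every edit performed inside \texttt{comeTogether}, \texttt{moveDown}, and \texttt{bubbleUp} is a swap of adjacent entries verified not to share a packet, that is, an elementary equivalence. Thus when the algorithm returns True with modified \texttt{subexp}, this is an order equivalent to $\rho$ in which $P_X$ is contiguous, so $X \in N([\rho])$.

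For completeness, fix $X \in N([\rho])_{\lex}$ and a representative $\rho' \sim \rho$ with $P_X$ contiguous. Because elementary equivalences never swap entries of a common packet, restriction to $P_X$ is preserved across the class, so $\rho'\vert_{P_X} = \rho\vert_{P_X} = \rho_{\lex}\vert_{P_X}$ and the lex test passes. Next I would show that the sequence of equivalences carrying $\rho$ to $\rho'$ may be assumed to act only within the slice (the minimal interval of $\rho$ containing every element of $P_X$): entries outside this interval can be kept fixed by replacing $\rho'$ with the representative of $[\rho']$ that agrees with $\rho$ on those positions, using that no swap in a valid sequence needs to push an outside entry across a slice endpoint without having to push it back out eventually. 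This reduces completeness to showing that \texttt{comeTogether}, running on the slice, succeeds whenever $P_X$ can be made contiguous within it.

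The main obstacle is to verify that the greedy strategy of \texttt{comeTogether} never fails spuriously. My plan is induction on the size of $B$, the set of non-$P_X$ entries in the slice, scanning them in order of appearance. When an entry $x \in B$ is reached, \texttt{moveDown} tests whether $x$ shares a packet with any already-placed element of $P_X$. If not, commuting $x$ past that placed prefix is a valid elementary equivalence; the crucial observation is that after the commutation $x$ sits below the remaining unprocessed slice and hence can no longer obstruct any future \texttt{bubbleUp}, since subsequent bubblings move entries \emph{upward} past elements above them. Thus if some equivalent slice brings $P_X$ together at all, one still exists after the greedy down-move, and the inductive hypothesis finishes the case. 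When \texttt{moveDown} fails, $x$ shares a packet with an element of $P_X$ lying below $x$ in $\rho$, and these two cannot be reordered in any equivalent slice; consequently $x$ must lie above the contiguous $P_X$ block in every successful $\rho'$. The algorithm then invokes \texttt{bubbleUp} on the suffix slice, and by Lemma \ref{DGH:lem:BubbleWorks} this returns True iff such an upward commutation actually exists. Hence a failure of \texttt{bubbleUp} corresponds to a genuine packet-sharing obstruction that also prevents any equivalent reordering from making $P_X$ contiguous. Combining the two cases, \texttt{comeTogether} returns True precisely when $X \in N([\rho])$, and together with the lex test this yields exactly $N([\rho])_{\lex}$.
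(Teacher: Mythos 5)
Your proof is correct and follows essentially the same strategy as the paper's: reduce to the behavior of \texttt{comeTogether} on the minimal chain $\overline{P_X}$, induct on the number of non-$P_X$ entries in that chain, argue that the greedy down-move via \texttt{moveDown} is always safe to commit to, and appeal to Lemma \ref{DGH:lem:BubbleWorks} for the bubble-up case. Your explicit soundness/completeness split and the remark about why a moved-down element can no longer obstruct later bubblings make a couple of steps more visible than the paper's terser version, but the inductive structure, the greedy invariant, and the reliance on the \texttt{bubbleUp} lemma are the same.
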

\begin{proof}
	Algorithm \ref{DGH:alg:Flip} is essentially a wrapper function iterating over each packet $P_X$. The ``meat'' of the proof is that Algorithms \ref{DGH:alg:Come} and \ref{DGH:alg:bubble} will correctly identify when $X \in N([\rho])_{\lex}$. 
	
	Let $\overline{P_X}$ be the minimal chain in $\rho$ which contains $P_X$. We will induct on $\vert \overline{P_X}\setminus P_X\vert$. If $\vert \overline{P_X}\setminus P_X\vert = 0$, then $P_X$ already forms a chain. Algorithm \ref{DGH:alg:Come} identifies this by incrementing ``pointer'' and ``'curPos'' until ``pointer'' reaches the end of $P_X$ and signals to return True. This proves the base case.
	
	By definition, $x_{\min} \coloneqq \min(\overline{P_X})$ and $x_{\max} \coloneqq \max(\overline{P_X})$ are both in $P_X$. Note that $X \in N([\rho])$ is equivalent to being able to, through a sequence of elementary equivalences, move every $x \in \overline{P_X}\setminus P_X$ down past $x_{\min}$ or up past $x_{\max}$. 
	
	A greedy way to accomplish this is to seek out the minimal element of $\overline{P_X}$ which is not in $P_X$. Call this element $y$. Algorithm \ref{DGH:alg:Come} first attempts to move $y$ down. If this is successful, then Algorithm \ref{DGH:alg:Come} has produced a new total order on $\overline{P_X}$ in which the minimal chain containing $P_X$ has one fewer element than before. The rest of Algorithm \ref{DGH:alg:Come} runs on this new minimal chain. Hence it is gauranteed to work by our inductive hypothesis.
	
	If $y$ cannot move down, then the only way to clear out $y$ is by bubbling it up. Algorithm \ref{DGH:alg:Come} calls Algorithm \ref{DGH:alg:bubble}, whose proof of behavior is contained in Lemma \ref{DGH:alg:bubble}. If Algorithm \ref{DGH:alg:bubble} returns True, then Algorithm \ref{DGH:alg:Come} proceeds, again focusing on a smaller minimal chain. Therefore our inductive hypothesis guarantees its correctness. If Algorithm \ref{DGH:alg:bubble} returns False, then Algorithm \ref{DGH:alg:Come} also returns False, indicating that $P_X$ cannot be brought together.
\end{proof}

\bibliography{BruhatBib}

\begin{thebibliography}{1}

\bibitem{DGH:BESoerg}
Ben Elias.
\newblock A diagrammatic category for generalized {B}ott-{S}amelson bimodules
  and a diagrammatic categorification of induced trivial modules for {H}ecke
  algebras.
\newblock {\em Proceedings of the London Mathematical Society}, page 924–978,
  2016.

\bibitem{DGH:BEDiam}
Ben Elias.
\newblock A {D}iamond lemma for {H}ecke-type algebras, Apr 2021.
\newblock https://arxiv.org/abs/1907.10571.

\bibitem{DGH:Koley}
Mitra Koley.
\newblock Thicker {S}oergel calculus for {B3}.
\newblock {\em Journal of Algebra and Its Applications}, 19(12):2050230, 2019.

\bibitem{DGH:LIB}
Nicolas Libedinsky.
\newblock New bases of some {H}ecke algebras via {S}oergel bimodules.
\newblock {\em Advances in Mathematics}, 228(2):1043--1067, 2011.

\bibitem{DGH:MS1989}
Yu.~I. Manin and V.~V. Schechtman.
\newblock Arrangements of {H}yperplanes, {H}igher {B}raid {G}roups and higher
  {B}ruhat orders.
\newblock {\em Algebraic Number Theory — in honor of K. Iwasawa}, page
  289–308, 1989.

\bibitem{DGH:SH2016}
Seth Shelley-Abrahamson and Suhas Vijaykumar.
\newblock Higher {B}ruhat orders in type {B}.
\newblock {\em The Electronic Journal of Combinatorics}, 23(3), 2016.

\bibitem{DGH:Z1993}
Günter~M. Ziegler.
\newblock Higher {B}ruhat orders and cyclic hyperplane arrangements.
\newblock {\em Topology}, 32(2):259–279, 1993.

\end{thebibliography}
\bibliographystyle{plain}

\end{document}